\numberwithin{equation}{section}
\numberwithin{figure}{section}
\newtheorem{theorem}{Theorem}[section]
\newtheorem*{theorem*}{Theorem}
\newtheorem{thma}{Theorem}
\newtheorem{lemma}[theorem]{Lemma}
\newtheorem*{proposition*}{Proposition}
\theoremstyle{definition}
\newtheorem{definition}[theorem]{Definition}
\newtheorem{hypothesis}[theorem]{Hypothesis}
\newtheorem{remark}[theorem]{Remark}
\DeclareMathOperator{\Reg}{Reg}
\newcommand{\Sing}{\mathrm{Sing}}
\newcommand{\CCC}{\mathcal{C}}
\newcommand{\BBB}{\mathcal{B}}
\newcommand{\eps}{\epsilon}
\newcommand{\NN}{\mathbb{N}}
\newcommand{\RR}{\mathbb{R}}
\newcommand{\mukbm}{\mu_{\mathrm{KBM}}}
\newcommand{\Per}{\mathrm{Per}}
\newcommand{\ul}{\underline}
\title[Fluctuations of time averages in non-positive curvature]{Fluctuations of time averages around
 closed geodesics in non-positive curvature}
\author{Daniel~J.~ Thompson and Tianyu Wang }
\date{\today}
\subjclass[2010]{37DA50, 37D40, 37D25}
\address{D.~J.~Thompson, Department of Mathematics, The Ohio State University, Columbus, OH 43210, \emph{E-mail address:} \tt{thompson@math.osu.edu}}
\address{T.~Wang, Department of Mathematics, The Ohio State University, Columbus, OH 43210, \emph{E-mail address:} \tt{wang.7828@buckeyemail.osu.edu}}
\begin{document}
\thanks{This work is partially supported by NSF grants DMS-$1461163$ and DMS-$1954463$.}
\maketitle

\begin{abstract}
We consider the geodesic flow for a rank one non-positive curvature closed manifold.  We prove an asymptotic version of the Central Limit Theorem for families of measures constructed from regular closed geodesics converging to the Bowen-Margulis-Knieper measure of maximal entropy.  The technique expands on ideas of Denker, Senti and Zhang, who proved this type of  asymptotic Lindeberg Central Limit Theorem on periodic orbits for expansive maps with the specification property. We extend these techniques from the uniform to the non-uniform setting, and from discrete-time to continuous-time. We consider H\"older observables subject only to the Lindeberg condition and a weak positive variance condition. If we assume a natural strengthened positive variance condition, the Lindeberg condition is always satisfied. Our results extend to dynamical arrays of H\"older observables, and to weighted periodic orbit measures which converge to a unique equilibrium state. \end{abstract}

\section{Introduction}

A goal in the study of dynamical systems with some hyperbolicity is to exhibit the kind of stochastic behavior obeyed by sequences of i.i.d. random variables. In settings with non-uniform hyperbolicity, we may be able to demonstrate this kind of stochastic behavior \emph{within} the system even in situations where it is intractable to demonstrate globally. Our paper follows this philosophy. We consider the geodesic flow for a rank one non-positive curvature closed manifold. We exhibit sequences of measures constructed from regular closed geodesics whose first order behavior is that of the measure of maximal entropy, and whose second order behavior obeys, in the limit, the Lindeberg Central Limit Theorem.

The Lindeberg condition is a classical criteria from Probability Theory, which often gives a necessary and sufficient criteria for the Central Limit Theorem (CLT) to hold for sequences of independent random variables which are not identically distributed. Roughly, the Lindeberg condition guarantees that the variance of a single random variable is negligible in comparison to the sum of all the variances. This idea was recently explored by Denker, Senti and Zhang \cite{DSZ18} in the setting of maps with the specification property. They showed that a Lindeberg condition on the sequence of periodic orbit measures is equivalent to a Central Limit Theorem in the limit. 

The analysis of this paper extends the ideas of Denker, Senti and Zhang to the geodesic flow on a rank one non-positive curvature closed manifold. This is one of the main classes of examples of non-uniformly hyperbolic flows. While the theory of equilibrium states in this setting has been extended recently by \cite{BCFT}, the statistical properties of these measures remain largely wide open, even for the Knieper-Bowen-Margulis measure of maximal entropy $\mukbm$. This contrasts with the well-understood case of geodesic flow on negative curvature manifolds, for which the CLT was established by Ratner \cite{mR73}. In particular, the CLT for the MME and other equilibrium states remains out of reach of current methods in the non-positive curvature setting. %We remark that low regularity of the unstable and stable leaves seems to be a major obstacle towards employing the powerful smooth techniques which are often used for this kind of analysis. 

In this paper, we show that for a H\"older observable, the time averages for certain measures constructed from regular closed geodesics asymptotically obey the Central Limit Theorem. This enriches the picture for these time averages, whose first order behavior is convergence to the integral with respect to the measure of maximal entropy. This result applies under the Lindeberg condition and a weak positive variance condition on the sequence of periodic orbit measures. This result is stated formally as Theorem \ref{main}.  We show that the Lindeberg condition is always satisfied under a natural strengthening of the positive variance condition. This is carried out in \S \ref{Verifylind}. We now build up some notation to state and motivate our results, and give an idea of the constructions involved.

Recall that for an invariant measure $\mu$, and an observable $f$, the dynamical variance for the flow $(g_t)$, when it exists, is defined by
\begin{equation} \label{dynvar}
\sigma^2_{\text{Dyn}}(f, \mu) = \lim_{T \to \infty} \int \left (\frac{F(\cdot, T) - \int F( \cdot, T )d \mu}{\sqrt{T}} \right)^2 d \mu,
\end{equation}
where $F(x, T) = \int_0^T f(g_sx) ds$. In our setting, for a fixed $\eta>0$, we construct a sequence of discrete probability measures $(m_l)$ on closed orbits in $T^1M$ corresponding to uniformly $\eta$-regular closed geodesics (which are defined in \S \ref{countingclosed}). We consider the collection of $\eta$-regular closed geodesics which have least period in the interval $(T_l- \delta_l, T_l]$, where $T_l \to \infty$ and $\delta_l \to 0$, which we denote $\Per^{\eta}_R(T_l - \delta_l, T_l]$. We define $m_l$ by choosing one point in $T^1M$ tangent to each such geodesic (we denote this set of points by $E_l$), and distributing mass equally over these points. By analogy with \eqref{dynvar}, it is natural for us to define the (lower)  dynamical variance for the sequence of measures $(m_l)$ to be
\[
 \underline \sigma^2_{\text{Dyn}}(f, (m_l)) = \liminf_{l \to \infty} \int \left (\frac{F(\cdot, T_l) - \int F( \cdot, T_l )d m_l}{\sqrt{T_l}} \right)^2 d m_l.
\]
We choose two more sequences $k_l \to \infty$, $C_l \to \infty$, and define another sequence of measures $(\nu_l)$. Each $\nu_l$ is given by constructing points out of the product $E_l^{k_l}$ by using a  certain specification property on the $\eta$-regular closed geodesics to find an orbit segment which loops $C_l$ times round each of the closed geodesics indexed by an element of  $E_l^{k_l}$. We write $S_l$ for the total length of an orbit segment specified in this way (precisely, $S_l= k_l(C_l T_l + M)$, where $M$ is the transition time in applying our specification property). The measure $\nu_l$ is given by putting mass equally along the initial segment of length $T_l$ of all the orbit segments defined this way. 

If the variance quantity $\underline \sigma^2_{\text{Dyn}}(f, (m_l)) $ is positive, we can choose $k_l$ and $C_l$ so that the family of measures $(\nu_l)$ satisfies an asymptotic central limit theorem for the observable $f$. We can state a simple version of our main results as follows.

\begin{thma} \label{intromain}
For any $\eta>0$ and sequences $\delta_l \to 0$, $T_l \to \infty$, we define a sequence of discrete probability measures $(m_l)_{l \in \NN}$ by choosing a point tangent to each element of $\Per^{\eta}_R(T_l - \delta_l, T_l]$, and assigning each of these points equal mass. We assume that $T_l$ is chosen to increase sufficiently fast, depending on $\eta$ and $\delta_l$, to allow for our construction of $(\nu_l)$ (see Hypothesis \ref{condstar}). Suppose $f\in C(T^1M)$ is H\"older continuous with
\begin{equation} \label{posvar0}
\begin{aligned}
 \underline \sigma^2_{\mathrm{Dyn}}(f, (m_l)) > 0.
\end{aligned}
\end{equation}
Then there exists sequences $k_l \to \infty$, $C_l \to \infty$, so that the sequence of measures $(\nu_l)$ defined by the data $(\delta_l, T_l, k_l, C_l)_{l\in \NN}$ (see \S \ref{construction} for details of the construction), which converges weak$^\ast$ to the measure of maximal entropy $\mukbm$, satisfies the following asymptotic central limit theorem. For all $a\in \mathbb{R}$, 
\begin{equation} \label{LCLT0}
\lim_{l\rightarrow \infty}\nu_l \left ( \left \{v: \frac{F(v, S_l) - S_l \int f d \nu_l}{\sigma_{\nu_l}(F( \cdot, S_l))}\leq a \right \} \right )=N(a),
\end{equation}
 where $N$ is the cumulative distribution function of the normal distribution $\mathcal{N}(0,1)$, and $\sigma^2_\mu(\phi)$ denotes the usual `static' variance $\sigma^2_\mu(\phi) = \int \left (\phi - \int \phi d\mu \right )^2 d\mu$.
\end{thma}
 The sequences $(k_l)$ and $(C_l)$ are determined by $\underline \sigma^2_{\text{Dyn}}(f, (m_l))$. Thus, given $\alpha>0$, we can find a sequence of measures $(\nu_l)$, defined by the data $(\delta_l, T_l, k_l, C_l)_{l\in \NN}$, so that any H\"older continuous observable $f$ with $\underline \sigma^2_{\text{Dyn}}(f, (m_l)) > \alpha$  satisfies \eqref{LCLT0}. We comment on this positive variance condition. If the manifold has strictly negative curvature, $(m_l)$ places mass on each closed periodic orbit whose length is in the interval $(T_l - \delta_l, T_l]$, and we expect that the variances for $(m_l)$ converge to the variance of the MME, along the lines of the basic argument in \cite[Theorem 1]{mPnotes}. Thus, in negative curvature, we expect that $ \underline \sigma^2_{\text{Dyn}}(f, (m_l)) =  \sigma^2_{\text{Dyn}}(f, \mukbm)$. In negative curvature, the variance $\sigma^2_{\text{Dyn}}(f, \mukbm)$ vanishes if and only if the observable is a coboundary \cite{PP90}.  It would be interesting to characterize the class of observables for which $\underline \sigma^2_{\text{Dyn}}(f, (m_l)) =0$ in the current context, although this will require some substantial new ideas and techniques.  Although rigorous analysis of this question is beyond the scope of this paper, by analogy with the negative curvature case, our intuition is that the positive variance condition \eqref{posvar0} should be the `typical' case.

Our result extends to arrays of observable functions,  and a large class of equilibrium states. %We focus on the MME case and a single observable function to make clear the main ideas of the argument. 
Furthermore, the arguments of this paper will apply for other classes of systems with enough hyperbolicity to yield some non-uniform specification properties. We do not attempt to formalize an abstract general statement, but we hope that our proof makes clear what the roadmap should be in other related settings. We discuss these generalizations in \S \ref{ES}.

The technique is an extension of Denker, Senti and Zhang (DSZ) \cite{DSZ18}. The idea is to build $\eps$-independent collections of regular closed periodic orbits whose growth rate is the topological entropy. Classical probability theory allows us to conclude that the Lindeberg CLT holds for certain uniform measures on parameter spaces associated to these collections. The analysis of the paper relies on using the specification property to propagate this result to measures with support in $T^1M$, modeled on closed geodesics. For the analysis to work, we must restrict to closed periodic orbits with some uniform regularity. For this, we use structure provided by the work of Burns, Climenhaga, Fisher, and the first named author \cite{BCFT}. To obtain the first order behavior of measures on these orbits, we need their growth rate to be comparable to the entropy, and that there is a unique measure of maximal entropy. The first point is provided by \cite{BCFT} and the second point was originally proved by Knieper \cite{gK98}. 

While we are indebted to DSZ for the strategy and philosophy of this paper, our analysis requires several novelties. In DSZ, the focus is on discrete-time dynamical systems with uniform specification. They establish the Lindeberg CLT in their general setting, but do not explore how to verify the Lindeberg condition in examples. The novelty in the current work is that we deal with with non-uniformity and continuous-time, we apply it to geodesic flow in non-positive curvature, and we verify the Lindeberg condition from a natural positive variance condition. To achieve this, there are significant technical differences. A key difference is that our construction involves looping round closed geodesics multiple times. The reason that this is necessary is because in the flow case, it is necessary to construct the measures using segments of orbit rather than point masses. We lose independence between adjacent orbit segments due to the types of averages we are forced to consider. The looping construction is designed to compensate for this loss of independence, which is key to the whole approach.  Looping brings new technical issues - notably, the small differences in periods of the closed geodesics in $\Per^{\eta}_R(T_l - \delta_l, T_l]$ add up. This is why we require $\delta_l \to 0$, and is one reason that the choice of constants in our construction is subtle. A by-product of our construction is that it easily generalizes to the case of equilibrium states, which was not clear in DSZ. 

The paper is structured as follows. In \S \ref{background}, we recall relevant background information. In \S \ref{construction}, we describe our construction of measures from closed geodesics. In \S \ref{s.main}, we state and prove our main results.  In \S \ref{Verifylind}, we show how to check the Lindeberg condition under a suitable positive variance condition. In \S \ref{ES}, we discuss various extensions of our main results.
\section{Background} \label{background}

\subsection{Preliminaries, entropy, and pressure}
We consider a continuous flow $(g_t)$ on a compact metric space $(X, d)$. For $\eps>0$ and $t>0$, and $x \in X$, we define the dynamical (Bowen) ball to be
\[
B_t(x ,\eps) = \{ y \in X : d(f_s x, f_s y) < \eps \text{ for all } 0 \leq s \leq t\}.
\]
 For a continuous function $f: X \to \RR$, we write
\[
F(x, t) = \int_0^t f(g_\tau x) d \tau.
\]
We also write
\[
F(x, [s, t]) = F(g_sx, t-s) = \int _s^t f(g_\tau x) d \tau
\]
We use analogous notation when we use other lower case letters for an observable. Thus, for example, for an observable $h$, we write $H(x, t) = \int_0^t h(g_\tau x) d \tau$. 

We consider collections of finite-length orbit segments $\CCC \subset X \times [0, \infty)$, where $(x,t)$ is identified with the orbit segment $\{g_s x : s \in [0, t)\}$. For $t>0$, we define $\CCC_t = \{x \in X, (x, t) \in \CCC \}$. We say $E \subset Z$ is $(t, \eps)$-separated for $Z$ if for all $x, y \in E$, $y \notin \overline {B_t (x, \eps)}$. 

For $\CCC \subset X \times [0, \infty)$, the entropy $h(\CCC, \eps)$ at scale $\eps$ is defined as
\[
h(\CCC, \eps) = \limsup_{t \to \infty} \frac{1}{t} \log \sup \{ \# E : E \subset \CCC_t \text{ is } (t, \eps)\text{-separated} \},
\]
and $h(\CCC) = \lim_{\eps \to 0} h(\CCC, \eps)$. For a set $Z$, we define $h(Z, \eps)$ as $h(\CCC_Z, \eps)$, where $\CCC_Z = \{ (x, t) : x \in Z, t\in [0,\infty) \}$. In particular, $h(X, \eps)$ reduces to the standard definition of topological entropy, see \cite{pW82}. The Variational Principle states that $h(X)$ is the supremum of the measure-theoretic entropies $h_\mu$ taken over flow-invariant probability measures. A measure achieving the supremum is called a measure of maximal entropy.

%Given a potential function $\varphi: X \to \RR$, we can also define the topological pressure of the space $P(X, \varphi)$, and the topological pressure of a collection of orbit segments $P(\CCC, \varphi)$. An equilibrium state for $\varphi$ is a flow-invariant probability measure $\nu$ which satisfies $P(X, \varphi) = h_\nu + \int \varphi d \nu$. Since in this paper we mostly focus on the MME case, we do not state the definitions here, but refer to \cite{pW82} for the classic reference, and \cite{BCFT} for a recent presentation which uses our notations.

\subsection{Central Limit Theorem}
The Central Limit Theorem in dynamics describes the second order behavior of the sequence of ergodic sums/integrals. The classical CLT for a continuous flow equipped with an ergodic measure $\mu$ says that for a H\"older observable $f$ with $\int f d \mu =0$, the sequence $\frac{1}{\sqrt t} F( \cdot, t)$ converges in distribution to the normal distribution. This result was proved for hyperbolic flows by Ratner \cite{mR73}, and strengthened by Denker and Phillip in \cite{DP84}. See also Parry and Pollicott \cite{PP90}.

The classical Central Limit Theorem can be obtained using a variety of techniques. We do not attempt to survey the literature here, but we recommend recent papers by \cite{DDKN20, BM18, DSZ18, jC15, MT, SG15} for an excellent paper trail. One might expect the classical CLT to hold in the setting of this paper, but none of these proof techniques are currently known to apply. We also mention an interesting recent related result - an asymptotic central limit theorem for lengths of closed geodesics in hyperbolic surfaces was recently proved by Gekhtman, Taylor and Tiozzo \cite{GTT}.

Our result is based on the Lindeberg CLT, which is one of the most famous generalizations of the classical CLT. We recall its statement in its original context of a sequence of independent random variables. First we define the Lindeberg function for a probability measure $\nu$ and an observable $h$, and a constant $c \geq 0$.
\begin{definition} \label{Lind}
Let $Z(c) = Z(c, h, \nu) = \{x : |h - \int h d \nu| > c\}$. The \emph{Lindeberg} function is
\[
L_{\nu}(h,c):=\int (h-\int h d \nu)^2\mathbb{1}_{Z(c)}(v)d\nu(v)
\]
\end{definition}
 Recall that for a probability measure $\nu$ on a space $\Omega$ and a function $f: \Omega \to \RR$, the variance $\sigma_\nu(f)$ is defined by 
\begin{equation} \label{variance}
\sigma^2_\nu(f) = \int \left (f - \int f d\nu \right )^2 d\nu = \int f^2 d \nu - \left (\int f d \nu \right)^2.
\end{equation}

\begin{theorem}[Lindeberg CLT for independent random variables]
Let $(\Omega, \nu)$ be a probability space and let $(X_i)_{i=1}^\infty$ be an independent sequence of random variables. Let $\sigma_i$ be the variance of $X_i$, and let $s_n^2 = \sum_{i=1}^n\sigma_i^2$. Suppose that for every $\eps>0$
\begin{equation} \label{originalLind}
\lim_{n \to \infty} \frac{1}{s_n^2}\sum_{i=1}^n L_{\nu}(X_i, \eps s_n) = 0.
\end{equation}
Then $\frac{1}{s_n} \sum_{i=1}^{n}(X_i - \int X_i d \mu)$ converges in distribution to the standard normal distribution.
\end{theorem}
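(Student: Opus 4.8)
The plan is to give the classical characteristic-function proof, since the Lindeberg CLT concerns genuinely independent but non-identically distributed random variables, for which the martingale and spectral methods specific to dynamical systems do not apply. First I would reduce to the centered case: replacing each $X_i$ by $X_i - \int X_i\, d\nu$ changes neither the variances $\sigma_i^2$, the normalization $s_n^2$, the Lindeberg functions $L_\nu(X_i, c)$, nor the law of $\frac{1}{s_n}\sum_{i=1}^n(X_i - \int X_i\, d\nu)$, so we may assume $\int X_i\, d\nu = 0$ for all $i$. Writing $\phi_{n,i}(t) = \int e^{itX_i/s_n}\, d\nu$, independence gives that the characteristic function of $\frac{1}{s_n}\sum_{i=1}^n X_i$ is $\prod_{i=1}^n \phi_{n,i}(t)$, and by L\'evy's continuity theorem it suffices to prove $\prod_{i=1}^n \phi_{n,i}(t) \to e^{-t^2/2}$ for each fixed $t \in \mathbb{R}$.

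The first step is to extract from the Lindeberg hypothesis the \emph{uniform asymptotic negligibility} $\max_{1 \le i \le n} \sigma_i^2/s_n^2 \to 0$. This comes from the split $\sigma_i^2 = \int X_i^2\, \mathbb{1}_{\{|X_i| \le \epsilon s_n\}}\, d\nu + L_\nu(X_i, \epsilon s_n)$, bounding the first term by $\epsilon^2 s_n^2$ and the second by $\sum_{j=1}^n L_\nu(X_j, \epsilon s_n)$, so that $\sigma_i^2/s_n^2 \le \epsilon^2 + o(1)$ uniformly in $i$; letting $\epsilon \downarrow 0$ gives the claim.

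The analytic core is the elementary bound $|e^{ix} - 1 - ix + \tfrac{x^2}{2}| \le \min\{|x|^3/6,\, x^2\}$. Applying it with $x = tX_i/s_n$ and splitting the integral over $\{|X_i| \le \epsilon s_n\}$ and its complement yields
\[
\Bigl| \phi_{n,i}(t) - \bigl(1 - \tfrac{t^2\sigma_i^2}{2s_n^2}\bigr) \Bigr| \le \frac{|t|^3\epsilon}{6}\cdot\frac{\sigma_i^2}{s_n^2} + t^2\cdot\frac{L_\nu(X_i,\epsilon s_n)}{s_n^2}.
\]
Summing over $i$, using $\sum_{i=1}^n \sigma_i^2 = s_n^2$ and the Lindeberg hypothesis, gives $\sum_{i=1}^n\bigl|\phi_{n,i}(t) - (1 - \tfrac{t^2\sigma_i^2}{2s_n^2})\bigr| \le \tfrac{|t|^3\epsilon}{6} + o(1)$, which can be made arbitrarily small. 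I would then invoke the product-comparison lemma: if $|z_i|,|w_i|\le 1$ then $|\prod_i z_i - \prod_i w_i| \le \sum_i |z_i - w_i|$ --- here $|\phi_{n,i}(t)|\le 1$ automatically and $0 \le 1 - \tfrac{t^2\sigma_i^2}{2s_n^2} \le 1$ for $n$ large by negligibility --- to replace $\prod_i \phi_{n,i}(t)$ by $\prod_i(1 - \tfrac{t^2\sigma_i^2}{2s_n^2})$ with vanishing error. Finally, negligibility together with $|\log(1-u)+u| \le u^2$ for small $u$ and $\sum_i \tfrac{t^2\sigma_i^2}{2s_n^2} = \tfrac{t^2}{2}$ shows $\prod_i(1 - \tfrac{t^2\sigma_i^2}{2s_n^2}) \to e^{-t^2/2}$, finishing the proof.

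I do not anticipate a genuine obstacle, as this is a textbook theorem; the only delicate points are the bookkeeping in the two-regime split, so that the Lindeberg function appears precisely where it is needed, and verifying negligibility, which legitimizes both the product-comparison lemma and the logarithmic expansion. An alternative worth mentioning but not pursuing is Lindeberg's original swapping argument, replacing the $X_i$ one at a time by matching Gaussians and controlling the third-order remainder by the same two-regime estimate; it avoids characteristic functions at the cost of working with $C^3$ test functions and an extra smoothing step to pass from expectations of smooth functions to the distribution function.
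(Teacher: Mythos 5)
The paper does not prove this statement: it is the classical Lindeberg--Feller CLT, cited to Feller's book and used as a black box (via \cite[Proposition 3.3]{DSZ18}) to establish Theorem \ref{mainonmu}. Note also that the displayed condition \eqref{originalLind} in the paper contains typos --- it should read $\lim_{n\to\infty}\frac{1}{s_n^2}\sum_{i=1}^{n}L_\nu(X_i,\epsilon s_n)=0$, and the centering should be against $\nu$ rather than $\mu$ --- and your proof correctly works with the intended statement.

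Your argument is the standard characteristic-function proof and is correct. The reduction to the centered case is clean; the derivation of uniform asymptotic negligibility $\max_i\sigma_i^2/s_n^2\to 0$ from the Lindeberg condition via the split $\sigma_i^2=\int X_i^2\mathbb{1}_{\{|X_i|\le\epsilon s_n\}}\,d\nu+L_\nu(X_i,\epsilon s_n)$ is exactly right; the two-regime estimate using $|e^{ix}-1-ix+\tfrac{x^2}{2}|\le\min\{|x|^3/6,\,x^2\}$ correctly produces the $\tfrac{|t|^3\epsilon}{6}\cdot\tfrac{\sigma_i^2}{s_n^2}$ and $t^2\cdot\tfrac{L_\nu(X_i,\epsilon s_n)}{s_n^2}$ terms; and the passage from $\prod_i\phi_{n,i}(t)$ to $\prod_i\bigl(1-\tfrac{t^2\sigma_i^2}{2s_n^2}\bigr)$ to $e^{-t^2/2}$ is legitimized, as you observe, by negligibility (which makes each factor lie in $[0,1]$ for large $n$ and controls the quadratic error in the logarithm). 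There is no gap. Since the paper never proves this result, there is no route to compare against; your closing remark about Lindeberg's original swapping argument is an accurate description of the main alternative.
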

The hypothesis \eqref{originalLind} is  called the Lindeberg condition, see e.g. \cite{wF68}. We will formulate our results  using a dynamical version of the Lindeberg condition on periodic orbits, following Denker, Senti and Zhang \cite{DSZ18}.

\subsection{Geometry and dynamics of the geodesic flow} We recall the necessary background from \cite{BCFT} on geodesic flow for non-positive curvature manifolds. The arguments in this paper use the dynamical structure obtained there, rather than direct geometric arguments. We refer to \cite{wB95,pE99} for general geometric background.

We consider a compact, connected, boundaryless smooth manifold $M$ equipped with a smooth Riemannian metric $g$, with non-positive sectional curvatures at every point. For each $v$ in the unit tangent bundle $T^1M$ there is a unique constant speed
geodesic denoted $\gamma_v$ such that $\dot{\gamma}_v(0)=v$.  The \emph{geodesic flow} $(g_t)_{t\in\mathbb{R}}$ acts on $T^1M$  by $g_t(v)=(\dot\gamma_v)(t)$.  We equip $T^1M$ with a metric $d$ given by
\begin{equation}\label{eqn:dK}
d(v,w) = \max \{d_M(\gamma_v(t), \gamma_w(t)) \mid t \in [0,1] \},
\end{equation}
where $d_M$ is the Riemannian distance on $M$.  The flow is entropy expansive, which implies that for sufficiently small $\epsilon$, $h(X)=h(X, \eps)$. We call such a scale an \emph{expansivity constant}. Any positive $\epsilon$ which is less than one third of the injectivity radius of $M$ is an expansivity constant.

Given $v\in T^1M$, stable and unstable horospheres $H^s_v$ and $H^u_v$ can be defined locally using Jacobi fields or using a standard geometric construction in the universal cover. The horospheres are $C^2$ manifolds.  The (strong) stable and unstable manifolds $W^s_v, W^u_v$ are defined as normal vector fields to $H^s_v, H^u_v$, and we can define  the stable and unstable subspaces $E^s_v, E^u_v \subset T_vT^1M$ to be the tangent spaces of $W^s_v, W^u_v$ respectively. The weak stable manifold $W^{cs}_v$ is defined in the obvious way so that its tangent space is $E^s_v \oplus E^0_v$, where $E^0_v$ is the space given by the flow direction. The bundles $E^s, E^u$ are invariant, and depend continuously on $v$, see \cite{pE99, GW99}.

We define the \emph{singular set} $\Sing$ as the set of $v \in T^1M$ so that the geodesic determined by $v$ has a parallel orthogonal Jacobi field, and $\Reg$ to be the complement of $\Sing$.  We say that $M$ is rank one if $\Reg \neq \emptyset$. The Jacobi field formalism is used extensively in \cite{BCFT}, and we refer there for full definitons.

A key piece of geometric data which is at the heart of our analysis is a continuous function $\lambda\colon T^1M\to [0,\infty)$ defined in \cite{BCFT}. Roughly, $\lambda(v)$ is the smallest normal curvature at $v$ (with sign chosen to be non-negative) of the stable and unstable horospheres centered at $v$. If $\lambda(v)>0$, then $v \in \Reg$. We refer to \cite{BCFT} for the precise definition and more geometric context. Let $\Reg(\eta) = \{ v : \lambda(v) \geq \eta\}$. If $v \in \Reg(\eta)$, then we have various uniform estimates at the point $v$, for example on how distance scales in the local stable and unstable manifolds. These are the properties exploited in this paper. We recall the precise statement obtained on local product structure.

\begin{lemma} \cite[Lemma 4.4]{BCFT}
For every $\eta>0$, there exist $\delta>0$ and $\kappa\geq 1$ such that at every $v\in \Reg(\eta)$, the foliations $W^u, W^{cs}$ have local product structure with constant $\kappa$ in a $\delta$-neighborhood of $v$. That is,  for every $\eps\in (0,\delta]$ and all $w_1,w_2 \in B(v,\eps)$, the intersection $W_{\kappa \eps}^u(w_1) \cap W_{\kappa \eps}^{cs}(w_2)$ contains a single point, which we denote by $[w_1,w_2]$, and
\begin{align*}
d^u(w_1, [w_1,w_2]) &\leq \kappa d(w_1,w_2), \\
d^{cs}(w_2, [w_1,w_2]) &\leq \kappa d(w_1,w_2).
\end{align*}
\end{lemma}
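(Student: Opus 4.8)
The plan is to prove the lemma in three stages, of which only the first uses the specific geometry of non-positive curvature; the other two are soft. The three ingredients are: (i) a lower bound $\alpha=\alpha(\eta)>0$ on the angle $\angle(E^u_v,E^{cs}_v)$, uniform over $v\in\Reg(\eta)$; (ii) uniform $C^1$ control, of a size and Lipschitz constant depending only on the curvature bounds of $M$, on the local leaves of $W^u$ and $W^{cs}$; and (iii) a graph-transform argument producing the intersection point $[w_1,w_2]$ together with the two displacement estimates. The hard part is (i): tracking how the pointwise hypothesis $\lambda(v)\ge\eta$ on the curvature of the horospheres turns into a quantitative transversality bound between the relevant distributions.

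For (i), I would use the Jacobi field model of $T_vT^1M$, in which an orthogonal perturbation of $v$ is encoded by the initial data $(w,w')$ of an orthogonal Jacobi field along $\gamma_v$; there $E^s_v=\{(w,U^s_vw)\}$ and $E^u_v=\{(w,U^u_vw)\}$ are the graphs of the stable and unstable solutions $U^s_v\le 0\le U^u_v$ of the matrix Riccati equation, whose eigenvalues are, up to sign, the normal curvatures of the stable and unstable horospheres at $v$, while $E^0_v$ is the flow direction, uniformly transverse to the horospherical directions for algebraic reasons. Compactness of $M$ bounds $\|U^s_v\|$ and $\|U^u_v\|$ above, and the hypothesis $\lambda(v)\ge\eta$ is exactly what controls the relevant eigenvalues of $U^s_v$ and $U^u_v$ from below, giving $U^u_v-U^s_v\ge 2\eta\,\Id$, hence $\|(U^u_v-U^s_v)^{-1}\|\le(2\eta)^{-1}$; combined with the upper bounds this is a lower bound on the angle between the graphs $E^s_v,E^u_v$, and hence on $\angle(E^u_v,E^{cs}_v)$. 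Since $E^u$ and $E^{cs}$ are continuous on $T^1M$ and $\Reg(\eta)$ is compact, this yields a uniform bound $\alpha=\alpha(\eta)>0$; and by uniform continuity of these bundles we may then fix $\delta=\delta(\eta)>0$ so small that for every $v\in\Reg(\eta)$ and all $w_1,w_2\in B(v,\delta)$ one still has $\angle(E^u_{w_1},E^{cs}_{w_2})\ge\alpha/2$. This is the point at which $w_1,w_2$ are allowed to fall outside $\Reg(\eta)$: transversality is assumed only at $v$ and propagated to nearby points by continuity.

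Ingredient (ii) holds because the local unstable leaf $W^u_{\mathrm{loc}}(w)$ is, in suitable coordinates, the graph over $E^u_w$ of a $C^1$ map whose derivative is governed by the second fundamental form of the unstable horosphere, and that form is uniformly bounded since $M$ is compact; the same applies to $W^{cs}$. For (iii), given $v\in\Reg(\eta)$ and $\eps\le\delta$, I would set up the intersection $W^u_{\kappa\eps}(w_1)\cap W^{cs}_{\kappa\eps}(w_2)$ as a fixed point problem: in exponential coordinates at $v$, represent each leaf as a graph over its tangent space and use the angle bound $\ge\alpha/2$ from (i) together with the small Lipschitz constants from (ii) to run a standard contraction argument on a ball of radius comparable to $\eps$. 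This gives a unique intersection point $[w_1,w_2]$, depending Lipschitz-continuously on $(w_1,w_2)$. The displacement of $[w_1,w_2]$ from $w_1$ along $W^u$ is then controlled by $(\sin(\alpha/2))^{-1}d(w_1,w_2)$ up to the uniform $C^1$ distortion of the leaves, which yields $d^u(w_1,[w_1,w_2])\le\kappa\,d(w_1,w_2)$ with $\kappa=\kappa(\eta)\ge 1$ absorbing all constants, and symmetrically for $d^{cs}$.
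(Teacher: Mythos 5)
The paper does not prove this lemma at all: it is quoted verbatim from \cite{BCFT} (Lemma 4.4 there), and the present article simply imports it as a black box. So there is no ``paper's own proof'' to compare against; what you have written is a reconstruction of the BCFT argument from scratch.

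As a reconstruction, your sketch is sound and follows what I believe is essentially the route taken in \cite{BCFT}. The core observation --- that $\lambda(v)\ge\eta$ unpacks, via the Riccati solutions $U^s_v\le 0\le U^u_v$ describing the horosphere curvature, into $U^u_v-U^s_v\ge 2\eta\,\Id$, and that this, together with the compactness bound $\|U^{s/u}_v\|\le C$, is precisely a quantitative transversality statement for the graphs $E^s_v,E^u_v$ --- is the right one, and it is exactly the point at which the hypothesis $v\in\Reg(\eta)$ enters. Your remark that $E^0$ is transverse to $E^s\oplus E^u$ ``for algebraic reasons'' is correct and worth making explicit: in the Jacobi-field identification $E^0_v$ is spanned by the tangential datum $(v,0)$, while $E^s_v,E^u_v$ lie in the orthogonal Jacobi fields, so this transversality is metric and does not depend on $\eta$ at all. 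The propagation step (uniform continuity of the bundles on the compact manifold $T^1M$ pushes transversality at $v$ to transversality between $E^u_{w_1}$ and $E^{cs}_{w_2}$ for $w_1,w_2$ near $v$, even though those points may leave $\Reg(\eta)$) is also the right idea, and is where compactness is used. One small expositional point: an ``angle between $E^u_{w_1}$ and $E^{cs}_{w_2}$'' at two different points needs a chart or a connection to make sense of; you implicitly supply this later when you pass to exponential coordinates at $v$, but it would be cleaner to set up the coordinates before stating the transversality at nearby points. Finally, note that one could also get the uniform transversality more softly, without the explicit $2\eta$ bound: $\lambda>0$ on $\Reg(\eta)$ gives pointwise transversality, the bundles are continuous, and $\Reg(\eta)$ is compact, so the infimum of the angle over $\Reg(\eta)$ is attained and positive. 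Your quantitative version via the Riccati inequality is strictly more informative (it shows $\kappa$ can be taken to blow up like $1/\eta$ as $\eta\to 0$), at the cost of a bit more bookkeeping; both prove the lemma.
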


Uniformity of the local product structure on $\Reg(\eta)$ is used to obtain the specification property for orbit segments starting and ending in $\Reg(\eta)$. Precisely, we define the collection of orbit segments
 \[
 \CCC (\eta):=\{(v,t): \lambda(v) \geq\eta, ~ \lambda(g_t v) \geq \eta \}.
 \]
We have the following result.
\begin{theorem} \cite[Theorem 4.1]{BCFT}
For each $\eta>0$, the collection of orbit segments $\CCC(\eta)$  has the specification property. That is,  given $\rho>0$,  there exists $\tau=\tau(\rho)$ such that for every $(x_1, t_1)$, $\dots, (x_N, t_N)\in \mathcal{C}$  and \emph{every} collection of times $\tau_1, \ldots, \tau_{N-1}$ with $\tau_i \geq \tau$ for all $i$, there exists a point $y\in X$ such that for $s_0=\tau_0=0$ and $s_j=\sum_{i=1}^j t_i + \sum_{i=0}^{j-1} \tau_i$, we have
\[
f_{s_{j-1} + \tau_{j-1}}(y)\in B_{t_j}(x_j, \rho)
\]
for every $j\in \{ 1,\dots, N\}$.  
\end{theorem}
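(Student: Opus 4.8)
I would prove this by reducing it to a single two--segment gluing operation and then iterating that operation from the last orbit segment backwards. The structural fact that makes everything work is that each $(v,t)\in\CCC(\eta)$ has \emph{both} of its endpoints, $v$ and $g_tv$, in the compact set $\Reg(\eta)$, where Lemma 4.4 supplies a uniform local product structure with constants $\delta=\delta(\eta)$ and $\kappa=\kappa(\eta)$, and where the flow's contraction and expansion rates along $W^s$ and $W^u$ are uniformly hyperbolic. I would fix once and for all a small working scale $\eps=\eps(\eta,\rho)\le\delta$, shrunk as needed so that the images of all the bracket operations below, and the errors produced by the connecting lemma, stay inside the target radius $\rho$.

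The first ingredient is a connecting lemma: for every $\eta,\rho'>0$ there is $\tau=\tau(\eta,\rho')$ such that for all $p,q\in\Reg(\eta)$ and every $T\ge\tau$ there is $z$ with $d(z,p)<\rho'$ and $d(g_Tz,q)<\rho'$. I would deduce this from topological mixing of the geodesic flow on the unit tangent bundle of a rank one non-positive curvature closed manifold (for instance because the Knieper measure $\mukbm$ has full support and is mixing, so $g_T(U)\cap V\ne\emptyset$ for all large $T$ whenever $U,V$ are open): cover the compact set $\Reg(\eta)$ by finitely many balls of radius $\rho'/2$, apply mixing to each ordered pair of these balls to get a threshold time, and let $\tau$ be the maximum of these finitely many thresholds. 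One can also argue geometrically in the universal cover, connecting lifts of regular vectors by a geodesic and using convexity of the distance functions, but uniformity over $\Reg(\eta)$ is still the point.

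Given this, I would glue $(x,s),(x',s')\in\CCC(\eta)$ across a prescribed gap $T\ge\tau$ as follows. Put $p=g_sx$ and $q=x'$, both in $\Reg(\eta)$, and let $z$ be a connecting point. Using the bracket of Lemma 4.4 at $q$, set $\zeta=[g_Tz,q]\in W^u_{\mathrm{loc}}(g_Tz)\cap W^{cs}_{\mathrm{loc}}(q)$; since $\zeta$ lies on the weak stable leaf of $x'$ its forward orbit shadows $(x',s')$, and since $\zeta$ lies on the strong unstable leaf of $g_Tz$ and the backward flow does not expand $W^u$, the point $z'':=g_{-T}\zeta$ lies within $\kappa d(g_Tz,q)$ of $z$, hence near $p$. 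Using the bracket at $p$, set $z'=[p,z'']\in W^u_{\mathrm{loc}}(p)\cap W^{cs}_{\mathrm{loc}}(z'')$; since $z'$ lies on the strong unstable leaf of $p=g_sx$, the point $y:=g_{-s}z'$ shadows $(x,s)$, and since $z'$ lies on the weak stable leaf of $z''$, the point $g_{s+T}y=g_Tz'$ stays close to $g_Tz''=\zeta$, so its forward orbit shadows $(x',s')$. This gives the case $N=2$. For general $N$ I would induct from the back: having built $y_{j+1}$ whose orbit shadows $(x_{j+1},t_{j+1}),\dots,(x_N,t_N)$ with the prescribed gaps, I connect $g_{t_j}x_j\in\Reg(\eta)$ into a small ball around $y_{j+1}$, take the bracket at $x_{j+1}\in\Reg(\eta)$ (legitimate since both $y_{j+1}$ and the connecting point lie close to $x_{j+1}$), and repeat the two corrections to obtain $y_j$; then $y=y_1$.

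The step I expect to be the main obstacle is the bookkeeping in this induction. Each gluing perturbs the orbit that shadows all of the \emph{later} segments, and one must ensure that these perturbations do not accumulate, since $\tau$, the working scale, and the final shadowing radius must all be independent of $N$. This is exactly where $\eta>0$ is indispensable: each transition passes through $\Reg(\eta)$ near its two ends, which supplies a definite amount of contraction along $W^s$, so the error produced by the $j$-th gluing decays geometrically before it reaches the $k$-th segment and the total error at any segment stays bounded in $N$. On $\Sing$, by contrast, the geodesic flow has flat strips, Lemma 4.4 fails, and $\CCC(\Sing)$ has no specification, so the argument genuinely uses the uniform hyperbolicity available only on the regular set; see \cite{BCFT}.
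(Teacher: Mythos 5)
Note first that the paper merely cites this result from \cite{BCFT}; it is not proved in the present paper, so there is no in-text argument to compare against. Measured against the argument in \cite{BCFT} itself, your strategy is essentially the same one: use the uniform local product structure on $\Reg(\eta)$ from Lemma 4.4, a connecting lemma to bridge the prescribed gaps $\tau_j$, and an induction whose errors are controlled by the definite contraction that $\eta$-regularity at the endpoints of each orbit segment provides. Your heuristic for why the errors form a convergent geometric series is the right one.

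Two points deserve attention. First, a small slip: in the inductive step you propose to connect $g_{t_j}x_j$ into a ball around $y_{j+1}$, but the uniformity of the connecting lemma requires both endpoints to lie in the compact set $\Reg(\eta)$, and $y_{j+1}$ need not; you should connect to $x_{j+1}\in\Reg(\eta)$ and then use that $y_{j+1}$ lies close to $x_{j+1}$. Second, deriving the connecting lemma from mixing of $\mukbm$ is logically sound (Babillot's mixing theorem does not rely on specification, so there is no circularity), but it imports more machinery than the direct transitivity argument for regular unstable leaves used in \cite{BCFT}, and a careless reader could worry about circularity since the present paper applies specification precisely to study $\mukbm$; citing topological mixing directly, or the density of unstable leaves, would be cleaner. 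Your acknowledgment that the bookkeeping is the crux is apt, and the mechanism you describe — the gluing error lands in $W^{cs}$, does not grow under forward flow in non-positive curvature, and shrinks by a uniform factor per segment because each segment begins and ends in $\Reg(\eta)$ — is exactly what must be tracked; note that the contraction is only guaranteed for a definite, bounded time near each regular endpoint (the interior of a segment in $\CCC(\eta)$ and the connecting arcs may pass through $\Sing$), so the per-segment decay is a fixed factor $\theta<1$ rather than exponential in $t_j$, which is still enough to bound the total error independently of $N$.
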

We recall some other results that we will use from \cite{BCFT} and \cite{CT16}. We often consider the following set of orbit segments
\[
\BBB(\eta):=\{(v,t): \frac{\int_{0}^t \lambda(g_u(v))du}{t}<\eta\}.
\]
Note that $\lambda$ vanishes on $\Sing$, so any orbit segment in $\Sing \times [0, \infty)$ is a member of $\BBB(\eta)$. It was shown in \cite[\S 5]{BCFT} that $\lim_{\eta \to 0} h(\BBB(\eta)) = h(\Sing)$. For the class of geodesic flows under consideration, it is known that 
\[
h(\Sing) < h(T^1M). 
\] 
This is easy in the case that $M$ is a surface, since $h(\Sing)=0$. However, this entropy gap is a highly non-trivial fact in higher dimensions. It was first proved as a consequence of Knieper's work \cite{gK98}, and a direct proof is given in \cite{BCFT}. The geodesic flow has a unique measure of maximal entropy, known as the Knieper-Bowen-Margulis measure, which we denote by $\mu_{\text{KBM}}$.

 \subsection{Counting closed regular geodesics} \label{countingclosed}
For a small $\delta>0$, we define $\Per_R(T-\delta, T]$ to be the set of closed regular geodesics which have length in the interval $(T-\delta, T]$. For $\gamma \in \Per_R(T-\delta, T]$, we write $|\gamma|$ for its length, and $v_\gamma$ for an element of $T^1M$ chosen to be tangent to $\gamma$.

 Recall from Proposition 6.4 in \cite{BCFT}, for any $\delta>0$, there exists $T_{\delta}>0$ and
 \begin{equation} \label{betabound}
     \begin{aligned}
     \beta=\beta(\delta)\approx e^{-hT_{\delta}}
     \end{aligned}
 \end{equation}
such that for all $T>T_{\delta}$, we have
\begin{equation}
    \begin{aligned}
    \frac{\beta}{T}e^{Th}\leq  \# \Per_R(T-\delta, T] \leq \beta^{-1}e^{Th}.
    \end{aligned}
\end{equation}
We take $T_\delta \to \infty$ as $\delta \to 0$ (and the proofs of Proposition 4.5, Lemma 4.7 and Proposition 6.4 in \cite{BCFT} show that this is necessary). For $\eta>0$, we define the \emph{uniformly regular closed geodesics} as
 \[
 \text{Per}_R^{\eta}(T-\delta,T]:=\{ \gamma \in \Per_R(T-\delta, T] : \int_0^{|\gamma|} \lambda (g_s v_\gamma) ds \geq |\gamma| \eta \},
 \]
 that is the collection of elements in $\text{Per}_R(T-\delta,T]$ whose average of $\lambda$ is at least $\eta$. Writing $h':=\frac{h(\text{Sing})+h}{2}$, we fix $\eta>0$ throughout the rest of the paper such that $h(\BBB(2\eta))< h'<h$. We also choose $\epsilon$ so that $4\epsilon$ is an expansivity constant. In particular, $h(T^1M, 4\epsilon)=h$. Notice that we can choose $\epsilon$ smaller if necessary. 

Define $\delta':=\frac{\eta}{\lambda_{\text{max}}}$ where $\lambda_{\text{max}}:=\max\{\lambda(v):v\in T^1M\}$. We now argue that for $\delta$ sufficiently small, $\# \Per_R^{\eta}(T-\delta,T]$ is bounded uniformly from below.
\begin{lemma} \label{T0}
For any $\delta<\delta'$, there exists $T_0=T_0(\delta,\eta)$ such that for all $T>T_0$,
\begin{equation} \label{eqlowerPer}
    \begin{aligned}
     \# \Per_R^{\eta}(T-\delta,T]\geq \frac{\beta}{2T}e^{Th}.
    \end{aligned}
\end{equation}
\end{lemma}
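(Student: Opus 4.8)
Here is the plan. I would bound the number of closed regular geodesics in $\Per_R(T-\delta,T]$ that fail to be uniformly $\eta$-regular, and show this count is negligible next to the lower bound $\#\Per_R(T-\delta,T]\ge\frac{\beta}{T}e^{Th}$ (valid for $T>T_\delta$) recalled above from \cite[Proposition 6.4]{BCFT}. Set $\PPP_{\mathrm{bad}}(T):=\Per_R(T-\delta,T]\setminus\Per_R^\eta(T-\delta,T]$, i.e.\ the set of $\gamma\in\Per_R(T-\delta,T]$ with $\int_0^{|\gamma|}\lambda(g_s\gamma(0))\,ds<\eta|\gamma|$. Since $\#\Per_R^\eta(T-\delta,T]=\#\Per_R(T-\delta,T]-\#\PPP_{\mathrm{bad}}(T)$, it suffices to produce $T_0=T_0(\delta,\eta)\ge T_\delta$ with $\#\PPP_{\mathrm{bad}}(T)\le\frac{\beta}{2T}e^{Th}$ for all $T>T_0$; then \eqref{eqlowerPer} follows by subtraction.

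First I would normalise all bad geodesics to the common length $T$ and place them in the collection $\BBB(2\eta)$. If $\gamma\in\PPP_{\mathrm{bad}}(T)$ and $\ell:=|\gamma|\in(T-\delta,T]$, then $\int_0^\ell\lambda(g_s\gamma(0))\,ds<\eta\ell\le\eta T$, while $\int_\ell^T\lambda(g_s\gamma(0))\,ds\le(T-\ell)\lambda_{\text{max}}<\delta\lambda_{\text{max}}$, so
\[
\int_0^T\lambda(g_s\gamma(0))\,ds<\eta T+\delta\lambda_{\text{max}}<2\eta T
\]
for every $T\ge 1$; the last inequality uses the standing assumption $\delta<\delta'=\eta/\lambda_{\text{max}}$, which is precisely where that hypothesis enters. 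Hence $(\gamma(0),T)\in\BBB(2\eta)$ for every $\gamma\in\PPP_{\mathrm{bad}}(T)$.

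The heart of the argument is then a counting estimate: the number of distinct closed geodesics of length in $(T-\delta,T]$ whose time-$T$ orbit segment lies in $\BBB(2\eta)$ is at most $e^{(h(\BBB(2\eta),\epsilon)+o(1))T}$ as $T\to\infty$. To obtain this I would fix a maximal $(T,\epsilon)$-separated subset $E$ of $\{v\in T^1M:(v,T)\in\BBB(2\eta)\}$; by definition of $h(\BBB(2\eta),\epsilon)$ we have $\#E\le e^{(h(\BBB(2\eta),\epsilon)+o(1))T}$, and by maximality the basepoint $\gamma(0)$ of each such closed geodesic lies in $\overline{B_T(v,\epsilon)}$ for some $v\in E$. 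It then remains to bound, uniformly in $v$, the number of distinct closed geodesics of length in $(T-\delta,T]$ passing through a single $\overline{B_T(v,\epsilon)}$. This last point is the main obstacle: the relevant orbits $2\epsilon$-shadow one another along a window of length $T$ exceeding each of their least periods, and entropy-expansivity at scale $4\epsilon$ should force only boundedly many such geodesics to coexist; alternatively, this counting can be quoted directly from the closed-orbit analysis for $\BBB$-type collections carried out in \cite[\S6]{BCFT} (see also \cite{CT16}).

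Granting the counting estimate, the conclusion is arithmetic. We fixed $\eta$ so that $h(\BBB(2\eta))<h'<h$, and $h(\BBB(2\eta),\epsilon)\le h(\BBB(2\eta))$, so $\#\PPP_{\mathrm{bad}}(T)\le e^{h'T}$ once $T$ is large enough. Since $h-h'>0$ and $\beta=\beta(\delta)$ is a fixed positive constant, we have $e^{(h-h')T}\ge 2T/\beta$ for all $T$ beyond some $T_0=T_0(\delta,\eta)$ (the dependence on $\delta$ entering through $\beta(\delta)$ and $T_\delta$, and on $\eta$ through the implied constants attached to $\BBB(2\eta)$); rearranging gives $\#\PPP_{\mathrm{bad}}(T)\le e^{h'T}\le\frac{\beta}{2T}e^{Th}$ for $T>T_0$. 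Subtracting from $\#\Per_R(T-\delta,T]\ge\frac{\beta}{T}e^{Th}$ then yields \eqref{eqlowerPer}.
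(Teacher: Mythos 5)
Your general strategy is the same as the paper's: show the bad geodesics lie in $\BBB(2\eta)$, bound their count using $h(\BBB(2\eta))<h'$, and subtract. The normalization step putting $(\gamma(0),T)\in\BBB(2\eta)$ via $\int_0^T\lambda < \eta T+\delta\lambda_{\max}<2\eta T$ is exactly right, and the final arithmetic is fine.

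However, there is a genuine gap at the counting step, which you yourself flag as the ``main obstacle.'' You pass to a maximal $(T,\epsilon)$-separated set $E\subset\BBB(2\eta)_T$ and then try to bound, uniformly in $v\in E$, how many distinct closed geodesics of length in $(T-\delta,T]$ can have basepoint in a single $\overline{B_T(v,\epsilon)}$; you suggest entropy-expansivity ``should'' make this bounded but do not prove it. This detour is unnecessary and is where the argument stalls. The missing ingredient is the fact — used directly in the paper and attributed to Knieper \cite[\S 6]{gK98} together with the choice that $4\epsilon$ is an expansivity constant — that the basepoints $v_\gamma$ of distinct closed regular geodesics in $\Per_R(T-\delta,T]$ already form a $(T,4\epsilon)$-separated set. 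Once you have this, $\{v_\gamma:\gamma\in\PPP_{\mathrm{bad}}(T)\}$ is itself a $(T,4\epsilon)$-separated subset of $\BBB(2\eta)_T$, so its cardinality is bounded by $e^{Th'}$ for $T$ large, with no need to introduce $E$ or to control a multiplicity per Bowen ball. You should replace the ``maximal separated set plus multiplicity bound'' step with this direct separation fact.
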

\begin{proof}
Recall that we fix $\eta$ such that $h(\BBB(2\eta)) <h'$. It follows that there exists $T_0'=T_0'(\eta)>0$ so for all $T>T_0'$, there are maximal $(T, 4 \epsilon)$-separated sets $E_T$ for $\BBB(2 \eta)$ so that $\#E_T< e^{Th'}$ and also so that $e^{Th'}< \frac{\beta}{2T}e^{Th}$. Given $\delta\in (0,\delta')$, define $T_0(\delta,\eta):=\max\{T_0'(\eta),T_{\delta},1\}$.  With a fixed $\eta$, since $T_\delta \to \infty$ as $\delta \to 0$, we observe that $T_0 (\delta,\eta)=T_\delta$ when $\delta$ is sufficiently small. We write $\Per_R^{< \eta}(T-\delta,T] := \Per_R(T-\delta,T] \setminus \Per_R^{\eta}(T-\delta,T]$.  For $T>T_0$ and any $\gamma\in \text{Per}_R^{<\eta}(T-\delta,T]$, we choose a vector $v_{\gamma}\in T^1M$ such that it is tangent to $\gamma$ at some point. Due to the difference in the period of elements in $\text{Per}_R(T-\delta,T]$, different choices of $v_{\gamma}$ may lead to variations in the precise value of $\int_0^T\lambda(g_sv_{\gamma})ds$. However, we have
\begin{equation}         \label{lambda2eta}
    \begin{aligned}
    \int_0^T\lambda(g_s v_{\gamma})ds\leq |\gamma|\eta+\delta'\lambda_{\text{max}}<T\eta+\eta<2T\eta,
    \end{aligned}
\end{equation}
which shows that we always have $(v_{\gamma},T)\in \BBB(2\eta)$. By the choice of $4\epsilon$ and $\mathsection 6$ in \cite{gK98} we know elements in $\text{Per}_R(T-\delta,T]$ are $(T,4\epsilon)$-separated, which in turns shows that $\Per_R^{< \eta}(T-\delta,T]< e^{Th'}<\frac{\beta}{2T}e^{Th}$. As a consequence, we have
 \[
    \begin{aligned}
    \# \Per_R^{\eta}(T-\delta,T] & = \# \Per_R^{\eta}(T-\delta,T]-\# \Per_R^{<\eta}(T-\delta,T] \\
    & >\frac{\beta}{T}e^{Th}-\frac{\beta}{2T}e^{Th}=\frac{\beta}{2T}e^{Th}. \qedhere
    \end{aligned}
\]
\end{proof}

From now on we always assume that $\delta$ and $T$ satisfy the conditions in Lemma \ref{T0}. By the definition of $\Per_R^{\eta}(T-\delta,T]$, if $\gamma$ is an element in $\Per_R^{\eta}(T-\delta,T]$, there must be some $t\in [0,T)$ such that $v = \dot \gamma(t) \in \text{Reg}(\eta)$. Since $v$ is periodic, we know that $(v, |\gamma|)\in \CCC(\eta)$. 

For each $\gamma \in \text{Per}_R^{\eta}(T-\delta,T]$, we choose $v=v(\gamma)$ such that $v \in \gamma \cap \Reg(\eta)$. We let 
\[
E_{\delta}(T)= \{v(\gamma) : \gamma \in \text{Per}_R^{\eta}(T-\delta,T]\},
\]
recalling that $E_{\delta}(T)$ is a $(T, 4\epsilon)$-separated set. From the definition of $E_{\delta}(T)$ and \eqref{eqlowerPer}, we know that
\begin{equation}      \label{ETlowerbound}
    \begin{aligned}
     \# E_{\delta}(T) =   \# \Per_R^{\eta}(T-\delta,T] \geq \frac{\beta}{2T}e^{Th}.
    \end{aligned}
\end{equation}
We will often work with the collection of orbit segments
\[
\{(v, T) : v \in E_{\delta}(T) \}.
\]
Here we use the same $T$ across all $v\in E_{\delta}(T)$ so we can compare lengths uniformly - note that $T$ differs from the least period of $\gamma(v)$ by at most $\delta$. 

\subsection{Growth of variations on $\CCC(\eta)$}

For a collection of orbit segments $\CCC$, any $\delta>0, T>0$ and $h\in C(T^1M)$ we define
 $$\omega(h,T, \delta,\CCC):=\sup_{(u,T)\in \CCC ,v\in B_T(u,\delta)} \left |H(u,T)-H(v,T) \right |.$$

The following analogy of Lemma 5.6 in \cite{TW19} holds for $\omega$, and is a crucial estimate in the construction given in \S \ref{construction}.

\begin{lemma} \label{refscale}
Let $\CCC(\eta)$ be the collection previously defined. Then for sufficiently small $\delta_0$, for any $h\in C(T^1M)$, we have 
\begin{equation}
    \lim_{T\rightarrow \infty}\frac{\omega(h,T, \delta_0,\CCC(3\eta/4))}{T}=0.
\end{equation} 
\end{lemma}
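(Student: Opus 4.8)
The plan is to reduce the lemma to a bound on the Lebesgue measure of a set of ``bad times'' and then control that set via hyperbolicity at regular points. Fix $\delta_0$ small enough that the local product structure lemma (\cite[Lemma 4.4]{BCFT}) applies at every point of $\Reg(3\eta/4)$ at scale $\kappa\delta_0$. Put $\theta(r):=\sup\{|h(x)-h(y)|:d(x,y)\le r\}$; since $h$ is uniformly continuous on the compact space $T^1M$, $\theta(r)\to0$ as $r\to0$. Given $(u,T)\in\CCC(3\eta/4)$, $v\in B_T(u,\delta_0)$ and $\epsilon_1>0$, let $B=B(u,v,T,\epsilon_1)=\{s\in[0,T]:d(g_su,g_sv)>\epsilon_1\}$. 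Splitting the integral over $B$ and its complement,
\[
|H(u,T)-H(v,T)|\le\int_0^T|h(g_su)-h(g_sv)|\,ds\le\theta(\epsilon_1)\,T+2\|h\|_\infty\,\Leb(B).
\]
So it suffices to prove: there is $\rho(\eta_1)\to0$ as $\eta_1\to0$ and, for each $\epsilon_1>0$ and $\eta_1\in(0,3\eta/4]$, a constant $M_0=M_0(\epsilon_1,\eta_1,\delta_0)$, with $\Leb(B)\le M_0+\rho(\eta_1)\,T$ uniformly over admissible $(u,v,T)$. Indeed this gives $\limsup_{T\to\infty}\omega(h,T,\delta_0,\CCC(3\eta/4))/T\le\theta(\epsilon_1)+2\|h\|_\infty\,\rho(\eta_1)$ for all $\epsilon_1,\eta_1$, and letting $\epsilon_1\to0$ and $\eta_1\to0$ independently yields the limit $0$. (The trivial bound $\Leb(B)\le T$ only recovers $\limsup\le\theta(\delta_0)$, so the point is genuinely the sublinearity of the bad set.)

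To bound $\Leb(B)$ I would use two ingredients from \cite{BCFT}. First, the uniform estimates at points of $\Reg(\eta_1)$: distance along strong stable and strong unstable leaves is monotone under the flow and contracts at a definite (positive, $\eta_1$-dependent) exponential rate at every instant at which the base orbit lies in $\Reg(\eta_1)$. Using the local product structure at $u$, write $v$ — up to a flow reparametrization by a time $|\tau|\lesssim\kappa\delta_0$, which changes $H(v,T)$ by only $O(1)$ and is harmless — as a point on the local strong stable leaf of $u$ combined with a strong unstable displacement. Because $v\in B_T(u,\delta_0)$ shadows $u$ throughout $[0,T]$, the unstable displacement at time $0$ is forced to be exponentially small in the total ``regular time'' $\Leb(G_T)$, where $G_T:=\{s\in[0,T]:\lambda(g_su)\ge\eta_1\}$ (otherwise it would expand past $\delta_0$ before time $T$). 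Hence the stable part of the displacement falls below $\epsilon_1$ as soon as $\Leb(G_T\cap[0,s])\ge M_0$ and the unstable part as soon as $\Leb(G_T\cap[s,T])\ge M_0$, giving $\Leb(B)\le 2M_0+2\,\Leb([0,T]\setminus G_T)$.

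Second, the decisive geometric input: orbit segments in $\CCC(3\eta/4)$ spend only a negligible fraction of their time near $\Sing$, that is, $\Leb([0,T]\setminus G_T)=\Leb\{s\in[0,T]:\lambda(g_su)<\eta_1\}\le\rho(\eta_1)\,T$ with $\rho(\eta_1)\to0$ as $\eta_1\to0$, uniformly over $(u,T)\in\CCC(3\eta/4)$. Here the endpoint condition $\lambda(u),\lambda(g_Tu)\ge3\eta/4$ is essential: by convexity of the distance function between geodesics in non-positive curvature together with the flat-strip structure near $\Sing$, an orbit that remains in a small neighborhood of $\Sing$ over a sub-interval of length $\ell$ is ``trapped'' near $\Sing$ for a further time $\gtrsim\ell/(\text{width of the neighborhood})$, so it cannot get back to $\Reg(3\eta/4)$ within a bounded time; summing over the excursions near $\Sing$ bounds the total near-$\Sing$ time by $\rho(\eta_1)T$ with $\rho(\eta_1)\to0$. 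This is the analogue of the estimate underlying Lemma~5.6 of \cite{TW19}, and I expect it to be the main obstacle: the trapping phenomenon is transparent from the flat-strip picture and is the pointwise counterpart of the entropy bound $\lim_{\eta\to0}h(\BBB(\eta))=h(\Sing)$, but making it uniform over all of $\CCC(3\eta/4)$, when $\Sing$ may be geometrically complicated, needs care and is where I would rely on the fine estimates for $\lambda$ in \cite{BCFT}. The remaining bookkeeping in the first ingredient — uniformity of the local-product-structure decomposition and the justification of the ``exponentially small unstable displacement'' purely from $v\in B_T(u,\delta_0)$ — is then routine given those estimates.
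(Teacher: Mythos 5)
Your proposal has a genuine gap, and it also diverges from the paper's approach at the decisive step. The paper does \emph{not} decompose $[0,T]$ into ``good'' and ``bad'' times; instead, it uses the endpoint regularity directly. After the local product structure decomposition (your first ingredient, which does match the paper), the terminal condition $\lambda(g_T u)\geq 3\eta/4$ together with the $\eta/4$-continuity of $\lambda$ at scale $\kappa\delta_0$ gives $\lambda(g_T\hat v)\geq\eta/2$ for every $\hat v$ on the local stable arc between the two points; then \cite[Lemma~3.10]{BCFT} is invoked to conclude \emph{uniform} exponential contraction at rate $\eta/2$ over the \emph{entire} interval, namely $d^s(g_{t_1}u',g_{t_1}v')\geq e^{\eta(t_2-t_1)/2}\,d^s(g_{t_2}u',g_{t_2}v')$ for all $0\leq t_1\leq t_2\leq T$. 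That single estimate --- regularity at the terminal time propagating a contraction rate along the whole orbit segment --- is what makes the lemma work: after a fixed time $\hat T$ (independent of $T$) the orbits are within $\hat\delta$, and uniform continuity of $h$ finishes the argument, with the unstable direction handled symmetrically. There is no bad set to estimate.

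Your second ``decisive geometric input,'' the claim that $\Leb\{s\in[0,T]:\lambda(g_su)<\eta_1\}\leq\rho(\eta_1)\,T$ uniformly over $\CCC(3\eta/4)$ with $\rho(\eta_1)\to 0$, is exactly the unproved step --- you flag it yourself as the main obstacle, and the ``trapping near $\Sing$'' heuristic you offer for it is not an argument: $\Sing$ is flow-invariant but its neighborhoods are not, and nothing in the definition of $\CCC(\eta)$ (which constrains only the two endpoints) obviously prevents a long excursion where $\lambda$ is small. To the extent something like your claim does hold for $\CCC(3\eta/4)$, the reason is precisely the endpoint-to-interior propagation of $\lambda$-bounds underlying \cite[Lemma~3.10]{BCFT} (monotonicity of the horosphere curvature quantities along orbits, via Riccati comparison), not a flat-strip trapping phenomenon. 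So the fix is to drop the bad-set decomposition entirely and use \cite[Lemma~3.10]{BCFT} to get contraction throughout $[0,T]$ directly from the endpoint hypothesis, as the paper does.
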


\begin{proof}
This proof is parallel to the one of Lemma 5.6 in \cite{TW19}. Choose the same $\eta$ as before and $\delta_0$ such that 
\begin{enumerate}
    \item $\text{Reg}(\frac{3\eta}{4})$ has local product structure at scale $4\delta_0'$, with coefficient $\kappa=\kappa(\frac{3\eta}{4},4\delta_0')>1$. Take $\delta_0=\delta_0'/\kappa$.
    \item for any $u,v\in T^1M$ such that $d(u,v)<\kappa\delta_0$, we have $|\lambda(u)-\lambda(v)|<\frac{\eta}{4}$. In particular, we have $B(\text{Reg}(\frac{3\eta}{4}),\kappa\delta_0)\subset \text{Reg}(\frac{\eta}{2})$ and $B(\text{Reg}(\frac{\eta}{2}),\kappa\delta_0)\subset \text{Reg}(\frac{\eta}{4})$.
  
\end{enumerate}

%We can choose such $\delta_0$ by  first fixing some scale $\delta_0''$ that (2) holds, then choosing $\delta_0'$ smaller than $\delta_0''$ such that the local product structure holds and defining $\delta_0$ accordingly. 
Consider $(u,T)\in \CCC(3\eta/4)$ and $v\in B_T(u,\delta_0)$. By the local product structure, there is a vector $u_0\in T^1M$ such that $u_0\in W^s_{\kappa \delta_0}(u)\cap W^{cu}_{\kappa \delta_0}(v)$. Then there exists $s\in (-\kappa\delta_0,\kappa\delta_0)$ such that $g_s(u_0)\in W^u_{\kappa \delta_0}(v)$. Observe that $d(g_Tu ,g_Tv )<\delta_0$, $d(g_Tu,g_T u_0)<\kappa\delta_0$, $d(g_T u_0,g_{T+s} u_0)<\kappa\delta_0$. Combining the above three inequalities together we have $d(g_T v,g_{T+s}u_0)<3\kappa\delta_0$. As $g_{T+s}u_0\in W^u(g_Tv)$, we know $d^u(g_{T+s}u_0,g_Tv)<3\kappa^2\delta_0$, therefore $d^{cu}(g_T u_0,g_T v)<4\kappa^2\delta_0=4\kappa\delta_0'$. In other words, $g_T u_0\in W^s_{4\kappa \delta_0'}(g_T u)\cap W^{cu}_{4\kappa \delta_0'}(g_T(v))$. As $d(g_T u,g_T v)<\delta_0$ and $g_T u\in \text{Reg}(\frac{3\eta}{4})$, by the local product structure we know $g_T u_0\in W^s_{\kappa \delta_0}(g_T u)\cap W^{cu}_{\kappa \delta_0}(g_T v)$. In particular, $g_{T+s} u_0\in W^u_{\kappa \delta_0}(g_T v)$. We conclude that $g_{t+s} u_0\in W^u_{\kappa \delta_0}(g_t v)$ and $g_{t} u_0\in W^s_{\kappa \delta_0}(g_t u)\cap W^{cu}_{\kappa \delta_0}(g_t v)$ for all $t\in [0,T]$.  Now, for any fixed $h\in C(T^1M)$, we can bound the variation of $h$ over $(u,T)$ and $(v,T)$ by variations along the stable, central and unstable directions. To be more precise, we have
$|H(u, T)- H(v, T)| \leq |H(u, T)-H(u_0, T)| +| H(u_0, T) - H (g_{s} u_0, T) |+|H (g_{s} u_0, T)- H(v, T)|$. From the definition of $\CCC(3\eta/4)$ and property (2) of $\delta_0$, we know $\lambda(u),\lambda(g_T u)>\frac{3\eta}{4}$ and $\lambda(v),\lambda(g_T v)>\frac{\eta}{2}$, so $(v,T)\in \CCC(\frac{\eta}{2})$. Therefore, to prove (2.1), it suffices to prove the following 
\begin{equation}    \label{omegas}
    \lim_{T\rightarrow \infty}\frac{\omega_s(h,T;\kappa\delta_0,3\eta/4)}{T}=0
\end{equation}

and
\begin{equation}    \label{omegau}
    \lim_{T\rightarrow \infty}\frac{\omega_u(h,T;\kappa\delta_0,\eta/2)}{T}=0,
\end{equation}
where 
$$
\omega_s(h,T;\kappa\delta_0,3\eta/4):=\sup_{g_T(u)\in \text{Reg}(3\eta/4) ,v\in W^s_{\kappa\delta_0}(u)}|H(u, T) - H(v, T)|
$$
and
$$
\omega_u(h,T;\kappa\delta_0,\eta/2):=\sup_{u\in \text{Reg}(\eta/2) ,v\in g_{-T}(W^u_{\kappa\delta_0}(g_T(u)))}|(H(u, T)-H(v, T)|.
$$

Let us prove \eqref{omegas}. Consider $u'\in T^1M$ such that $g_T u'\in \text{Reg}(3\eta/4)$ and $v'\in W^s_{\kappa\delta_0}(u')$. For any $\hat{\epsilon}>0$, by uniform continuity of $h$ on $T^1M$ we know there exists $\hat{\delta}>0$ such that if $v_1,v_2\in T^1M$, $d(v_1,v_2)<\hat{\delta}$, then $|h(v_1)-h(v_2)|<\hat{\epsilon}$. Meanwhile, property (2) of $\delta_0$ shows that any vector $\hat{v}$ lying on the local stable arc connecting $u'$ and $v'$  satisfies $\lambda(g_T \hat{v})>\frac{\eta}{2}$. Following the proof of Lemma 3.10 in \cite{BCFT}, for any $0\leq t_1\leq t_2\leq T$ we have 
\begin{equation}    \label{stablecontraction}
    \begin{aligned}
    d^s(g_{t_1} u',g_{t_1} v')\geq e^{\eta(t_2-t_1)/2}d^s(g_{t_2} u',g_{t_2} v'). 
    \end{aligned}
\end{equation}

Since $d^s(u',v')<\kappa\delta_0$, by \eqref{stablecontraction} we have $d^s(g_t u',g_t v')<\kappa\delta_0e^{-\eta t/2}$. By writing $(2\log(\frac{\kappa\delta_0}{\hat{\delta}}))/\eta$ as $\hat{T}$ and assuming that $T>\hat{T}$ (which is possible since the choice on $\hat{T}$ does not depend on $T$ and $T$ approaches $\infty$), it is easy to see that $d(g_t u',g_t v')\leq d^s(g_t u',g_t v')<\hat{\delta}$ for $t\in [\hat{T},T]$. Therefore, we have $|H(u',T)-H(v',T)|
    \leq |H(u',\hat{T})-H(v',\hat{T})|+|H(g_{\hat{T}}u',T-\hat{T})-H(g_{\hat{T}}v',T-\hat{T})| 
    \leq 2\hat{T}||h||+(T-\hat{T})\hat{\epsilon}$, and this holds for all such $u',v'$ and $T>\hat{T}$, which shows that $\lim_{T\rightarrow \infty}\frac{\omega_s(h,T;\kappa\delta_0,3\eta/4)}{T}\leq \hat{\epsilon}$. By making $\hat{\epsilon}$ arbitrarily small, \eqref{omegas} is proved. \eqref{omegau} is proved similarly by replacing $3\eta/4,\eta/2$ with $\eta/2,\eta/4$. %This concludes the proof of Lemma \ref{refscale}.
\end{proof}
The small $\delta_0$ in the above lemma can be chosen so that $\epsilon<\delta_0<\delta'$, and we will do so in \S \ref{construction}. Recall $\epsilon$ is a choice of scale so that $4 \epsilon$ is an expansivity constant, and can be chosen arbitrarily small, so we can ensure that it is chosen smaller than $\delta_0$.

\section{Construction of measures} \label{construction}

We now construct sequences of measures that converge to $\mukbm$, and are reference measures for our CLT. Recall $T_0$ and $\delta_0$ are chosen as in Lemma \ref{T0} and Lemma \ref{refscale}. We start by constructing a sequence of $4$-tuples $(T_l,k_l,\delta_l,C_l)_{l\in \mathbb{N}}$ as follows.
\begin{hypothesis} \label{condstar}
We choose sequences $T_l \in (0, \infty)$, $k_l \in \mathbb{N}$, $\delta_l \in (0, \delta_0)$, and $C_l \in \NN$  which satisfy the following relationships:

1)   For all $l\in \mathbb{N}$, $T_l>\max\{T_0(\delta_l,\eta),1\}$

2) $T_l \uparrow \infty$, $\frac{T_l}{T_0(\delta_l,\eta)} \uparrow \infty$ and $k_l \uparrow \infty$

3)  $k_l\delta_l^2\downarrow 0$ and $\frac{\sqrt{k_l}T_l}{C_l}\downarrow 0$. 
\end{hypothesis}
Sequences which satisfy these conditions can be easily found by first choosing $\delta_l$, then $T_l$, then $k_l$ and $C_l$. For each $l$, let
\[
E_l := E_{\delta_l}(T_l)
\] 
be a $(T_l, 4\epsilon)$-separated set chosen by following the procedure described in \S \ref{countingclosed}. Each $x \in E_l$ corresponds to a regular closed geodesic $\gamma(x)$ with least period in the interval $(T_l-\delta_l, T_l]$. We write $t=t(x)$ for the period of $\gamma(x)$, and we recall that by construction $(x,t) \in \CCC(\eta)$.

For each $l\in \mathbb{N}$, we consider $E_l^{k_l}$, which is the Cartesian product of $E_l$ of order $k_l$. By the specification property on $\CCC(\eta)$ at scale $\epsilon$, we define a sequence of maps $\{\pi_l\}_{l\in \mathbb{N}}$ $: E_l^{k_l}\rightarrow T^1M$ as follows. The map $\pi_l$ sends $\ul x=(x_1,x_2,\dots,x_{k_l})$ to $\pi_l( \ul x)$ by finding a point which tracks the periodic orbit defined by $x_1$ for $C_l$ times, and then tracks the periodic orbit defined by $x_2$ for $C_l$ times, etc. The transition times (which depend on the choice of $\ul x$) are chosen so that times line up correctly at the start of each prescribed periodic orbit, independent of the choice of $\ul x$. 

More precisely, let $\ul x = (x_1, x_2, \ldots, x_{k_l}) \in E_l^{k_l}$. Since each $(x_i, t_i)$ with $x_i \in E_l$ is a member of $\CCC(\eta)$, each such orbit segment has the specification property at scale $\epsilon$. We use this property to construct a point $z= \pi_l(\ul x)$ such that
\begin{enumerate}
\item $d_{C_lt_1}(z, x_1)< \epsilon$,
\item $d_{C_lt_2}(g_{C_lT_l+M} z, x_2) < \epsilon$,
\item $d_{C_lt_3}(g_{2(C_lT_l+M)} z, x_3) < \epsilon$,
\end{enumerate}
and continue this way so that
$$d_{C_lt_i}(g_{(i-1)(C_lT_l+M)}z,x_i)<\epsilon$$
for all $1\leq i \leq k_l$. In the above, $M=M(\eta, \epsilon)$ is the transition time in specification for $\CCC(\eta)$. We note that the transition time between looping around one periodic orbit to the next is bounded by $M$ from below and $C_lT_l-C_l(T_l-\delta_l)+M=C_l\delta_l+M$ from above. We define
\[
P_l = \pi_l(E_l^{k_l}).
\]
Since $E_l$ is $(T_l, 4\epsilon)$-separated and we are applying specification at scale $\epsilon$, for $\ul x=(x_1,x_2,\dots,x_{k_l})$ and  $\ul y =(y_1,y_2,\dots,y_{k_l})\in (E_l)^{k_l}$, we have
\begin{enumerate}
    \item If $x_1=y_1$, then $d_{C_l(T_l-\delta_l)}(\pi_l(\ul x),\pi_l(\ul y))<2\epsilon$,
    \item If $x_1 \neq y_1$, then $d_{T_l}(\pi_l(\ul x),\pi_l(\ul y))>2\epsilon$,
\end{enumerate}
and similarly for each $i \in \{2, \ldots, k_l\}$. In particular, $\# P_l = \# E_l^{k_l}$ and the set $P_l$ is $(k_lC_lT_l + (k_l-1)M, 2\epsilon)$-separated. 

We define a measure $m_l$ by uniformly distributing mass over $E_l$, i.e. we let 
\[
m_l=\frac{1}{\#E_l}\sum_{v\in E_l}\delta_{v}.
\]
Now define $\mu_l$ to be the self-product measure of $m_l$ on $E_l^{k_l}$, equivalently the uniform measure on $E_l^{k_l}$, which is written as 

 \[
 \mu_l:=\frac{1}{\#E_l^{k_l}}\sum_{\ul x \in E_l^{k_l}}\delta_{\ul x}.
 \]
We write $L(v, t)$ for the natural measure along the orbit segment $(v, t)$, in the sense that for any continuous function $\phi$, $\int \phi ~d L(v, t) = \int_0^t \phi(g_s v) ds$. For each $l$, define a sequence of probability measures $\nu_l$ on $T^1M$ by
\[
\nu_l= \frac{1}{\# P_l}\sum_{y \in P_l} \frac{1}{T_l} L(y, T_l) = \frac{1}{\# E_l^{k_l}}\sum_{\ul x\in E_l^{k_l}} \frac{1}{T_l} L(\pi_l( \ul x), T_l).
\]
Note that although $\nu_l$ puts mass along only the orbit segment of length $T_l$, we will be interested in evaluating potentials of the form $F(\cdot, [s,t])$ with respect to $\nu_l$ with $s< t$ taking carefully chosen values in the interval $[0, k_lC_lT_l + (k_l-1)M]$. Integrals of these functions thus incorporate information along the whole prescribed length of the orbit segment. We often state our results with time running up to $k_l(C_l T_l + M)$, since this is a slightly simpler expression and the extra run of time $M$ makes no difference. This is the time $S_l$ denoted in Theorem \ref{intromain}.
\begin{lemma}   \label{nulconvergence}
Given $(T_l,k_l,\delta_l,C_l)_{l\in \mathbb{N}}$ satisfying Hypothesis \ref{condstar} the corresponding sequence of measures $\nu_l$ converges to the measure of maximal entropy $\mukbm$.
\end{lemma}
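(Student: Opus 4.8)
The plan is to show that any weak$^\ast$ limit point of $(\nu_l)$ is a flow-invariant measure with entropy equal to $h = h(T^1M)$, and then invoke uniqueness of the measure of maximal entropy (Knieper \cite{gK98}) to conclude $\nu_l \to \mukbm$. Since the space of invariant probability measures is weak$^\ast$ compact, it suffices to prove that the limit point is invariant and that its entropy is at least $h$; the reverse inequality is automatic from the Variational Principle.

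First I would establish \emph{asymptotic flow-invariance}: for any fixed $\tau \in \RR$ and $\phi \in C(T^1M)$, I would estimate $\left| \int \phi \circ g_\tau \, d\nu_l - \int \phi \, d\nu_l \right|$. By construction $\nu_l$ is an average of normalized orbit-segment measures $\frac{1}{T_l}L(\pi_l(\ul x), T_l)$, and pushing such a measure forward by $g_\tau$ changes it only near the two endpoints of the segment; the difference is bounded by $\frac{2|\tau|\,\|\phi\|}{T_l}$, which tends to $0$ since $T_l \uparrow \infty$. Hence every limit point is $g_\tau$-invariant for all $\tau$, i.e. flow-invariant.

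The main work is the \emph{entropy lower bound}. Here I would use that $P_l$ is $(k_l C_l T_l + (k_l-1)M,\, 2\epsilon)$-separated with $\#P_l = (\#E_l)^{k_l}$, and that by \eqref{ETlowerbound} we have $\#E_l \geq \frac{\beta(\delta_l)}{2T_l} e^{T_l h}$. The total orbit-segment length is $S_l \approx k_l(C_l T_l + M)$, so the exponential growth rate of $\#P_l$ relative to $S_l$ is
\[
\frac{\log \#P_l}{S_l} = \frac{k_l \log \#E_l}{S_l} \geq \frac{k_l\big(T_l h + \log \tfrac{\beta(\delta_l)}{2T_l}\big)}{k_l(C_l T_l + M)} \longrightarrow h,
\]
using $C_l \to \infty$ so $\frac{T_l}{C_l T_l + M} \to 1$, and $\beta(\delta_l) \approx e^{-hT_{\delta_l}}$ with $\frac{T_{\delta_l}}{C_l T_l} \to 0$ (which follows from Hypothesis \ref{condstar}, since $T_l/T_0(\delta_l,\eta) \uparrow \infty$ and $T_0 = T_{\delta_l}$ for small $\delta_l$). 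One must take a little care that $\nu_l$ only distributes mass along the \emph{initial} length-$T_l$ segment of each $\pi_l(\ul x)$, not along the whole orbit piece; the standard fix is to average the pushed-forward measures $\frac{1}{T_l}(g_{-s})_*L(\pi_l(\ul x), T_l)$ over $s$ in a suitable window of length comparable to $S_l$, producing a measure $\tilde\nu_l$ that is close to $\nu_l$ in a Cesàro sense and is supported on a genuinely $(S_l, 2\epsilon)$-separated set with the right cardinality; then the Katok-type entropy estimate (or the standard argument that a limit of such uniform measures on separated sets has entropy at least the growth rate, valid since $4\epsilon$ is an expansivity constant so $h(T^1M,2\epsilon) = h$) gives $h_\mu \geq h$ for any limit point $\mu$. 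I would phrase this via the Misiurewicz-style lower bound on $h_\mu$ in terms of separated sets, exactly as in the analogous computations in \cite{BCFT} and \cite{CT16}.

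The step I expect to be the main obstacle is the last one: carefully relating the entropy of the limit measure $\mu$ to the separation and counting data, given that $\nu_l$ is supported only on the length-$T_l$ prefixes rather than the full orbit segments. This is where one needs the bookkeeping with transition times bounded between $M$ and $C_l\delta_l + M$, the estimate $k_l \delta_l^2 \to 0$ (ensuring the accumulated transition error $k_l(C_l\delta_l + M)$ is negligible compared to $S_l$ — more precisely one wants $\delta_l/T_l$ and $M/(C_lT_l)$ small), and Lemma \ref{refscale} to control that the uniform measures genuinely see the entropy at scale $2\epsilon$. Once the limit point is shown to be an MME, Knieper's uniqueness theorem closes the argument, and since the argument applies to every weak$^\ast$ limit point, the whole sequence converges to $\mukbm$.
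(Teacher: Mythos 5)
The entropy lower bound at the heart of your argument is incorrect: you claim that $C_l \to \infty$ gives $\tfrac{T_l}{C_lT_l+M} \to 1$, but in fact $\tfrac{T_l}{C_lT_l+M} = \tfrac{1}{C_l + M/T_l} \to 0$. Consequently the exponential growth rate of $\#P_l = (\#E_l)^{k_l}$ relative to $S_l \approx k_l C_l T_l$ is
\[
\frac{\log \#P_l}{S_l} \approx \frac{k_l \, T_l h}{k_l C_l T_l} = \frac{h}{C_l} \longrightarrow 0,
\]
not $h$. This is not a cosmetic slip: the construction deliberately loops around each closed geodesic $C_l \to \infty$ times, which makes the set $P_l$ highly redundant over the time window $S_l$ and kills its growth rate. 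So no argument that tries to read off entropy $\geq h$ from the separation and cardinality data of $P_l$ over length $S_l$ can work, and the Misiurewicz/Katok route you propose for $\tilde\nu_l$ fails at the first step. Separately, the claim that the window-averaged measure $\tilde\nu_l$ is ``close to $\nu_l$ in a Cesàro sense'' is not a usable statement as written: a Cesàro average of translates of a measure need not converge to the same limit as the original, especially when the mass of $\nu_l$ sits on a vanishing fraction $T_l/S_l \to 0$ of the orbit segment.

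The observation you are missing, and the one the paper uses, is that the initial $T_l$-prefix of $\pi_l(\ul x)$ shadows the entire closed orbit of $x_1$ (whose period is $\approx T_l$), so the length-$T_l$ empirical measure along $\pi_l(\ul x)$ is already close to the full-orbit empirical measure of a closed geodesic. Averaging over $\ul x$, this makes $\nu_l$ comparable to the auxiliary measure $\mu_l^* = \tfrac{1}{\#E_l}\sum_{v\in E_l} \tfrac{1}{T_l}L(v,T_l)$, and the quantitative comparison is exactly Lemma \ref{refscale} (one shows $|\int f\,d\nu_l - \int f\,d\mu_l^*| \leq \omega(f,T_l,\eps,\CCC(3\eta/4))/T_l \to 0$). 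The entropy argument is then run on $E_l$, not $P_l$: $E_l$ is $(T_l,4\eps)$-separated with $\tfrac{1}{T_l}\log \#E_l \to h$ (this uses \eqref{ETlowerbound}, $\beta(\delta_l) \approx e^{-hT_{\delta_l}}$, and $T_{\delta_l}/T_l \to 0$ from Hypothesis \ref{condstar}), and the second half of the variational principle gives $\mu_l^* \to \mukbm$ directly. The invariance step in your proposal is fine and is the same as the paper's.
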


\begin{proof}
It is convenient to define another sequence of probability measures on $T^1M$, $\{\mu_l^{*}\}_{l\in \mathbb{N}}$ by
$\mu_l^{*}=\frac{1}{\# E_l}\sum_{v \in E_l}\frac{1}{T_l} L(v, T_l)$. We show that $\mu_l^*$ converges to $\mu_{\text{KBM}}$ when $l\rightarrow \infty$. It is not hard to observe that any $\text{weak}^*$ limit measure of $\mu_l^*$ is $g_t$-invariant for all $t$ since $\delta_l\downarrow 0$ and $T_l\uparrow \infty$. Recall that $T_{\delta_l}$ is the time defined at \eqref{betabound}. We know $E_l$ is $(T_l, 2\epsilon)$-separated, $h(2\epsilon)=h$ and 
$$
\begin{aligned}
\liminf_{l\rightarrow \infty}\frac{1}{T_l}\log \#E_l&\geq \lim_{l\rightarrow \infty}\frac{1}{T_l}\log(\frac{\beta(\delta_l)}{2T_l}e^{T_lh})  \geq \lim_{l\rightarrow \infty}\frac{1}{T_l}\log(\frac{e^{-T_{\delta_l}h}}{2CT_l}e^{T_lh}).
\end{aligned}
$$ 
Observe from the proof of Lemma \ref{T0} that $T_0(\delta_l,\eta)= T_{\delta_l}$ for $\l$ sufficiently large. Thus by Hypothesis \ref{condstar}, for any small $\epsilon'>0$, $T_{\delta_l}/T_l<\epsilon'$ for large enough $l$, so $e^{-T_{\delta_l}h}> e^{-\epsilon'T_lh}$. Thus
\[
\lim_{l\rightarrow \infty}\frac{1}{T_l}\log(\frac{e^{-T_{\delta_l}h}}{2CT_l}e^{T_lh}) \geq \lim_{l\rightarrow \infty}\frac{1}{T_l}\log(\frac{e^{(1-\epsilon')T_lh}}{2CT_l})=(1-\epsilon')h.
\]
From the choice of $\epsilon'$, it follows that $\lim_{l\rightarrow \infty}\frac{1}{T_l}\log \#E_l  = h$. The proof of the second half of variational principle in \cite{pW82} will imply that $\mu_l^{*}$ converges to $\mu_{\text{KBM}}$ in the $\text{weak}^*$-topology. Therefore, to prove the statement in the lemma, it suffices to show that for any $f\in C(T^1M)$, we have $\lim_{t\rightarrow \infty}\int fd\mu_{l}^*=\lim_{t\rightarrow \infty}\int fd\nu_l$.

Notice that for a fixed $x_1\in E_l$ and any $(x_2, \ldots, x_{k_l}) \in E_l^{k_l-1}$, we have 
\begin{equation} \label{convergencelemma1}
|F(\pi_l(x_1, \ldots, x_{k_l}), T_l) - F(x_1, T_l)| \leq \omega(f, T_l, \eps, \CCC(3\eta/4)).
\end{equation}
Here the scale $3\eta/4$ is by property (2) in the choice of $\delta_0$.

Averaging over all $(x_2, \ldots, x_{k_l})$, we have
\begin{equation}  \label{convergencelemma2}
    \begin{aligned}
      &\left | \frac{1}{(\# E_l)^{k_l-1}} \sum_{x_2, \ldots, x_{k_l}} (F( \pi_l(x_1, x_2, \ldots, x_{k_l}), T_l) - F(x_1, T_l)) \right | \\
      &\leq \frac{1}{(\# E_l)^{k_l-1}} \sum_{x_2, \ldots, x_{k_l}} \left | F( \pi_l(x_1, x_2, \ldots, x_{k_l}), T_l) - F(x_1, T_l) \right | \\
    &\leq \frac{\omega{(f,T_l, \eps, \CCC(3 \eta/4))}}{T_l}
    \end{aligned}
\end{equation}
where the second inequality follows from \eqref{convergencelemma1}. On the other hand, it is not hard to show that
\begin{equation} \label{convergencelemma3}
\begin{aligned}
\lim_{l\rightarrow \infty}|\int fd\mu_{l}^*-\int fd\nu_l| 
=\lim_{l\rightarrow \infty}\left |\frac{1}{\#E_l}\sum_{x_1\in E_l} V_l(x_1) \right|
\end{aligned}
\end{equation}
where the variation term $V_l(x_1)$ is defined as

$$V_l(x_1):=\frac{1}{(\# E_l)^{k_l-1}}  \sum_{x_2, \ldots, x_{k_l}} (F( \pi_l(x_1, x_2, \ldots, x_{k_l}), T_l) - F(x_1, T_l)).
$$
By \eqref{convergencelemma2}, for each $x_1 \in E_l$ we have $|V_l(x_1)|\leq \frac{\omega{(f,T_l, \eps, \CCC(3 \eta/4))}}{T_l}$. By plugging this into \eqref{convergencelemma3}, observing $\epsilon<\delta_0$ by property (3) of $\delta_0$ and applying Lemma \ref{refscale}, we get
$$
\lim_{l\rightarrow \infty}|\int fd\mu_{l}^*-\int fd\nu_l|\leq  \lim_{l\rightarrow \infty} \frac{\omega{(f,T_l, \eps, \CCC(3 \eta/4))}}{T_l}=0,
$$
which concludes the proof of the lemma.
\end{proof}

\subsection{Variance}
Given a function $f\in C(T^1M)$, we consider $F( \cdot, T_l): E_l \to \RR$ defined in the obvious way, i.e. $F(v, T_l) = \int_0^{T_l} f(g_t v) dt$ for $v \in E_l$, and we consider the variances
\[
\sigma^2_l := \sigma^2_{m_l} (F(\cdot, T_l)) = \frac{1}{\# E_l}\sum_{x \in E_l}\left (F(x,T_l)-\frac{1}{ \#E_l}\sum_{x \in E_l} F(x,T_l) \right )^2.
\]
Terms of the form $k_l \sigma_l^2$  appear in our version of the Lindeberg condition.
We let 
\[
Q_l:=\left \lfloor \frac{(T_l-\delta_l)C_l}{T_l} \right \rfloor-1.
\] 
The interpretation of this constant is that it is chosen so that if we spend  $Q_l T_l$ time looping around one of the closed geodesics then we have definitely not exceeded $C_l$ times the actual length of the geodesic, which is the time at which we move on to approximating the next closed geodesic.  For fixed $l\geq 2$ and each $p\in [1,k_l]$, we let $t_p = (p-1)(C_lT_l+M)$. For $q\in [0,Q_l-1]$, we define a family of function $F^l_{p,q}$ by averaging $f$ over the time interval $[t_p + qT_l, t_p +(q+1) T_l]$, that is
\[
F^l_{p,q}(v)  : = F(g_{t_p}v, [qT_l, (q+1) T_l]) = F(g_{t_p+qT_l}v, T_l) = \int_{t_{p}+q T_l}^{t_p + (q+1)T_l} f(g_t v) dt.
\]
We also consider this function summed over the range of $q$. Note that
\[
\sum_q F^l_{p,q}(v) = \sum_{q=0}^{Q_l-1}  \int_{t_{p}+q T_l}^{t_p + (q+1)T_l} f(g_t v) dt = \int_{t_{p}}^{t_p + Q_lT_l} f(g_t v) dt.
\]
Thus, we define $F^l_p$ by averaging $f$ over the time interval $[t_p, t_p+Q_lT_l]$, that is
\[
 F^l_p := F(g_{t_p}v, Q_lT_l)
 \]
We define 
\[
s^2_l=\sum_{p} \sigma^2_{\nu_l}( F_{p}^{l}) = \sum_{p} \sigma^2_{\nu_l}(\sum_q F_{p,q}^{l}),
\]
where $1\leq p\leq k_l$, $0\leq q\leq Q_l-1$. This quantity is the relevant variance quantity for the measures $\nu_l$, recording the sum of variances of each prescribed closed geodesic for $Q_l$ times. To emphasize what goes into this variance quantity, we observe that it can be easily computed that 
\[
\begin{aligned}
 \sigma^2_{\nu_l}( F_{p}^{l}) & =  \frac{1}{ \#P_l} \sum_{y \in P_l} \frac{1}{T_l}  \int_0^{T_l} \left (\int_{t_p}^{t_p+Q_lT_l} f(g_{s+t}y)ds \right )^2 dt \\ &- \left (\frac{1}{ \#P_l} \sum_{y \in P_l} \frac{1}{T_l} \int_0^{T_l} \int_{t_p}^{t_p+Q_lT_l} f(g_{s+t} y) ds dt \right )^2,
\end{aligned}
\]
and summing the above expression over $p$ from $1$ to $k_l$ gives $s_l^2$.

\subsection{Basic estimates}
We have the following comparison between averages along the total number of loops around a fixed $x_i$, and the corresponding orbit segment along $\pi((x_1, \ldots, x_{k_l}))$.

\begin{lemma} \label{bestimate1}
For a H\"older continuous potential function $f$, any $\ul x \in E_l^{k_l}$, any $t\in [0,T_l]$ and $p \in \{1, \ldots, k_l\}$, there exists $K=K(f)$ such that
\[
|F^l_p (g_t(\pi_l(\ul x))) - Q_l F( x_{p}, T_l) | \leq 2 KT_l + (2\kappa \eps +  2 \delta_l Q_l) \| f \|.
\]
\end{lemma}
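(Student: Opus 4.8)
The plan is to realize the orbit segment over which $F^l_p(g_t(\pi_l(\ul x)))$ integrates as a small perturbation of $Q_l$ consecutive loops around the periodic orbit $\gamma(x_p)$, and then to read off the estimate by accounting for a bounded number of short ``boundary'' mismatches. Throughout write $z := \pi_l(\ul x)$, $w := g_{t_p}(z)$, and let $\tau_p := t(x_p) \in (T_l - \delta_l, T_l]$ denote the least period of $\gamma(x_p)$. By the specification construction, $d(g_s w, g_s x_p) < \eps$ for all $s \in [0, C_l\tau_p]$, where $g_s x_p$ winds around $\gamma(x_p)$.

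First I would pin down the relevant time window. Since $F^l_p(g_t z) = \int_t^{t+Q_lT_l} f(g_s w)\,ds$ with $t \in [0, T_l]$, the whole window lies inside $[0, C_l\tau_p]$: the definition of $Q_l$ gives $(Q_l+1)T_l = \lfloor (T_l-\delta_l)C_l/T_l\rfloor\, T_l \le (T_l-\delta_l)C_l < \tau_p C_l$, and in particular $Q_l < C_l$. I would then replace this window by the period-aligned window $[0, Q_l\tau_p]$ with the same base point $w$: since $[t, t+Q_lT_l]$ and $[0, Q_l\tau_p]$ differ by a set of measure at most $t + (t + Q_l(T_l - \tau_p)) \le 2T_l + Q_l\delta_l$ (using $0 \le T_l - \tau_p < \delta_l$), this replacement costs at most $(2T_l + Q_l\delta_l)\|f\|$.

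The heart of the argument is the comparison $|F(w, Q_l\tau_p) - F(x_p, Q_l\tau_p)|$. The key observation is that $(w, Q_l\tau_p) \in \CCC(3\eta/4)$: both endpoints $w$ and $g_{Q_l\tau_p}w$ lie within $\eps$ of $x_p$ (for the second one using periodicity together with $Q_l\tau_p \le C_l\tau_p$), so property (2) in the choice of $\delta_0$, combined with $\eps < \delta_0$ and $\lambda(x_p) \ge \eta$, forces $\lambda > 3\eta/4$ there; moreover $x_p \in B_{Q_l\tau_p}(w, \eps) \subset B_{Q_l\tau_p}(w, \delta_0)$. Running the argument of Lemma \ref{refscale} at the finer scale $\eps$ (the local product structure still applies, with the same constant $\kappa$), this difference is bounded by the central-direction slippage $\kappa\eps\|f\|$ plus the stable and unstable variations of $f$ along the segment; since $f$ is H\"older and stable/unstable distances contract/expand at a uniform exponential rate along $\CCC(\eta)$, these variations sum to a constant $K = K(f)$ \emph{independent of the length} $Q_l\tau_p$.

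To finish, periodicity gives $F(x_p, Q_l\tau_p) = Q_l F(x_p, \tau_p)$, and $|Q_l F(x_p, \tau_p) - Q_l F(x_p, T_l)| \le Q_l(T_l - \tau_p)\|f\| \le Q_l\delta_l\|f\|$. Adding the three contributions gives $|F^l_p(g_t z) - Q_l F(x_p, T_l)| \le 2T_l\|f\| + 2\delta_l Q_l\|f\| + \kappa\eps\|f\| + K(f)$, and absorbing $2T_l\|f\| + K(f)$ into a single term $2KT_l$ (legitimate after enlarging $K$, using $T_l \ge 1$) yields the claimed inequality. I expect the only genuinely delicate point to be this length-independent control of the shadowing error: one must resist the temptation to split the $Q_l$ loops apart and estimate each one, which would lose a factor of $Q_l$, and instead treat them as a single orbit segment of $\CCC(3\eta/4)$ — which is legitimate precisely because $\lambda$ only needs to be controlled at its two endpoints, even though it may be small in between.
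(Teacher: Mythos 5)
Your proof is correct and reaches the claimed inequality, but by a genuinely different decomposition than the paper's. The paper keeps the window $[t, t+Q_lT_l]$ intact, introduces a shadow point $u_p$ from local product structure, and splits into stable, flow-shift, unstable, and periodicity contributions; the periodicity term $|F(g_t x_p, Q_lT_l) - Q_lF(x_p, T_l)| \le 2\delta_l Q_l \|f\|$ absorbs both the $t$-shift and the $T_l$-versus-$\tau_p$ mismatch in one stroke, using that $x_p$ is $\tau_p$-periodic so interior shifts cancel. You instead translate the window to the period-aligned $[0, Q_l\tau_p]$ first, paying roughly $2T_l\|f\| + Q_l\delta_l\|f\|$, after which periodicity is trivial ($F(x_p, Q_l\tau_p) = Q_l F(x_p, \tau_p)$) and you compare $F(w,\cdot)$ with $F(x_p,\cdot)$ directly over the aligned segment. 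Your stable/unstable estimate is also sharper than the paper's: you integrate the exponential decay of the H\"older increments to get a \emph{length-independent} constant, whereas the paper bounds each $T_l$-block by $T_l$ times the maximal pointwise increment and then sums the geometric series, producing a bound of order $T_l$. The final $2KT_l$ therefore has a different origin in each argument — the stable plus unstable terms in the paper's, the window-shift cost plus the absorbed shadowing constant in yours — but both are legitimate. Your explicit verification that $(w, Q_l\tau_p) \in \CCC(3\eta/4)$ by checking $\lambda$ at the two endpoints (via property (2) of $\delta_0$, shadowing at scale $\eps<\delta_0$, and $\lambda(x_p)\geq\eta$) correctly isolates the hypothesis the shadow construction actually needs; the paper uses this implicitly when it invokes the proof of Lemma \ref{refscale}.
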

\begin{proof}
Let $\ul x = (x_1, \ldots, x_{k_l}) \in E_l^{k_l}$, and let $z = \pi_l(\ul x)$, and we fix $1\leq p \leq k_l$. By construction, we have
\[
d_{C_lt(x_p)}(g_{(p-1)(C_lT_l+M)}z,x_{p})<\epsilon.
\] 
Recall that for each $x \in E_l$, we have $(x, t(x)) \in \CCC(\eta)$, where $t(x) \in [T_l-\delta_l, T_l]$ is the least period of the periodic orbit defined by $x$. 
In particular, since each lap round such a periodic orbit will carry a definite amount of hyperbolicity, the distance between the orbit of $(g_{(p-1)(C_lT_l+M)}z, C_lT_l)$ and $(x_{p},C_lT_l)$ is much smaller than $\epsilon$ when $l$ is large. This is the key idea in getting the desired estimate.

More precisely, for each $l\geq 2$, $1\leq p \leq k_l$ and $\ul x \in E_l^{k_l}$, following the proof of Lemma \ref{refscale} we know there is some $u_{p}=u_{p}(\ul x)$ such that $u_{p}\in T^1M$ and 

$$
g_{s} u_{p} \in W^s_{\kappa \epsilon}(g_{s+t_p}z\cap W^{cu}_{\kappa \epsilon}(g_{s} x_{p})
$$
for all $s\in [0,C_lt_{p+1}]$. In particular, it holds for all $s\in [0,(Q_l+1)T_l]$ by the definition of $Q_l$. There is $s_{p}=s_{p}(\ul x)$ such that $s_{p}\in [-\kappa \epsilon,\kappa \epsilon]$ and $g_{s_{p}+s} u_{p}\in W_{\kappa \epsilon}^u(g_{s} x_{p})$. Fix such $l$, $p$ and for any $t\in [0,T_l]$ we want to control 
\[
|F_{p}^l(g_tz)-Q_lF(x_{p},T_l)|,
\] 
which is bounded above by the sum of the following four terms

\begin{enumerate}
\item  $|F_{p}^l(g_tz)-F(g_tu_{p},Q_lT_l)|$
\item $|F(g_tu_{p},Q_lT_l)-F(g_{t+s_{p}}u_{p},Q_lT_l)|$
  \item $|F(g_{t+s_{p}}u_{p},Q_lT_l))-F(g_tx_{p},Q_lT_l)|$
  \item $|F(g_tx_{p},Q_lT_l)-Q_lF(x_{p},T_l)|$.
 \end{enumerate}
Let us analyze these four terms. We begin with the first term. Suppose $f$ satisfies $|f(x)-f(y)|\leq L_0 d(x,y)^{\alpha}$. We know for any $u,v \in T^1M$, $|\lambda(u)-\lambda(v)|<\frac{\eta}{4}$ whenever $d(u,v)<\delta_0$. Therefore, for each $0\leq q \leq Q_l-1$, by Lemma 3.10 in \cite{BCFT} we have
\[
\begin{aligned}
|F_{p, q}^l(g_tz)-F(g_tu_{p},[qT_l,(q+1)T_l])|
&\leq \int_{qT_l}^{(q+1)T_l}|f(g_{t+s+t_p}z)-f(g_{t+s}u_{p})|ds \\
&\leq L_0\int_{qT_l}^{(q+1)T_l} d(g_{t+s+t_p}z,g_{t+s}u_{p})^{\alpha}ds \\
&\leq L_0T_ld(g_{qT_l+t+t_p}z,g_{qT_l+t}u_{p})^{\alpha} \\ &\leq  L_0T_l\kappa\epsilon e^{-\frac{qT_l\eta\alpha}{2}}.
\end{aligned}
\]
We obtain
\[
    \begin{aligned}
    &|\sum_{q=0}^{Q_l-1}F_{p,q}^l(g_tz-F(g_tu_{p},Q_lT_l)| \leq \sum_{q=0}^{Q_l-1}L_0T_l\kappa\epsilon e^{-\frac{qT_l\eta\alpha}{2}} \leq \frac{L_0T_l\kappa \epsilon}{1-e^{-\frac{T_l\eta\alpha}{2}}} \leq KT_l,
    \end{aligned}
\]
where $K:=\frac{L_0\kappa \epsilon}{1-e^{-\frac{\eta \alpha}{2}}}$ is a constant (Recall that $T_l>1$ for all $l$). This gives an upper bound on the first term.

The above argument can be repeated along the unstable direction to control the third term. We get 
\[
    |F(g_{t+s_{p}}u_{p},[qT_l,(q+1)T_l]))-F(g_tx_{p},[qT_l,(q+1)T_l])|
    \leq L_0T_l\kappa\epsilon e^{-\frac{(Q_l-1-q)T_l\eta\alpha}{2}},
\]
and thus
\[
   \begin{aligned}
  |F(g_{t+s_{p}}u_{p},Q_lT_l))-F(g_tx_{p},Q_lT_l)| & \leq \sum_{q=0}^{Q_l-1}L_0T_l\kappa\epsilon e^{-\frac{(Q_l-1-q)T_l\eta\alpha}{2}} \\ &\leq \frac{L_0T_l\kappa \epsilon}{1-e^{-\frac{T_l\eta\alpha}{2}}} \leq KT_l
    \end{aligned}
\]
 To estimate the second term, we observe that

\[
    |F(g_tu_{p},Q_lT_l)-F(g_{t+s_{p}}u_{p},Q_lT_l)|\leq 2||f|| s_{p}\leq 2\kappa \epsilon ||f||.
\]
To estimate the fourth term, we observe that $|F(g_tx_{p},Q_lT_l)-Q_lF(x_{p},T_l)|$ is bounded above by
\[
  \sum_{q=0}^{Q_l-1}\left |\int_{qT_l}^{(q+1)T_l}f(g_{t+s}x_{p})ds-\int_0^{T_l}f(g_tx_{p})dt \right| \leq 2\delta_l Q_l ||f||.
\]
where the last inequality follows because $x_{p}$ has period $t(x_{p}) \in [T_l-\delta_l,T_l]$. By summing the estimates on these four terms, the lemma is proved.
\end{proof}
We also have the following basic comparison between integrals using $\nu_l$ and $m_l$.
\begin{lemma} \label{bestimate2}
We have
\begin{equation}    
    \begin{aligned}
    |\int F_{p}^l d \nu_l-Q_l \int F(\cdot,T_l) d m_l | \leq 2KT_l+2\kappa \epsilon \|f\|+2\delta_l Q_l \|f\|.
    \end{aligned}
\end{equation}
\end{lemma}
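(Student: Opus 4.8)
The goal is to prove Lemma~\ref{bestimate2}, which compares $\int F_p^l\, d\nu_l$ with $Q_l \int F(\cdot, T_l)\, dm_l$. The plan is to reduce this to Lemma~\ref{bestimate1} by averaging the pointwise estimate there against the appropriate measures.

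First I would unravel the definition of $\nu_l$. Recall $\nu_l = \frac{1}{\# E_l^{k_l}} \sum_{\ul x \in E_l^{k_l}} \frac{1}{T_l} L(\pi_l(\ul x), T_l)$, so that
\[
\int F_p^l\, d\nu_l = \frac{1}{\# E_l^{k_l}} \sum_{\ul x \in E_l^{k_l}} \frac{1}{T_l} \int_0^{T_l} F_p^l(g_t(\pi_l(\ul x)))\, dt.
\]
Similarly, $Q_l \int F(\cdot, T_l)\, dm_l = \frac{1}{\# E_l}\sum_{x \in E_l} Q_l F(x, T_l)$, and since $\nu_l$ and $m_l$ are built from products over the same set $E_l$, I can rewrite $Q_l \int F(\cdot, T_l)\, dm_l = \frac{1}{\# E_l^{k_l}} \sum_{\ul x \in E_l^{k_l}} \frac{1}{T_l}\int_0^{T_l} Q_l F(x_p, T_l)\, dt$, where $x_p$ is the $p$-th coordinate of $\ul x$ (the integrand is constant in $t$, and the average over $E_l^{k_l}$ of a quantity depending only on $x_p$ equals the average over $E_l$). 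Subtracting, the difference becomes
\[
\left| \int F_p^l\, d\nu_l - Q_l \int F(\cdot, T_l)\, dm_l \right| \leq \frac{1}{\# E_l^{k_l}} \sum_{\ul x \in E_l^{k_l}} \frac{1}{T_l} \int_0^{T_l} \left| F_p^l(g_t(\pi_l(\ul x))) - Q_l F(x_p, T_l) \right| dt.
\]
Now apply Lemma~\ref{bestimate1}: for every $\ul x$, every $t \in [0, T_l]$ and every $p$, the integrand is bounded by $2KT_l + (\kappa\epsilon + 2\delta_l Q_l)\|f\|$. Since this is a uniform bound, it survives averaging over $t \in [0, T_l]$ (dividing by $T_l$) and over $\ul x \in E_l^{k_l}$, giving exactly $2KT_l + \kappa\epsilon\|f\| + 2\delta_l Q_l \|f\|$, which is the claimed inequality.

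There is essentially no obstacle here: the only point requiring a little care is the bookkeeping that lets one pass from the product measure $\nu_l$ on $E_l^{k_l}$ to the factor measure $m_l$ on $E_l$ — specifically, that averaging a function of $x_p$ alone over the product set reproduces the $m_l$-average, and that inserting the harmless $\frac{1}{T_l}\int_0^{T_l}(\cdot)\, dt$ against a $t$-independent integrand changes nothing. Once the two quantities are written over the common index set $E_l^{k_l}$ and the common auxiliary integral $\frac{1}{T_l}\int_0^{T_l}$, the triangle inequality and the already-established pointwise bound of Lemma~\ref{bestimate1} finish the argument immediately.
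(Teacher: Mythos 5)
Your proof is correct and is essentially identical to the paper's argument: both rewrite the difference as an average over $\ul x \in E_l^{k_l}$ and $t \in [0,T_l]$ of the pointwise quantity $F_p^l(g_t(\pi_l(\ul x))) - Q_l F(x_p, T_l)$, and then invoke the uniform bound from Lemma~\ref{bestimate1}. The bookkeeping you spell out (marginalizing a coordinate of the product measure and inserting a trivial time average) is exactly what the paper leaves implicit.
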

\begin{proof}
The expression $|\int F_{p}^l d \nu_l-Q_l \int F(\cdot,T_l) d m_l|$ can be rewritten as 
\[
    \frac{1}{(\# E_l)^{k_l}} \left |\sum_{\ul x \in E_l^{k_l}}\left (\frac{1}{T_l}\int_0^{T_l}F_{p}^l(g_s(\pi_l(\ul x))ds-Q_lF(x_{p},T_l) \right) \right |. 
\]

The results follows using Lemma \ref{bestimate1}. 
\end{proof}

\section{Main Theorem} \label{s.main}

Recall $L_{\nu}(h,c)$ is the Lindeberg function from Definition \ref{Lind}.
\begin{theorem}  \label{main}
Let $(\nu_l)_{l \in \NN}$ be a sequence of measures as constructed in the previous section.
Suppose $f\in C(T^1M)$ is H\"older continuous with
\begin{equation} \label{posvar}
\begin{aligned}
\liminf_{l\rightarrow \infty}\sigma_l^2 > 0.
\end{aligned}
\end{equation}
Then the Lindeberg-type condition
\begin{equation} \label{Lindeberg}
\begin{aligned}
\lim_{l\rightarrow \infty}\frac{\sum_{1\leq p\leq k_l, }L_{\nu_l}(F^l_p,\gamma s_l)}{s_l^2}=0 
\end{aligned}
\end{equation}
 for any $\gamma>0$, implies that for all $a\in \mathbb{R}$, 
\begin{equation} \label{LCLT}
\lim_{l\rightarrow \infty}\nu_l \left ( \left \{v: \frac{F(v, k_l(C_lT_l+M)) - \int F(\cdot,k_l(C_lT_l+M))d \nu_l}{s_l}\leq a \right \} \right )=N(a),
\end{equation}
 where $N$ is the cumulative distribution function of the normal distribution $\mathcal{N}(0,1)$. Conversely, under the hypothesis \eqref{posvar}, \eqref{LCLT} implies \eqref{Lindeberg}.
\end{theorem}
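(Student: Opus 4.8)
The plan is to reduce the statement to the Lindeberg CLT for independent random variables by exhibiting the relevant ergodic integral, up to a negligible error, as a sum of independent random variables on the parameter space $E_l^{k_l}$ equipped with the product measure $\mu_l$. The natural candidates are $X^l_p(\ul x) := Q_l F(x_p, T_l)$, $p = 1, \dots, k_l$, which are genuinely independent under $\mu_l$ since they depend on distinct coordinates, each with the same distribution. Their variances are $Q_l^2 \sigma_l^2$, so the normalizing constant for the i.i.d. version is $\widetilde s_l^2 := k_l Q_l^2 \sigma_l^2$. I would first verify, using Lemma \ref{bestimate1} and Lemma \ref{bestimate2}, that $\sum_p F^l_p(g_t \pi_l(\ul x))$ — which via the identity $\sum_q F^l_{p,q} = F^l_p$ and the definition of $\nu_l$ is what the numerator of \eqref{LCLT} essentially computes, once we account for the relationship between $k_l(C_lT_l+M)$ and $k_l Q_l T_l$ plus bounded transition terms — differs from $\sum_p X^l_p(\ul x)$ by an error that is $O(k_l(K T_l + \kappa\epsilon\|f\| + \delta_l Q_l \|f\|))$ uniformly in $\ul x$ and in $t \in [0, T_l]$. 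The point of Hypothesis \ref{condstar}, specifically $\frac{\sqrt{k_l} T_l}{C_l} \downarrow 0$ and $k_l \delta_l^2 \downarrow 0$, together with the positive variance assumption \eqref{posvar}, is precisely that this error, divided by $\widetilde s_l \sim \sqrt{k_l}\, Q_l\, \liminf \sigma_l$ (and $Q_l \sim C_l$), tends to zero; so the two normalized quantities have the same limiting distribution by Slutsky's theorem.

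Next I would show that $s_l^2$ and $\widetilde s_l^2$ are comparable, more precisely $s_l^2 / \widetilde s_l^2 \to 1$. Here $s_l^2 = \sum_p \sigma^2_{\nu_l}(F^l_p)$ and each $\sigma^2_{\nu_l}(F^l_p)$ should be close to $Q_l^2 \sigma_l^2$: this again follows from Lemma \ref{bestimate1} applied inside the variance, using that for a random variable $Y$ close to $X^l_p$ in sup norm by an amount $\delta$, $|\sigma^2(Y) - \sigma^2(X^l_p)| \le 2\delta \|X^l_p\|_\infty + \delta^2$, and controlling $\|F^l_p\|_\infty \lesssim Q_l T_l \|f\|$; dividing through by $\widetilde s_l^2$ and invoking Hypothesis \ref{condstar} and \eqref{posvar} gives the ratio. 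Once $s_l^2 \sim \widetilde s_l^2$, I would translate the Lindeberg-type condition \eqref{Lindeberg} on $(\nu_l, F^l_p, s_l)$ into the classical Lindeberg condition \eqref{originalLind} on $(\mu_l, X^l_p, \widetilde s_l)$: the sets $Z(\gamma s_l, F^l_p, \nu_l)$ and the corresponding threshold sets for $X^l_p$ under $\mu_l$ differ only by the uniformly small error just discussed, and the threshold $\gamma s_l$ can be inflated or deflated by a factor tending to $1$; so \eqref{Lindeberg} holds for every $\gamma > 0$ if and only if the classical Lindeberg condition holds. Applying the Lindeberg CLT for independent random variables then yields that $\frac{1}{\widetilde s_l}\sum_p (X^l_p - \int X^l_p d\mu_l)$ converges in distribution to $\mathcal N(0,1)$, and by the Slutsky-type comparison from the first paragraph this transfers to \eqref{LCLT}. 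The converse direction runs the implications backward: \eqref{LCLT} plus \eqref{posvar} forces the classical CLT for the $X^l_p$, and by the Lindeberg theorem's near-converse (for i.i.d. arrays where no single term dominates, which is automatic here since the summands are identically distributed and $k_l \to \infty$, giving the Feller/negligibility condition), this is equivalent to the classical Lindeberg condition, which then pulls back to \eqref{Lindeberg}.

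The main obstacle I expect is the bookkeeping in the first step: carefully identifying what $F(v, k_l(C_lT_l+M))$ integrated against $\nu_l$ actually equals, since $\nu_l$ only distributes mass along the first $T_l$ of each specified orbit segment, so $\int F(\cdot, k_l(C_lT_l+M)) d\nu_l$ is really $\frac{1}{\#E_l^{k_l}} \sum_{\ul x} \frac{1}{T_l}\int_0^{T_l} F(g_s \pi_l(\ul x), k_l(C_lT_l+M))\, ds$, and one must split the orbit segment of length $k_l(C_lT_l+M)$ into the $k_l$ blocks, each of which is $C_l T_l + M$ long, recognize that within each block the orbit $\epsilon$-shadows $C_l$ laps around $\gamma(x_p)$, keep only the first $Q_l$ laps ($Q_l T_l \le (T_l-\delta_l)C_l$ by choice of $Q_l$) to stay safely inside, and absorb the leftover laps, the transition times (bounded by $C_l\delta_l + M$), and the initial shift $s \in [0,T_l]$ into the error term — then check that \emph{all} of these contributions are $o(\widetilde s_l)$ using the three relations in Hypothesis \ref{condstar}. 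The estimate $\delta_l Q_l \|f\| = o(\sqrt{k_l}\, Q_l\, \sigma_l\,)$, i.e. $\sqrt{k_l}\,\delta_l \to 0$, is exactly $k_l\delta_l^2 \to 0$, and $K T_l k_l = o(\sqrt{k_l} Q_l \sigma_l)$ reduces to $\sqrt{k_l} T_l / C_l \to 0$, so the hypotheses are tailored to make this work; the care is in not losing a factor of $k_l$ or $Q_l$ anywhere. A secondary subtlety is justifying the Feller negligibility condition for the converse; but since the $X^l_p$ are i.i.d.\ for fixed $l$ with $k_l \to \infty$, $\max_p \sigma^2(X^l_p)/\widetilde s_l^2 = 1/k_l \to 0$, so this is immediate and the converse half of the Lindeberg theorem applies cleanly.
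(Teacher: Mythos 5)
Your proposal is correct and follows essentially the same route as the paper: reducing to the classical Lindeberg CLT for the independent random variables $\ul x \mapsto Q_l F(x_p, T_l)$ on $(E_l^{k_l}, \mu_l)$, using Lemmas~\ref{bestimate1} and~\ref{bestimate2} to control the pointwise error between $\sum_p F^l_p(g_t \pi_l(\ul x))$ and $Q_l \sum_p F(x_p, T_l)$, showing $s_l^2 \sim k_l Q_l^2 \sigma_l^2$ (the paper's Lemmas~\ref{keylemma} and~\ref{keylemmacor}), transferring both the CLT conclusion and the Lindeberg condition between $\nu_l$ and $\mu_l$ (the paper's Lemmas~\ref{conclusionmuiffnu} and~\ref{hypmuiffnu}), and noting that asymptotic negligibility is automatic since the summands are identically distributed with $k_l \to \infty$. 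The paper packages the classical Lindeberg equivalence as Theorem~\ref{mainonmu} via Denker–Senti–Zhang's Proposition~3.3, but the content is the same as your appeal to the Lindeberg theorem with the Feller condition.
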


We also show in Lemma \ref{s comparison} that the conclusion \eqref{LCLT} is equivalent to
\begin{equation} 
\lim_{l\rightarrow \infty}\nu_l\left ( \left \{v: \frac{F(v, k_l(C_lT_l+M)) - k_l(C_lT_l+M) \int f d \nu_l}{\sigma_{\nu_l}(F(\cdot, k_l(C_lT_l+M)))}\leq a \right \} \right )=N(a),
\end{equation}
which has the advantage of being in the most elementary possible terms.

The proof of Theorem \ref{main} is based on comparing $\mu_l$ and $\nu_l$. The measures $\mu_l$ are product measures supported on $E_l^{k_l}$ (more precisely, the measures $\mu_l$ are products of the measures $m_l$ on $E_l$) and the version of the results we want can be obtained there by considering these objects as sequences of independent random variables and appealing to classical probability theory. Our main theorem is proved by showing that the relevant quantities for $\nu_l$ are comparable to corresponding quantities for $\mu_l$. The starting point for our main theorem is thus the following theorem on the sequence of measures $(\mu_l)$. We give the Lindeberg condition in terms of the uniform measure $m_l$ on $E_l$ since this is the most elementary object under consideration.

\begin{theorem} \label{mainonmu}
The condition
\begin{equation} \label{mulassump}
\begin{aligned}
\lim_{l\rightarrow \infty}\frac{L_{m_l}(F(\cdot , T_l),\gamma \sqrt{k_l} \sigma_l )}{\sigma_l^2}=0.
\end{aligned}
\end{equation}
holds for any $\gamma >0$ if any only if
\begin{equation} \label{muclt}
\begin{aligned}
\lim_{l\rightarrow \infty}\mu_{l} \left( \left \{ (v_1,\ldots, v_{k_l}) : \frac{\sum_{i=1}^{k_l}F(v_i, T_l)- k_l \int F(\cdot, T_l) d m_l}{\sqrt{k_l \sigma_l^2}}\leq a \right \} \right )=N(a).
\end{aligned}
\end{equation}
\end{theorem}
This result can be obtained formally using an analogous statement of Denker-Senti-Zhang for dynamical arrays. It can be obtained quite easily from the classical Lindeberg CLT \cite{wF68}. We give a short formal proof based on verifying Denker-Senti-Zhang's hypotheses \cite[Proposition 3.3]{DSZ18}.

\begin{proof}
We follow the terminology of  \cite[Proposition 3.3]{DSZ18}. We consider the product of $k_l$ copies of a finite set. We consider a function $G_{l,i}: E_l^{k_l} \to \mathbb R$ which depends on the $i^{th}$ component. That is,
\[
G_{l,i} (x_1, \ldots, x_{k_l}) = G_{l, i} (x_i).
\]
Let $\hat s_l^2 = \sum_{i=1}^{k_l} \sigma^2_{\mu_l}(G_{l,i})$. As stated in \cite[Proposition 3.3]{DSZ18}, it follows from Lindeberg's CLT for independent random variables that the Lindeberg condition holds
\begin{equation} \label{Lind0}
\begin{aligned}
\lim_{l\rightarrow \infty}\frac{\sum_{i=1}^{k_l} L_{\mu_l}(G_{l,i},\gamma \hat s_l)}{\hat s_l^2}=0 \text{\hspace{10pt} for any } \gamma>0
\end{aligned}
\end{equation}
 if and only if  $(G_{l,i})$ is asymptotically negligible
\begin{equation} \label{asympneg}
\begin{aligned}
\lim_{l\rightarrow \infty}\max_{1\leq i \leq k_l}{\frac{ \sigma^2_{\mu_l}(G_{l,i})}{\hat s_l^2}}=0
\end{aligned}
\end{equation}
and for all $a \in \RR$,
\begin{equation} \label{4point8}
\begin{aligned}
\lim_{l\rightarrow \infty}\mu_{l}\left ( \left\{ (x_1, \ldots, x_{k_l}): \frac{\sum_{i=1}^{k_l}G_{l,i}- \int (\sum_{i=1}^{k_l}G_{l,i}) d \mu_l}{\hat s_l}\leq a \right\}\right )=N(a).\end{aligned}
\end{equation}
For our statement, we set $G_{l,i} (x_1, \ldots, x_{k_l}) = F(x_i, T_l)$ for all $1\leq i \leq k_l$. We observe that for each $i$, we have
\[
\sigma^2_{\mu_l} (\ul x \to F(x_i, T_l)) = \sigma^2_{m_l}(F(\cdot, T_l)).
\]
This is because the first expression is
\[
\frac{1}{\# E_l^{k_l}} \sum_{\ul x \in E_l^{k_l}} \left (F(x_i, T_l) - \frac{1}{\# E_l^{k_l}} \sum_{\ul x \in E_l^{k_l}} F(x_i, T_l) \right)^2, 
\]
so using that $\# E_l^{k_l}= (\#E_l)( \#E_l^{k_l-1})$, and that 
\[
\sum_{\ul x \in E_l^{k_l}} F(x_i, T_l) =  \#E_l^{k_l-1} \sum_{x_i \in E_l} F(x_i, T_l),
\]
the result follows.
Thus, $\hat s_l^2 = k_l \sigma^2_l$. The expression $\hat s_l^{-2} \sigma^2_{\mu_l}(G_{l,i})$ in \eqref{asympneg} reduces to $\sigma^2_l/ k_l \sigma^2_l$. Thus asymptotic negligibility is trivially satisfied. The condition \eqref{Lind0} clearly simplifies to \eqref{mulassump}. We obtain the desired statement, noting in \eqref{4point8} that $\int (\sum_{i=1}^{k_l}G_{l,i}) d \mu_l =\sum_{i=1}^{k_l} \int \left (\ul x \to F(x_i, T_l) \right )d \mu_l= k_l \int F( \cdot, T_l) d m_l$.
\end{proof}
The normalization quantities in \eqref{muclt} are stated in terms of $m_l$ to keep them in the most elementary terms. We note that these quantities can be thought of as quantities depending on the product measure $\mu_l$ (rather than $m_l$) since the proof above makes clear that
\[
\int \sum_{i=1}^{k_l}F(v_i, T_l) d \mu_l = k_l \int F( \cdot, T_l) d m_l,
\]
\[
\sum_{i=1}^{k_l} \sigma^2_{\mu_l}(\ul x \to F(x_i, T_l)) = k_l \sigma_l^2.
\]
We now prove a key lemma we will need in order to use Theorem \ref{mainonmu} to describe behavior of the sequence $(\nu_l)$. For $x \in E_l$, define
\[
D_{m_l}(x) := F(x, T_l) - \int F(\cdot,T_l)d m_l.
\]
For $1\leq p \leq k_l$, $t\in [0,T_l]$ and $ \ul x\in E_l^{k_l}$,  define
\[
D_{\nu_l}( \ul x  , t;p) := F_{p}^l(g_t(\pi_l(\ul x)))-\int F_{p}^l d \nu_l, 
\]
and define
\[
\Delta_{p}^l(\ul x, t) :=D_{\nu_l}( \ul x  , t;p) - Q_lD_{m_l}(x_p).
\]
These quantities satisfy the following properties.

\begin{lemma}   \label{bestimate3}
For all $t\in [0,T_l]$ and $ \ul x \in E_l^{k_l}$, we have 
\begin{equation}    \label{deltapl}
    \begin{aligned}
    |\Delta_{p}^l(\ul x, t)|\leq 2(2KT_l+2\kappa \epsilon \| f \|+2\delta_l Q_l \| f \|),
    \end{aligned}
\end{equation}
\begin{equation}    \label{completesquare}
    \begin{aligned}
  D_{\nu_l}( \ul x  , t;p)^2 =   \Delta_{p}^l(\ul x, t)[\Delta_{p}^l(\ul x, t)+2 Q_l D_{m_l}(\ul x, p)] + Q_l^2 D_{m_l}(x_p)^2.
    \end{aligned}
\end{equation}
\end{lemma}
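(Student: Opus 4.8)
The plan is to handle the two assertions separately: \eqref{deltapl} follows by the triangle inequality from the two basic estimates already in hand, and \eqref{completesquare} is a one-line algebraic identity.

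For \eqref{deltapl}, I would first unfold the three definitions of $D_{m_l}$, $D_{\nu_l}$ and $\Delta_p^l$ and regroup the terms. Substituting into $\Delta_p^l(\ul x, t) = D_{\nu_l}(\ul x, t; p) - Q_l D_{m_l}(x_p)$ and rearranging gives
\[
\Delta_p^l(\ul x, t) = \bigl( F_p^l(g_t(\pi_l(\ul x))) - Q_l F(x_p, T_l)\bigr) - \bigl( \textstyle\int F_p^l\, d\nu_l - Q_l \int F(\cdot, T_l)\, d m_l\bigr).
\]
The first bracket is bounded in absolute value by $2KT_l + (\kappa\epsilon + 2\delta_l Q_l)\|f\|$ by Lemma \ref{bestimate1} (applied with the same $t$, $p$, $\ul x$), and the second bracket is bounded by the same quantity by Lemma \ref{bestimate2}. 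The triangle inequality then produces exactly the factor of $2$ in front of $(2KT_l + \kappa\epsilon\|f\| + 2\delta_l Q_l\|f\|)$.

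For \eqref{completesquare}, I would simply set $a := D_{\nu_l}(\ul x, t; p)$ and $b := Q_l D_{m_l}(x_p)$, so that by definition $\Delta_p^l(\ul x, t) = a - b$, equivalently $a = \Delta_p^l(\ul x, t) + b$. Squaring,
\[
a^2 = \bigl(\Delta_p^l(\ul x, t) + b\bigr)^2 = \Delta_p^l(\ul x, t)\bigl(\Delta_p^l(\ul x, t) + 2b\bigr) + b^2,
\]
which is precisely \eqref{completesquare} once one recalls that $b^2 = Q_l^2 D_{m_l}(x_p)^2$ and reads the term $D_{m_l}(\ul x, p)$ appearing in \eqref{completesquare} as shorthand for $D_{m_l}(x_p)$ (the only notation under which it is defined).

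There is essentially no analytic obstacle here: all the real work is packaged inside Lemmas \ref{bestimate1} and \ref{bestimate2}, and \eqref{completesquare} is formal. The only point needing care is bookkeeping — verifying that the pointwise comparison and the integral comparison are both organized against the common reference quantities $Q_l F(x_p, T_l)$ and $Q_l \int F(\cdot, T_l)\, dm_l$ respectively, so that $\Delta_p^l$ decomposes cleanly into the difference of the two controlled brackets above and no cross terms survive.
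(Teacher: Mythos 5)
Your proposal is correct and matches the paper's argument: the paper likewise dispatches \eqref{deltapl} as an immediate consequence of Lemmas \ref{bestimate1} and \ref{bestimate2} (your explicit regrouping into the two controlled brackets is exactly the implicit step), and proves \eqref{completesquare} by the same elementary algebra — the paper writes $a^2 = (a^2-b^2)+b^2 = (a-b)(a+b)+b^2$ while you expand $(\Delta+b)^2$, which is the identical identity. You are also right that $D_{m_l}(\ul x, p)$ in the statement is just the paper's shorthand for $D_{m_l}(x_p)$.
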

\begin{proof}
The first property follows directly from Lemmas \ref{bestimate1} and \ref{bestimate2}. The second property holds because
\[
\begin{aligned}
  D_{\nu_l}( \ul x  , t;p)^2 &=   D_{\nu_l}( \ul x  , t;p)^2  - Q_l^2 D_{m_l}(x_p)^2 + Q_l^2 D_{m_l}(x_p)^2 \\
  &= \Delta_{p}^l(\ul x, t)[ D_{\nu_l}(\ul x, t;p) + Q_lD_{m_l}(x_p)] + Q_l^2 D_{m_l}(x_p)^2 \\
  &= \Delta_{p}^l(\ul x, t)[\Delta_{p}^l(\ul x, t)+2 Q_l D_{m_l}(\ul x, p)] + Q_l^2 D_{m_l}(x_p)^2. \qedhere
\end{aligned}
\]

\end{proof}

\begin{lemma} \label{keylemma}
$\lim_{l\rightarrow \infty}\frac{\sigma^2_{\nu_l}(F^l_p)}{Q_l^2\sigma_l^2}=1$ uniformly in $1\leq p \leq k_l$.
\end{lemma}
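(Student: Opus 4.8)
The plan is to expand $\sigma^2_{\nu_l}(F^l_p)$ via the pointwise identity \eqref{completesquare} from Lemma \ref{bestimate3} and show that everything except the term $Q_l^2 D_{m_l}(x_p)^2$ is negligible. It is convenient to abbreviate by $\langle \phi\rangle := \frac{1}{\#E_l^{k_l}}\sum_{\ul x\in E_l^{k_l}}\frac{1}{T_l}\int_0^{T_l}\phi(\ul x,t)\,dt$ the averaging against $\nu_l$, so that $\int\psi\,d\nu_l=\langle \psi(g_t\pi_l(\ul x))\rangle$ for $\psi\in C(T^1M)$ and, directly from the definitions, $\sigma^2_{\nu_l}(F^l_p)=\langle D_{\nu_l}(\cdot,\cdot;p)^2\rangle$. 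First I would apply \eqref{completesquare} inside this average: the final term contributes $Q_l^2\langle D_{m_l}(x_p)^2\rangle$, and since $D_{m_l}(x_p)^2$ does not depend on $t$ and the $p$-th coordinate marginal of $\mu_l$ is $m_l$, this equals $Q_l^2\sigma_l^2$ exactly. It therefore remains to control the error $\mathcal{E}_l := \langle \Delta_p^l(\ul x,t)\big[\Delta_p^l(\ul x,t)+2Q_l D_{m_l}(x_p)\big]\rangle$.

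Next I would bound $\mathcal{E}_l$. By \eqref{deltapl} we have $|\Delta_p^l(\ul x,t)|\le R_l$ with $R_l:=2\big(2KT_l+\kappa\epsilon\|f\|+2\delta_l Q_l\|f\|\big)$, hence $\langle (\Delta_p^l)^2\rangle\le R_l^2$, and by Cauchy--Schwarz (with respect to $\langle\cdot\rangle$, which is a probability average)
\[
|\langle \Delta_p^l D_{m_l}(x_p)\rangle|\le \sqrt{\langle (\Delta_p^l)^2\rangle}\,\sqrt{\langle D_{m_l}(x_p)^2\rangle}\le R_l\sigma_l .
\]
Using Cauchy--Schwarz here rather than the crude bound $|D_{m_l}(x_p)|\le 2T_l\|f\|$ is the key point, since the latter would cost an extra factor $T_l$ that the hypotheses may not absorb. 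This gives $|\mathcal{E}_l|\le R_l^2+2Q_l R_l\sigma_l$, and therefore
\[
\left|\frac{\sigma^2_{\nu_l}(F^l_p)}{Q_l^2\sigma_l^2}-1\right|\le \left(\frac{R_l}{Q_l\sigma_l}\right)^{2}+\frac{2R_l}{Q_l\sigma_l}.
\]
Since the right-hand side is independent of $p$, uniformity in $1\le p\le k_l$ comes for free.

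Finally I would verify that $R_l/(Q_l\sigma_l)\to 0$. Under the standing positive variance hypothesis \eqref{posvar} there is $c>0$ with $\sigma_l^2\ge c$ for all large $l$, so it suffices that $R_l/Q_l\to 0$. Since $Q_l=\lfloor (T_l-\delta_l)C_l/T_l\rfloor-1\gtrsim C_l$ once $\delta_l<T_l/2$, Hypothesis \ref{condstar} gives $T_l/Q_l\lesssim T_l/C_l\le \sqrt{k_l}\,T_l/C_l\to 0$ and $1/Q_l\to 0$, while $\delta_l\to 0$; writing out $R_l/Q_l=O(T_l/Q_l)+O(1/Q_l)+O(\delta_l)$ yields the claim, and the lemma follows.

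\textbf{Anticipated difficulty.} There is no serious obstacle: the argument is essentially bookkeeping built on \eqref{completesquare}. The only places demanding care are (i) correctly matching $\langle D_{m_l}(x_p)^2\rangle$ with $\sigma_l^2$ via the product structure of $\mu_l$, and (ii) checking that the three smallness conditions in Hypothesis \ref{condstar} are exactly what is needed to kill $R_l/Q_l$ — which works precisely because $C_l$ was chosen to dominate $\sqrt{k_l}\,T_l$.
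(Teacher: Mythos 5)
Your proof is correct and follows essentially the same route as the paper: expand $\sigma^2_{\nu_l}(F^l_p)-Q_l^2\sigma_l^2$ via the identity \eqref{completesquare}, then show the error term vanishes relative to $Q_l^2\sigma_l^2$ using the pointwise bound $|\Delta_p^l|\leq R_l$ and the relations in Hypothesis~\ref{condstar}. The only cosmetic difference is that you bound the cross term with Cauchy--Schwarz in the pairing $(2,2)$, whereas the paper pairs $L^\infty$ on $\Delta_p^l$ with $L^1\le L^2$ on $D_{m_l}$; both yield the same estimate $|\mathcal{E}_l|\le R_l^2+2Q_lR_l\sigma_l$, and your version is slightly tidier because it carries absolute values explicitly (the paper's displayed inequality reads $\int D_{m_l}\,dm_l$, which is literally zero and is evidently a typo for $\int|D_{m_l}|\,dm_l$).
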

\begin{proof}
Observe that we can write 
\[
\begin{aligned}
Q_l^2\sigma_l^2 & = \frac{1}{\#E_l^{k_l}}\sum_{\ul x \in E_l^{k_l}}\left (Q_lF(x_{p},T_l)-Q_l \int F(\cdot,T_l)d m_l \right )^2 \\
& = \frac{1}{\#E_l^{k_l}}\sum_{\ul x \in E_l^{k_l}}(Q_lD_{m_l}(x_{p}))^2  = \frac{1}{\# E_l^{k_l}}\sum_{\ul x \in E_l^{k_l}}\frac{1}{T_l}\int_0^{T_l}(Q_lD_{m_l}(x_{p}))^2dt.
\end{aligned}
\]
We thus observe using \eqref{deltapl} and \eqref{completesquare} that
\[
\begin{aligned}
\sigma_{\nu_l}^2(F_{p}^l)-Q_l^2\sigma_l^2 
&=\frac{1}{\#E_l^{k_l}}\sum_{\ul x \in E_l^{k_l}}\frac{1}{T_l}\int_0^{T_l}(D_{\nu_l}(\ul x, t;p))^2dt   -Q_l^2\sigma_l^2 \\
&=\frac{1}{\#E_l^{k_l}}\sum_{\ul x \in E_l^{k_l}}\frac{1}{T_l}\int_0^{T_l}((D_{\nu_l}(\ul x, t;p))^2  - Q^2_l D_{m_l}(x_p)^2) dt \\
%&=\frac{1}{\# E_l^{k_l}}\sum_{\ul x \in E_l^{k_l}}\frac{1}{T_l}\int_0^{T_l} \Delta_p( \ul x , t) (D_{\nu_l}(\ul x, t) + Q_lD_{m_l}(x_p) )  dt \\
&=\frac{1}{\# E_l^{k_l}}\sum_{\ul x \in E_l^{k_l}}\frac{1}{T_l}\int_0^{T_l} \Delta_p( \ul x , t) (\Delta_p(\ul x, t) +2 Q_l D_{m_l}(x_p))  dt \\
&=\int \left (\frac{1}{T_l}\int_0^{T_l}\Delta_p^l(\ul x, t)^2dt \right )d \mu_l+\frac{1}{\# E_l^{k_l}}\sum_{\ul x \in E_l^{k_l}}\frac{1}{T_l}\int_0^{T_l}2Q_l \Delta_{p}^l(\ul x, t)D_{m_l}(x_{p})dt \\
&\leq \int \left (\frac{1}{T_l}\int_0^{T_l}\Delta_p^l(\ul x, t)^2dt \right )d \mu_l+2 Q_l \sup_{\ul x, t} \{|\Delta_{p}^l(\ul x, t)|\}  \int D_{m_l} d m_l \\
&\leq \int \left (\frac{1}{T_l}\int_0^{T_l}(\Delta_p^l(\ul x, t))^2dt \right) d \mu_l + 2 Q_l \sigma_l \sup_{\ul x, t}\{|\Delta_{p}^l(\ul x, t)|\} \\
&\leq 4(2KT_l+2\kappa \epsilon \| f \|+2\delta_l Q_l \| f \|)^2 + 4Q_l \sigma_l(2KT_l+2\kappa \epsilon \| f \|+2\delta_l Q_l \| f \|).
\end{aligned}
\]
By Hypothesis \ref{condstar} on $(T_l,k_l,\delta_l,C_l)_{l\in \mathbb{N}}$ and our hypothesis that $\liminf_{l \to \infty} \sigma_l>0$, we conclude that

$$
\lim_{l\rightarrow \infty}\frac{\sigma_{\nu_l}^2(F_{p}^l)-Q_l^2\sigma_l^2}{Q_l^2\sigma_l^2}=0.
$$

Notice that the above upper bound on $\sigma_{\nu_l}^2(F_{p}^l)-Q_l^2\sigma_l^2$ is independent of $p$. As a result, the convergence is uniform in $p$ and this ends the proof of Lemma \ref{keylemma}.
\end{proof}
We obtain the following lemma as an immediate corollary. 
\begin{lemma} \label{keylemmacor} 
The sequence $ s^2_l=\sum_{p} \sigma^2_{\nu_l}(F^l_p), $ satisfies
\begin{equation} \label{eqmain}
    \lim_{l\to \infty} \frac{s_l}{Q_l \sigma_l \sqrt{k_l }}=1.
\end{equation}
\end{lemma}

 We might also consider $s_l'^2:=\sigma_{\nu_l}^2(\sum_{p}F_p^l)$ or $s_l''^2:=\sigma_{\nu_l}^2(F(\cdot, k_l(C_lT_l+M)))$ as natural  substitutes for $s_l^2$. We have the following result
\begin{lemma}   \label{s comparison}
$\lim_{l\rightarrow \infty}\frac{s_l'^2}{s_l^2}=\lim_{l\rightarrow \infty}\frac{s_l''^2}{s_l^2}=1$.
\end{lemma}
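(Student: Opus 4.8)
The plan is to show that all three quantities $s_l^2$, $s_l'^2$, $s_l''^2$ are asymptotically equal to the reference quantity $Q_l^2 k_l \sigma_l^2$. For $s_l^2$ this is exactly Lemma \ref{keylemmacor} (equivalently Lemma \ref{keylemma} summed over $p$), so it suffices to prove
\[
\lim_{l\to\infty}\frac{s_l'^2}{Q_l^2 k_l\sigma_l^2}=\lim_{l\to\infty}\frac{s_l''^2}{Q_l^2 k_l\sigma_l^2}=1.
\]
I would abbreviate $A_l:=2(2KT_l+\kappa\epsilon\|f\|+2\delta_l Q_l\|f\|)$, the uniform bound on $|\Delta_p^l(\ul x,t)|$ provided by \eqref{deltapl}, and record once the consequence of Hypothesis \ref{condstar} together with $\liminf_l\sigma_l>0$ that I will use repeatedly: since $A_l=O(T_l+\delta_l Q_l)$, $Q_l$ is comparable to $C_l$, and both $\sqrt{k_l}T_l/C_l\to 0$ and $\sqrt{k_l}\delta_l\to 0$ (the latter from $k_l\delta_l^2\to 0$), one has $\sqrt{k_l}\,A_l/(Q_l\sigma_l)\to 0$.

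For $s_l'^2=\sigma_{\nu_l}^2(\sum_p F_p^l)$, I would unwind the definition of $\nu_l$ and write, for $z=\pi_l(\ul x)$ and $t\in[0,T_l]$,
\[
\sum_p F_p^l(g_t z)-\int\Big(\textstyle\sum_p F_p^l\Big)d\nu_l=\sum_p D_{\nu_l}(\ul x,t;p)=\sum_p\Delta_p^l(\ul x,t)+Q_l\sum_p D_{m_l}(x_p).
\]
Squaring and integrating against $\nu_l$, the contribution of the last term is $Q_l^2\int(\sum_p D_{m_l}(x_p))^2\,d\mu_l$ (it depends only on $\ul x$, and $\nu_l$ projects to $\mu_l$); since $\mu_l$ is the product of copies of $m_l$ and $\int D_{m_l}\,dm_l=0$, the cross terms vanish and this equals $Q_l^2 k_l\sigma_l^2$, exactly as in the proof of Lemma \ref{keylemma}. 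The two remaining terms are controlled using $|\sum_p\Delta_p^l|\le k_l A_l$ and the Cauchy--Schwarz bound $\int|\sum_p D_{m_l}(x_p)|\,d\mu_l\le(k_l\sigma_l^2)^{1/2}$, giving $|s_l'^2-Q_l^2 k_l\sigma_l^2|\le k_l^2 A_l^2+2Q_l k_l^{3/2}\sigma_l A_l$; dividing by $Q_l^2 k_l\sigma_l^2$ yields a bound of $(\sqrt{k_l}A_l/(Q_l\sigma_l))^2+2\sqrt{k_l}A_l/(Q_l\sigma_l)\to 0$, which gives the claim for $s_l'^2$.

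For $s_l''^2=\sigma_{\nu_l}^2(F(\cdot,k_l(C_lT_l+M)))$ I would compare $\sum_p F_p^l$ with $F(\cdot,k_l(C_lT_l+M))$ directly. With $t_p=(p-1)(C_lT_l+M)$, these two functions agree on each block $[t_p,t_p+Q_lT_l]$ and differ only on the complementary intervals, each of length $C_lT_l+M-Q_lT_l\le\delta_l C_l+2T_l+M$ by the definition of $Q_l$, so $\|\sum_p F_p^l-F(\cdot,k_l(C_lT_l+M))\|_\infty\le k_l(\delta_l C_l+2T_l+M)\|f\|$. Using the elementary inequality $|\sigma_\nu(\phi)-\sigma_\nu(\psi)|\le 2\|\phi-\psi\|_\infty$ (valid for any probability measure), and that the right-hand side above divided by $Q_l\sigma_l\sqrt{k_l}$ tends to $0$ (same three ingredients, with $T_l\ge 1$ absorbing the constant $M$), I get $|s_l''-s_l'|=o(Q_l\sigma_l\sqrt{k_l})$, hence $s_l''/(Q_l\sigma_l\sqrt{k_l})\to 1$ as well, completing the proof.

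There is no serious obstacle: the whole content is the bookkeeping that each error term is $o(Q_l^2 k_l\sigma_l^2)$, and this uses precisely the same combination of Hypothesis \ref{condstar} and the positive-variance hypothesis already exploited in Lemma \ref{keylemma}. The one conceptual point worth flagging is that replacing the sum of variances $s_l^2$ by the variance of the sum $s_l'^2$ costs nothing, because the cross-correlations among the $D_{m_l}(x_p)$ vanish under the product measure $\mu_l$ — that is, the independence built into $\mu_l$ survives the $\pi_l$-construction up to the controlled errors $\Delta_p^l$.
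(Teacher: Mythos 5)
Your proof is correct, and it follows a route that differs from the paper's in two places worth noting. For $s_l'^2$, the paper expands $s_l'^2 - s_l^2$ as twice the sum of the pairwise cross-covariances $\int D_{\nu_l}(\ul x,t;p_1)D_{\nu_l}(\ul x,t;p_2)\,d\nu_l$ and bounds each of these terms absolutely; you instead compare $s_l'^2$ directly with $Q_l^2 k_l\sigma_l^2$ by keeping the sum $Q_l\sum_p D_{m_l}(x_p)$ intact and applying Cauchy--Schwarz once to $\int\bigl|\sum_p D_{m_l}(x_p)\bigr|\,d\mu_l\le\sqrt{k_l}\,\sigma_l$. This is a genuinely more efficient use of the independence built into $\mu_l$: a pairwise absolute bound over $\binom{k_l}{2}$ pairs produces a factor of order $k_l$, while your global argument only incurs $\sqrt{k_l}$, which is exactly what the Hypothesis \ref{condstar} rates $k_l\delta_l^2\downarrow 0$ and $\sqrt{k_l}T_l/C_l\downarrow 0$ are designed to beat. (The paper's displayed bound for the cross-term sum appears to drop the factor coming from the number of pairs, so your version is the one whose bookkeeping transparently closes.) For $s_l''^2$, the paper defines a tail quantity $\Delta_l'$ and runs a complete-the-square computation analogous to Lemma \ref{keylemma}; you instead invoke the elementary inequality $|\sigma_\nu(\phi)-\sigma_\nu(\psi)|\le 2\|\phi-\psi\|_\infty$ together with the same $L^\infty$ bound $k_l(C_l\delta_l+2T_l+M)\|f\|$ on the discrepancy between $\sum_p F_p^l$ and $F(\cdot,k_l(C_lT_l+M))$. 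This is shorter, reaches the same $o(Q_l\sigma_l\sqrt{k_l})$ conclusion, and requires no new decomposition. In short: same inputs (Lemma \ref{bestimate3}, Lemma \ref{keylemmacor}, Hypothesis \ref{condstar}, positive variance), slightly different and in one place tighter bookkeeping.
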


\begin{proof} %[Sketch of the proof]
We begin by verifying $\lim_{l\rightarrow \infty}\frac{s_l'^2}{s_l^2}=1$. For any $l>1$ and $1\leq p_1<p_2\leq k_l$, we have
$$
     \int D_{\nu_l}(\ul x,t;p_1)D_{\nu_l}(\ul x,t;p_2) d \nu_l= \int((Q_lD_{m_l}(x_{p_1})+\Delta_{p_1}^l(\ul x,t))(Q_lD_{m_l}(x_{p_2})+\Delta_{p_2}^l(\ul x,t))) d \nu_l.
$$
The right hand side is the sum of four terms, among which $\int Q_l^2D_{m_l}(x_{p_1})D_{m_l}(x_{p_2})d \nu_l=0$, and $\int(Q_lD_{m_l}(x_{p_1}) |\Delta_{p_2}^l(\ul x,t)|d \nu_l \leq 2Q_l\sigma_l(2KT_l+2\kappa \epsilon \| f \|+2\delta_l Q_l \| f \|)$, which also holds true when $p_1$ and $p_2$ are switched, and $\int|\Delta_{p_1}^l(\ul x,t)\Delta_{p_2}^l(\ul x,t)| d \nu_l\leq 4(2KT_l+2\kappa \epsilon \| f \|+2\delta_l Q_l \| f \|)^2$. As a result, we have
$$
\begin{aligned}
&\lim_{l\rightarrow \infty}\left |\frac{s_l'^2}{s_l^2}-1 \right |   =\lim_{l\rightarrow \infty} \left |\frac{2\sum_{1\leq p_1<p_2\leq k_l}\int (D_{\nu_l}(\ul x,t;p_1)D_{\nu_l}(\ul x,t;p_2))}{s_l^2} d \nu_l \right | \\
    &\leq \lim_{l\rightarrow \infty}\frac{8Q_l\sigma_l(2KT_l+2\kappa \epsilon \| f \|+2\delta_l Q_l \| f \|)+4(2KT_l+2\kappa \epsilon \| f \|+2\delta_l Q_l \| f \|)^2}{s_l^2} \\
    &=\lim_{l\rightarrow \infty}\frac{8Q_l\sigma_l(2KT_l+2\kappa \epsilon \| f \|+2\delta_l Q_l \| f \|)+4(2KT_l+2\kappa \epsilon \| f \|+2\delta_l Q_l \| f \|)^2}{Q_l^2\sigma_l^2k_l} \\
    &=0,
    \end{aligned}
$$
where in the second last equality we use Lemma \ref{keylemmacor}. The limit being $0$ follows from Hypothesis \ref{condstar} on $(T_l,k_l,\delta_l,C_l)_{l\in \mathbb{N}}$ and $\liminf_{l \to \infty} \sigma_l>0$.

Now to show $\lim_{l\rightarrow \infty}\frac{s_l''^2}{s_l^2}=1$, it suffices to show that $\lim_{l\rightarrow \infty}\frac{s_l''^2}{s_l'^2}=1$. Write $\Delta'_l(\ul x,t):=\sum_{p=1}^{k_l}\int_{(p-1)(C_lT_l+M)+Q_lT_l}^{p(C_lT_l+M)} f(g_{s+t}\pi_l(\ul x))ds$. Notice that by the definition of $Q_l$, for any $l\geq 1$ and $\ul x\in E_l^{k_l}$ we have
\begin{equation}   \label{Delta'}
    |\Delta'_l(\ul x,t)|\leq k_l((C_l-Q_l)T_l+M)||f||\leq k_l(C_l\delta_l+2T_l+M)||f||.
\end{equation}

We write $D_{\nu_l}(\ul x,t):=\sum_{p=1}^{k_l}(F_p^l(g_t\pi_l(\ul x))- \int F_p^l(g_t\pi_l(\ul x))d \nu_l)=\sum_{p=1}^{k_l} D_{\nu_l}(\ul x,t;p)$. As in the proof of Lemma \ref{keylemma}, we have
$$  
    \begin{aligned}
    &|s_l''^2-s_l'^2| \\
    &=\left |\frac{1}{\# E_l^{k_l}}\sum_{\ul x \in E_l^{k_l}}\frac{1}{T_l}\int_0^{T_l} \Delta_l'( \ul x , t) (\Delta_l'(\ul x, t) +2D_{\nu_l}(\ul x,t)) dt \right | \\
    &=\left |\int \left (\frac{1}{T_l}\int_0^{T_l}(\Delta_l'(\ul x, t))^2dt \right )d \mu_l+\frac{1}{\# E_l^{k_l}}\sum_{\ul x \in E_l^{k_l}}\frac{1}{T_l}\int_0^{T_l}2 \Delta_{l}'(\ul x, t)D_{\nu_l}(\ul x,t)dt \right | \\
    &\leq (k_l(C_l\delta_l+2T_l+M)||f||)^2+2(k_l(C_l\delta_l+2T_l+M)||f||)s_l',
    \end{aligned}
$$
which in turns shows that
$$
\begin{aligned}
&\lim_{l\rightarrow \infty}\left |\frac{s_l''^2}{s_l'^2}-1 \right | \\
&\leq \lim_{l\rightarrow \infty}\frac{(k_l(C_l\delta_l+2T_l+M)||f||)^2+2(k_l(C_l\delta_l+2T_l+M)||f||)s_l'}{s_l'^2}   \\
&=\lim_{l\rightarrow \infty}\frac{(k_l(C_l\delta_l+2T_l+M)||f||)^2+2(k_l(C_l\delta_l+2T_l+M)||f||)Q_l\sigma_l\sqrt{k_l}}{Q_l^2\sigma_l^2k_l}   \\
&=\lim_{l\rightarrow \infty} \frac{k_l^2C_l^2\delta_l^2+2k_l^{3/2}C_lQ_l\sigma_l\delta_l}{Q_l^2\sigma_l^2k_l}=0,
\end{aligned}
$$
and therefore concludes the proof of the lemma.
\end{proof}

Applying Lemma \ref{s comparison}, we can freely replace $s_l$ in \eqref{Lindeberg} and \eqref{LCLT} by $s_l'$ or $s_l''$. Although this is not used in proving Theorem \ref{main}, it allows us to reinterpret the conclusion. We also observe that, in the conclusion, one can easily see that terms of the form $\int F( \cdot, k_l(C_lT_l+M)) d \nu_l$ can be replaced with $k_l(C_lT_l+M) \int f d \nu_l$. 

We now prove the following statement where we compare the average of $F$ along the orbit segment of $v$ over the time interval $[0, k_l(C_lT_l+M)]$ to its average over the sum of the time intervals $[t_p, t_p+Q_lT_l]$.
\begin{lemma} \label{secondlemma}
For each $l\geq 2$, define the functions
\[
A_l (v) := \frac{F(v, k_l(C_lT_l+M)) - \int F( \cdot, k_l(C_lT_l+M)) d \nu_l}{s_l}, 
\]
\[
B_l(v) := \frac{\sum_{p}F_{p}^l(v)- \int \sum_{p}F^l_{p} d \nu_l} {s_l},
\]
where the sum is over $1\leq p\leq k_l$. For any $a>0$, we have 
\[
\lim_{l\rightarrow \infty}\nu_l(v:|A_l-B_l|>a)=0.
\]

\end{lemma}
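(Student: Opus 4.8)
The plan is to recognise that $A_l - B_l$ is, up to the common normalisation $s_l$, exactly the centred version of the ``leftover'' quantity $\Delta'_l$ already introduced in the proof of Lemma~\ref{s comparison}, and that this quantity is uniformly negligible compared with $s_l$; the whole statement then follows from estimates that are already in place.

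First I would note that for $\nu_l$-almost every $v$ we may write $v = g_t(\pi_l(\ul x))$ for some $\ul x \in E_l^{k_l}$ and $t \in [0, T_l]$. With $t_p = (p-1)(C_lT_l+M)$ and $t_{k_l+1} := k_l(C_lT_l+M)$, the interval $[0, k_l(C_lT_l+M)]$ is the disjoint union of the blocks $[t_p, t_{p+1}]$, $1\le p\le k_l$; since $F^l_p(v) = \int_{t_p}^{t_p+Q_lT_l} f(g_s v)\,ds$, the difference $F(v, k_l(C_lT_l+M)) - \sum_p F^l_p(v)$ is precisely the integral of $f$ over the union of the gaps $[t_p + Q_lT_l, t_{p+1}]$, i.e.\ it equals $\Delta'_l(\ul x, t)$. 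This quantity depends only on $v$, so non-uniqueness of the representation causes no trouble, and we obtain
\[
A_l(v) - B_l(v) = \frac{\Delta'_l(\ul x,t) - \int \Delta'_l\,d\nu_l}{s_l}, \qquad\text{hence}\qquad |A_l(v) - B_l(v)| \le \frac{2}{s_l}\sup_{\ul x,t}|\Delta'_l(\ul x,t)| \quad \nu_l\text{-a.e.}
\]

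Next I would feed in the bound \eqref{Delta'}, namely $\sup_{\ul x,t}|\Delta'_l(\ul x,t)| \le k_l(C_l\delta_l + 2T_l + M)\|f\|$, together with Lemma~\ref{keylemmacor}, which gives $s_l \sim Q_l\sigma_l\sqrt{k_l}$, and the elementary fact that $Q_l/C_l \to 1$. The claim then reduces to
\[
\frac{\sqrt{k_l}\,(C_l\delta_l + 2T_l + M)}{C_l\,\sigma_l}\longrightarrow 0,
\]
which follows term by term from Hypothesis~\ref{condstar}: $\sqrt{k_l}\,\delta_l = \sqrt{k_l\delta_l^2}\to 0$, $\sqrt{k_l}\,T_l/C_l\to 0$, and $\sqrt{k_l}\,M/C_l \le M\sqrt{k_l}\,T_l/C_l\to 0$ (using $T_l>1$), while $\sigma_l$ stays bounded away from $0$ by the standing hypothesis $\liminf_l \sigma_l^2 > 0$. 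This gives $\sup_v|A_l(v)-B_l(v)|\to 0$, so for every $a>0$ one actually has $\nu_l(v:|A_l-B_l|>a)=0$ for all large $l$, which is stronger than the assertion.

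I do not anticipate a genuine obstacle: the lemma is bookkeeping, and all the ingredients ($\Delta'_l$ uniformly small, $s_l$ of exact order $Q_l\sigma_l\sqrt{k_l}$) are already available. The one point that needs mild care is to verify that the gap-integral appearing in $A_l - B_l$ really coincides with the $\Delta'_l$ of Lemma~\ref{s comparison}, i.e.\ that the endpoints $t_p + Q_lT_l$ and $t_{p+1} = p(C_lT_l+M)$ match up as claimed; once that is pinned down, the conclusion is automatic from the uniform (hence pointwise) bound above.
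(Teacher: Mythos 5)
Your proposal is correct and follows essentially the same route as the paper: both hinge on the uniform pointwise bound $|F(v, k_l(C_lT_l+M)) - \sum_p F_p^l(v)| \le k_l(C_l\delta_l + M + 2T_l)\|f\|$ (which you correctly identify as $\sup|\Delta'_l|$), combined with $s_l \sim Q_l\sigma_l\sqrt{k_l}$ from Lemma~\ref{keylemmacor}, and the same Hypothesis~\ref{condstar} limits. The only difference is cosmetic: the paper applies Markov's inequality to $\int|A_l-B_l|\,d\nu_l$, whereas you observe the bound is already uniform in $v$ and hence $\sup_v|A_l-B_l|\to 0$, yielding the marginally stronger statement that $\nu_l(|A_l-B_l|>a)$ is eventually exactly zero.
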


\begin{proof}
For any constant $a>0$, we have
\[
\begin{aligned}
\lim_{l\rightarrow \infty}\nu_l(v:|A_l(v) -B_l(v) |>a) & \leq \lim_{l\rightarrow \infty}\frac{\int |A_l-B_l| d \nu_l}{a} \\
& \leq \lim_{l\rightarrow \infty}\frac{2k_l(C_l\delta_l+M+2T_l)\| f \|}{a s_l} \\
&=\lim_{l\rightarrow \infty} \frac{2k_l(C_l\delta_l+M+2T_l)\| f \|}{a\sqrt{\sum_{p}\sigma^2_{\nu_l}(F_{p}^{l})}} \\
&=\lim_{l\rightarrow \infty} \frac{2k_l(C_l\delta_l+M+2T_l)\| f \|}{a\sqrt{k_lQ_l^2\sigma_l^2}} \\
&=\lim_{l\rightarrow \infty}\frac{2k_l\delta_l\| f \|}{a\sqrt{k_l}\sigma_l}+\frac{(2M+4T_l)\sqrt{k_l}\| f \|}{aQ_l\sigma_l} =0. 
\end{aligned}
\]
In the above calculation, the second line follows from
\[
\begin{aligned}
\left |F(v, k_l(C_lT_l+M))-\sum_{p}F_{p}^l \right | & \leq k_l(C_lT_l+M-Q_lT_l)\| f \| \\
& \leq k_l(C_lT_l+M-((T_l-\delta_l)C_l T_l^{-1}-2)T_l)\| f \| \\
& = k_l(C_l\delta_l+M+2T_l)\| f \|,
\end{aligned}
\]
the fourth line  follows from Lemma \ref{keylemma}, and  the fifth line converges to $0$ by Hypothesis \ref{condstar} and $\liminf \sigma^2_l>0$.
\end{proof}
Lemma \ref{secondlemma} is the reason we consider sums of the form $\sum_{p=1}^{k_l}F_{p}^l$. We now show that the CLT conclusions for $\mu_l$ and $\nu_l$ are equivalent. For the following proofs, we define a function $Y_p:g_{[0, T_l]}\pi_l(E_l^{k_l}) \to \RR$ by 
\[
Y_p(g_s \pi_l(\ul x)) = F(x_p, T_l) - \int(\ul x \to F(x_i, T_l)) d \mu_l = D_{m_l}(x_p),
\]
and we note that $\sum_{p=1}^{k_l} Y_p(g_s \pi_l(\ul x)) = \sum_{p=1}^{k_l}F(x_p, T_l) - k_l \int F(\cdot, T_l) d m_l$. 

\begin{lemma} \label{conclusionmuiffnu}
The sequence $(\nu_l)$ satisfies the CLT \eqref{LCLT}
\[
\begin{aligned}
\lim_{l\rightarrow \infty}\nu_l(\{x: \frac{F(x, k_l(C_lT_l+M)) - \int F(\cdot,k_l(C_lT_l+M))d \nu_l}{s_l}\leq a\})=N(a).
\end{aligned}
\]
if and only if the sequence $(\mu_l)$ satisfies the CLT \eqref{muclt} 
\[
\begin{aligned}
\lim_{l\rightarrow \infty}\mu_{l} \left( \left \{ (x_1,\ldots, x_{k_l}) : \frac{\sum_{p=1}^{k_l}F(x_p, T_l)- k_l \int F(\cdot, T_l) d m_l}{\sqrt{k_l \sigma_l^2}}\leq a \right \} \right )=N(a).
\end{aligned}
\]
\end{lemma}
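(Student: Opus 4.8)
The plan is to reduce the CLT \eqref{LCLT} for $(\nu_l)$ to the CLT \eqref{muclt} for $(\mu_l)$ through a chain of three comparisons. The first link, $A_l \leftrightarrow B_l$, is already supplied by Lemma \ref{secondlemma}: since $A_l - B_l \to 0$ in $\nu_l$-probability and the limit law $N$ is continuous, a standard converging-together (Slutsky-type) argument shows that $\nu_l(\{A_l \le a\}) \to N(a)$ for all $a$ if and only if $\nu_l(\{B_l \le a\}) \to N(a)$ for all $a$. So it suffices to compare the $\nu_l$-distribution of $B_l$ with the $\mu_l$-distribution of the normalized sum appearing in \eqref{muclt}.

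For the second link I would introduce $\widetilde B_l := (\sqrt{k_l}\,\sigma_l)^{-1}\sum_{p=1}^{k_l} Y_p$, so that $\widetilde B_l(g_s\pi_l(\ul x)) = (\sum_p F(x_p,T_l) - k_l\int F(\cdot,T_l)\,dm_l)/\sqrt{k_l\sigma_l^2}$ depends only on $\ul x$, not on $s$. Summing the identity $D_{\nu_l}(\ul x,t;p) = Q_l D_{m_l}(x_p) + \Delta_p^l(\ul x,t)$ from Lemma \ref{bestimate3} over $p$ and dividing by $s_l$ gives $B_l(g_t\pi_l(\ul x)) = c_l\,\widetilde B_l(g_t\pi_l(\ul x)) + E_l(\ul x,t)$, with $c_l := Q_l\sqrt{k_l}\,\sigma_l/s_l$ and $E_l(\ul x,t) := s_l^{-1}\sum_p\Delta_p^l(\ul x,t)$. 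Here $c_l \to 1$ by Lemma \ref{keylemmacor}, and the bound \eqref{deltapl} together with Lemma \ref{keylemmacor} gives $\sup_{\ul x,t}|E_l(\ul x,t)| \le \epsilon_l$, where $\epsilon_l := 2k_l(2KT_l+\kappa\epsilon\|f\|+2\delta_lQ_l\|f\|)/s_l \to 0$ by Hypothesis \ref{condstar} and $\liminf_l\sigma_l^2>0$ — exactly the estimates already carried out in the proof of Lemma \ref{secondlemma}. The structural point that makes everything work is that $\widetilde B_l$ is constant along each orbit segment $g_{[0,T_l]}\pi_l(\ul x)$, so that $\nu_l(\{\widetilde B_l \le b\}) = \mu_l(\{\ul x : \widetilde B_l \le b\})$ for every $b$, and the right-hand side is precisely the probability appearing in \eqref{muclt}.

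The third step is then a deterministic squeeze. Fixing $a$ and $l$ large enough that $c_l > 0$, the inclusions $\{\widetilde B_l \le (a-\epsilon_l)/c_l\} \subseteq \{B_l \le a\} \subseteq \{\widetilde B_l \le (a+\epsilon_l)/c_l\}$ yield
\[
\mu_l\!\left(\{\widetilde B_l \le (a-\epsilon_l)/c_l\}\right) \;\le\; \nu_l(\{B_l \le a\}) \;\le\; \mu_l\!\left(\{\widetilde B_l \le (a+\epsilon_l)/c_l\}\right),
\]
and, reversing the roles via $\{\widetilde B_l \le b\} = \{B_l \le c_lb + E_l\}$, one gets $\nu_l(\{B_l \le c_lb-\epsilon_l\}) \le \mu_l(\{\widetilde B_l \le b\}) \le \nu_l(\{B_l \le c_lb+\epsilon_l\})$. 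Since the limiting CDF $N$ is continuous, pointwise convergence of these CDFs to $N$ is automatically uniform (P\'olya's theorem), so the thresholds $(a \pm \epsilon_l)/c_l \to a$ and $c_lb \pm \epsilon_l \to b$ may be replaced by their limits without changing the limits of the probabilities; this gives the equivalence of \eqref{muclt} with the CLT for $B_l$ under $\nu_l$, and combined with the first step, with \eqref{LCLT}. The one genuinely delicate point is this last squeeze: the threshold $(a - E_l(v))/c_l$ truly depends on the point $v$, and handling it requires both the uniform deterministic bound $\epsilon_l \to 0$ and the P\'olya upgrade of weak convergence to the continuous law $N$; every other ingredient is bookkeeping that reuses Lemmas \ref{bestimate3}, \ref{keylemmacor}, and \ref{secondlemma}.
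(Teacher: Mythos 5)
Your proof is correct and follows essentially the same route as the paper: both use Lemma \ref{secondlemma} together with Lemma \ref{keylemmacor} to replace $A_l$ by a renormalized $B_l$, apply the uniform deterministic bound from \eqref{deltapl} to compare $B_l$ with $\widetilde B_l=(\sqrt{k_l}\sigma_l)^{-1}\sum_p Y_p$, and then exploit the fact that $Y_p$ is constant on each orbit segment $g_{[0,T_l]}\pi_l(\ul x)$ to identify the $\nu_l$-distribution of $\widetilde B_l$ with the $\mu_l$-distribution appearing in \eqref{muclt}. The paper leaves the final threshold-shifting/P\'olya squeeze largely implicit where you spell it out, but the argument is the same in substance.
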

\begin{proof}
First we observe that by Lemma \ref{keylemmacor} and Lemma \ref{secondlemma}, and the fact that $\nu_l$ only gives mass to points in $g_{[0, T_l]}\pi_l(E_l^{k_l})$, that the CLT \eqref{LCLT} holds if and only if 
\[
\lim_{l\rightarrow \infty}\nu_l \left ( \left \{g_s(\pi_l(\ul x)) : \ul x \in E_l^{k_l}, s \in [0, T_l], \frac{\sum_{p=1}^{k_l}(F_{p}^l-\int F_{p}^l d \nu_l)}{\sqrt{k_l} Q_l \sigma_l}\leq a \right \} \right )=N(a).
\]
Observe that by \eqref{deltapl} we have
\[
\begin{aligned}
\left | \left (\sum_{p=1}^{k_l}(F_{p}^l-\int F_{p}^l d \nu_l) - Q_l \sum_{p=1}^{k_l} Y_p \right )(g_s(\pi_l(\ul x))) \right | & = \left | \sum_{p=1}^{k_l} \Delta_p(\ul x, t) \right | \\
& \leq 2k_l(2KT_l+2\kappa \epsilon \| f \|+2\delta_lQ_l\| f \|).
\end{aligned}
\]
Fix $b>0$. By Hypothesis \ref{condstar} and \eqref{posvar}, for sufficiently large $l$, 
\begin{equation}    \label{neglectM}
    \frac{2k_l(2KT_l+2\kappa \epsilon \| f \|+2\delta_lQ_l\| f \|)}{\sqrt{k_l}Q_l\sigma_l}< b,
\end{equation}
and it thus follows that for sufficiently large $l$,
\[
\left \{g_s \pi_l(x) :\frac{|\sum_{p=1}^{k_l}(F_{p}^l-\int F_{p}^l d \nu_l)-Q_l\sum_{p=1}^{k_l}Y_p |}{\sqrt{k_l}Q_l\sigma_l}>b \right \}  = \emptyset.
\]
In particular,
\[
\begin{aligned}
&\lim_{l\rightarrow \infty}\nu_l \left ( \left \{g_s \pi_l(x) :\frac{|\sum_{p=1}^{k_l}(F_{p}^l-\int F_{p}^l d \nu_l)-Q_l\sum_{p=1}^{k_l}Y_p |}{\sqrt{k_l}Q_l\sigma_l}>b \right \} \right )  =0.
\end{aligned}
\]
Therefore \eqref{LCLT} holds if and only if 

$$\lim_{l\rightarrow \infty}\nu_l\left ( \left \{g_s \pi_l(\ul x):\ul x \in E_l^{k_l}, s \in [0, T_l], \frac{Q_l\sum_{p=1}^{k_l}Y_p}{\sqrt{k_l}Q_l\sigma_l}\leq a \right \} \right )=N(a).$$

We are now in a position to reformulate in terms of $\mu_l$. Since $Y_p$ does not depend on the variable $s$, then either $g_s \pi_l(\ul x)$ belongs to the above set for all $s \in [0, T_l]$ or for no $s \in [0, T_l]$. It thus follows from the definition of $\nu_l$ that
\[
\nu_l\left ( \left \{g_s \pi_l(\ul x):\frac{Q_l\sum_{p=1}^{k_l}Y_p}{\sqrt{k_l}Q_l\sigma_l}\leq a \right \} \right)= \frac{1}{\#E_l^{k_l}} \# \left \{ \pi_l (\ul x):  \frac{Q_l\sum_{p=1}^{k_l}Y_p}{\sqrt{k_l}Q_l\sigma_l}\leq a \right \}.
\]
Furthermore, by the definition of $Y_p$, we see that
\[
 \left \{ \pi_l (\ul x):  \frac{Q_l\sum_{p=1}^{k_l}Y_p}{\sqrt{k_l}Q_l\sigma_l}\leq a \right \} = \left \{ \ul x \in E_l^{k_l} : \frac{\sum_{p=1}^{k_l}F(x_p, T_l) - k_l \int F(\cdot, T_l) d m_l}{\sqrt{k_l}\sigma_l}\leq a \right \}.
\]
We can thus conclude that
$$\lim_{l\rightarrow \infty}\mu_l \left ( \left \{\ul x:\frac{\sum_{p=1}^{k_l}F(x_p, T_l) - k_l \int F(\cdot, T_l) d m_l}{\sqrt{k_l}\sigma_l}\leq a \right \} \right )=N(a).$$
Thus, we conclude that \eqref{LCLT} holds if and only if \eqref{muclt} holds. 
\end{proof}
All that remains to show equivalence of the Lindeberg conditions in Theorem  \ref{main} on $(\nu_l)$ and in Theorem \ref{mainonmu} on $(\mu_l)$.

\begin{lemma} \label{hypmuiffnu}
If $\liminf_{l \to \infty}\sigma_l>0$, then the Lindeberg condition \eqref{Lindeberg}
\[
\lim_{l\rightarrow \infty}\frac{\sum_{1\leq p\leq k_l, }L_{\nu_l}(F^l_p,\gamma s_l)}{s_l^2}=0 
\]
holds for all $\gamma>0$ if and only if the Lindeberg condition \eqref{mulassump}
\[
\lim_{l\rightarrow \infty}\frac{L_{m_l}(F(\cdot , T_l),\gamma \sqrt{k_l} \sigma_l )}{\sigma_l^2}=0
\]
holds for all $\gamma>0$.
\end{lemma}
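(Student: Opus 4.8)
The plan is to show that the two Lindeberg sums are comparable term-by-term once everything is rewritten in terms of the uniform measure $\mu_l$ on $E_l^{k_l}$, using the decomposition $D_{\nu_l}(\ul x,t;p) = Q_l D_{m_l}(x_p) + \Delta_p^l(\ul x,t)$ from Lemma \ref{bestimate3} together with the uniform bound $|\Delta_p^l(\ul x,t)| \le 2(2KT_l + \kappa\eps\|f\| + 2\delta_l Q_l\|f\|) =: R_l$, where Hypothesis \ref{condstar} and $\liminf\sigma_l>0$ force $R_l/(Q_l\sigma_l)\to 0$. First I would unfold the definitions: $L_{\nu_l}(F^l_p, \gamma s_l) = \int D_{\nu_l}(\ul x,t;p)^2 \mathbb 1_{\{|D_{\nu_l}(\ul x,t;p)| > \gamma s_l\}} \, d\nu_l$, and since $F^l_p$ composed with $g_s$ is (up to the $\Delta_p^l$ error) the function $Q_l Y_p$ which does not depend on $s$, the $\nu_l$-integral over $g_{[0,T_l]}\pi_l(E_l^{k_l})$ becomes essentially a $\mu_l$-integral; likewise $L_{m_l}(F(\cdot,T_l),\gamma\sqrt{k_l}\sigma_l) = \int D_{m_l}(x)^2 \mathbb 1_{\{|D_{m_l}(x)|>\gamma\sqrt{k_l}\sigma_l\}}\,dm_l$. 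By Lemma \ref{keylemmacor}, $s_l$ and $Q_l\sigma_l\sqrt{k_l}$ are asymptotically equal, and by Lemma \ref{keylemma} each $\sigma^2_{\nu_l}(F^l_p)$ is asymptotic to $Q_l^2\sigma_l^2$; so both the thresholds and the normalizers in the two conditions match after dividing through by $Q_l$ in the $\nu_l$ quantities (note $L_{\nu_l}(F^l_p,\cdot)$ scales like $Q_l^2$ while $s_l^2$ scales like $Q_l^2$, so the $Q_l$'s cancel).

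Concretely, for the forward direction I would estimate the indicator sets: on the event $\{|D_{\nu_l}(\ul x,t;p)| > \gamma s_l\}$ we have $Q_l|D_{m_l}(x_p)| \ge |D_{\nu_l}| - R_l > \gamma s_l - R_l \ge \tfrac{\gamma}{2} s_l \ge \tfrac{\gamma}{2}\cdot\tfrac{1}{2}Q_l\sigma_l\sqrt{k_l}$ for $l$ large, i.e. $|D_{m_l}(x_p)| > \tfrac{\gamma}{4}\sigma_l\sqrt{k_l} =: \gamma' \sqrt{k_l}\sigma_l$; and $D_{\nu_l}^2 \le 2 Q_l^2 D_{m_l}(x_p)^2 + 2R_l^2$. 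Integrating and summing over $p$, using $\#E_l^{k_l} = \#E_l\cdot\#E_l^{k_l-1}$ to reduce the $p$-th term of $\sum_p \int D_{m_l}(x_p)^2 \mathbb 1\,d\mu_l$ to $L_{m_l}(F(\cdot,T_l),\gamma'\sqrt{k_l}\sigma_l)$ (exactly the reduction already carried out in the proof of Theorem \ref{mainonmu}), one gets
\[
\frac{\sum_p L_{\nu_l}(F^l_p,\gamma s_l)}{s_l^2} \le \frac{2Q_l^2 k_l\, L_{m_l}(F(\cdot,T_l),\gamma'\sqrt{k_l}\sigma_l) + 2 k_l R_l^2}{s_l^2} \longrightarrow 0,
\]
where the first term vanishes by \eqref{mulassump} (after replacing $s_l^2$ by its asymptotic equivalent $Q_l^2\sigma_l^2 k_l$ and cancelling) and the second by Hypothesis \ref{condstar}, since $R_l^2 k_l / (Q_l^2\sigma_l^2 k_l) = (R_l/(Q_l\sigma_l))^2 \to 0$. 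The converse direction is symmetric: on $\{|D_{m_l}(x_p)| > \gamma\sqrt{k_l}\sigma_l\}$ one has $|D_{\nu_l}(\ul x,t;p)| \ge Q_l|D_{m_l}(x_p)| - R_l \ge \tfrac{1}{2}Q_l\gamma\sqrt{k_l}\sigma_l \ge \gamma'' s_l$ for large $l$, and $Q_l^2 D_{m_l}(x_p)^2 \le 2 D_{\nu_l}^2 + 2 R_l^2$; integrating, summing over $p$, and dividing by $\sigma_l^2$ (using $Q_l^2\sigma_l^2 k_l \sim s_l^2$ and that the $\gamma''$ in the $\nu_l$ Lindeberg sum is arbitrary) yields \eqref{mulassump} from \eqref{Lindeberg}.

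The main obstacle is bookkeeping rather than conceptual: one must keep careful track of how the auxiliary constants $\gamma$ transform (a given $\gamma$ on one side produces some explicit multiple $c\gamma$ on the other, which is harmless precisely because both Lindeberg conditions are quantified over \emph{all} positive constants), and one must confirm that the cross term and the $\Delta_p^l$-error term, once divided by the correct power of $Q_l\sigma_l\sqrt{k_l}$, are controlled purely by Hypothesis \ref{condstar} and $\liminf\sigma_l>0$ — which is exactly the same mechanism already used in Lemmas \ref{keylemma}, \ref{s comparison} and \ref{secondlemma}. A minor point requiring care is that the $\nu_l$-integral of a function of the form $\psi(g_s\pi_l(\ul x))$ with $\psi$ depending only on $\ul x$ equals the $\mu_l$-integral of the corresponding function on $E_l^{k_l}$, which is immediate from the definition of $\nu_l$ as an average of the orbit measures $\tfrac{1}{T_l}L(\pi_l(\ul x),T_l)$; this is the same observation used at the end of the proof of Lemma \ref{conclusionmuiffnu}.
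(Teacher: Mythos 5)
Your proposal is correct and follows essentially the same route as the paper's proof: decompose $D_{\nu_l}(\ul x,t;p) = Q_l D_{m_l}(x_p) + \Delta_p^l(\ul x,t)$, invoke the uniform bound $|\Delta_p^l|\le R_l$ from Lemma \ref{bestimate3} together with Hypothesis \ref{condstar} and $\liminf\sigma_l>0$ to get $R_l/(Q_l\sigma_l)\to 0$, use this to show the two indicator sets are nested after a harmless rescaling of the constant $\gamma$, and marginalize the $\nu_l$-integral to a $\mu_l$-integral and then (by product structure) to an $m_l$-integral, with Lemma \ref{keylemmacor} matching the normalizers. The only cosmetic difference is that you bound the quadratic via $(a+b)^2\le 2a^2+2b^2$, whereas the paper uses the exact identity \eqref{completesquare} so that the error term is literally $\sigma_{\nu_l}^2(F_p^l)-Q_l^2\sigma_l^2$, already controlled in Lemma \ref{keylemma}; both are valid and lead to the same conclusion.
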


\begin{proof}

Let $Z_l(c)=Z(c, F^l_p, \nu_l) = \{x : |F^l_p - \int F^l_p d \nu_l| > c\}$ be the set from the Lindeberg condition. Observe that
\[
    \begin{aligned}
       L_{\nu_l}(F^l_{p},\gamma s_l) &= \int (F^l_p - \int F^l_p d \nu_l)^2 \mathbb{1}_{Z_l(\gamma s_l)} d \nu_l \\
        & = \frac{1}{E_l^{k_l}} \sum_{\ul x \in E_l^{k_l}} \frac{1}{T_l} \int_0^{T_l} D_{\nu_l}(\ul x, t;p)^2 \mathbb{1}_{Z_l(\gamma s_l)}(g_t\pi_l(\ul x)) dt.
\end{aligned}
\]
Using \eqref{completesquare}, we see that $L_{\nu_l}(F^l_{p},\gamma s_l)$ is bounded above by the sum of the terms
\[
\frac{1}{E_l^{k_l}} \sum_{\ul x \in E_l^{k_l}} \frac{1}{T_l}  \int_0^{T_l}  \Delta_{p}^l(\ul x, t)[\Delta_{p}^l(\ul x, t)+2 Q_l D_{m_l}(\ul x, p)] dt,
\] 
and 
\[
\frac{1}{E_l^{k_l}} \sum_{\ul x \in E_l^{k_l}}   \frac{1}{T_l}  \int_0^{T_l}Q_l^2 D_{m_l}(x_p)^2\mathbb{1}_{Z(\gamma s_l)}(g_t\pi_l(\ul x)) dt.
\]
The first of these terms is equal to $\sigma^2_{\nu_l}(F^l_p) - Q_l^2 \sigma_l$ as observed in the proof of Lemma \ref{keylemma}. The second term can be written as 
\[
\int(Q_lY_p)^2 \mathbb{1}_{Z_l(\gamma s_l)} d \nu_l.
\] 
 Since $s_l^{-2} \Sigma_p(\sigma^2_{\nu_l}(F^l_p) - Q_l^2 \sigma_l^2) \to 0$ by the proof of Lemma \ref{keylemma},  it follows that 
\[
    \begin{aligned}
    \lim_{l\rightarrow \infty}\frac{\sum_{p}L_{\nu_l}(F^l_{p},\gamma s_l)}{s_l^2} \leq \lim_{l\rightarrow \infty}\frac{\sum_{p} \int(Q_l Y_p)^2\mathbb{1}_{Z_l(\gamma s_l)}d \nu_l}{s_l^2}.
    \end{aligned}
\]

We now work on the set $Z_l(\gamma s_l)$. Since $\nu_l (\{ g_t (\pi_l (\ul x)) : \ul x \in E^{k_l}_l, t \in [0, T_l]\}) =1$, it suffices for our argument to consider the set
\[
Z_l'(\gamma s_l):= \{ g_t (\pi_l (\ul x)) : \ul x \in E^{k_l}_l, t \in [0, T_l], |F^l_p - \int F^l_p d \nu_l| > \gamma s_l\}.
\]
Note that $ |F^l_p  g_t (\pi_l (\ul x)) - \int F^l_p d \nu_l| = |D_{\nu_l}(\ul x, t)| \leq |\Delta^l_p(\ul x, t)| + Q_l|Y_p(  g_t (\pi_l (\ul x))|$. Thus
\[
Z_l'(\gamma s_l) \subset \{g_t (\pi_l (\ul x)) : |Y_p(g_t (\pi_l (\ul x))| \geq Q_l^{-1}(\gamma s_l - |\Delta^l_p(\ul x, t)|) \}.
\]
Recall that $\sup_{\ul x, t}\{|\Delta_{p}^l (\ul x, t)|\}\leq 2(2KT_l+\kappa \epsilon \| f \|+2\delta_l Q_l \| f \|)$ and $\lim_{l\rightarrow \infty} \frac{s_l}{\sqrt{k_l}Q_l\sigma_l}=1$.  Therefore, by Hypothesis \ref{condstar} and \eqref{posvar}, for sufficiently large $l$, we have $|\Delta_p^l(\ul x, t)| \leq \frac{\gamma s _l}{2}$ for all $t\in [0,T_l]$ and $\ul x \in E_l^{k_l}$. It follows that for sufficiently large $l$,

\begin{equation}    \label{setzl}
\begin{aligned}
Z_l'(\gamma s_l) & \subset \{g_t (\pi_l (\ul x)) : |Y_p(g_t (\pi_l (\ul x))| \geq  \gamma s_l(2Q_l)^{-1} \} \\
& \subset  \{g_t (\pi_l (\ul x)) : |Y_p(g_t (\pi_l (\ul x))| \geq  (\gamma \sigma_l \sqrt{k_l})/4 \}.
\end{aligned}
\end{equation}
Thus for all large $l$,
\begin{equation}  \label{eqQlYpupperbound}
    \begin{aligned}
\int (Q_l Y_p)^2 \mathbb{1}_{Z_l(\gamma s_l)} d \nu_l & = \int (Q_l Y_p)^2 \mathbb{1}_{Z'_l(\gamma s_l)} d \nu_l \\
& \leq \int (Q_l Y_p)^2 \mathbb{1}_{ \{g_t (\pi_l (\ul x)) : |Y_p(g_t (\pi_l (\ul x))| \geq  (\gamma \sigma_l \sqrt{k_l})/4 \}} d \nu_l \\
& = Q_l^2 \int D_{m_l} ((\ul x \to x_p))^2 \mathbb{1}_{\{ \ul x: |D_{m_l}(x_p)| \geq  (\gamma \sigma_l \sqrt{k_l})/4 \}} d \mu_l, \\
& = Q_l^2 \int D_{m_l} (x)^2 \mathbb{1}_{\{ x: |D_{m_l}(x)| \geq  (\gamma \sigma_l \sqrt{k_l})/4 \}} d m_l \\
& = Q_l^2 L_{m_l}(F(\cdot , T_l),\gamma \sigma_l  \sqrt{k_l}/4).
\end{aligned}
\end{equation}
Combining the above calculations, and using \eqref{eqmain}, it follows that if we assume  \eqref{mulassump}, then
\[
    \begin{aligned}
    \lim_{l\rightarrow \infty}\frac{\sum_{p}L_{\nu_l}(F^l_{p},\gamma s_l)}{s_l^2} &\leq \lim_{l\rightarrow \infty}\frac{\sum_{p} \int(Q_l Y_p)^2\mathbb{1}_{Z_l(\gamma s_l)}d \nu_l}{s_l^2} \\
    & \leq \lim_{l\rightarrow \infty} \frac{k_l Q_l^2 L_{m_l}(F(\cdot , T_l),\gamma \sigma_l  \sqrt{k_l}/4)}{s_l^2}\\
     & = \lim_{l\rightarrow \infty} \frac{L_{m_l}(F(\cdot , T_l),\gamma \sigma_l  \sqrt{k_l}/4)}{\sigma_l^2} =0, 
    \end{aligned}
\]
and thus \eqref{Lindeberg} is true. 

To check \eqref{Lindeberg} $\implies$ \eqref{mulassump}, note that $L_{\nu_l}(F^l_{p},\gamma s_l)$ is bounded below by the sum of
\[
-\frac{1}{E_l^{k_l}} \sum_{\ul x \in E_l^{k_l}} \frac{1}{T_l}  \int_0^{T_l}  \Delta_{p}^l(\ul x, t)[\Delta_{p}^l(\ul x, t)+2 Q_l D_{m_l}(\ul x, p)] dt,
\] 
and 
\[
\frac{1}{E_l^{k_l}} \sum_{\ul x \in E_l^{k_l}}   \frac{1}{T_l}  \int_0^{T_l}Q_l^2 D_{m_l}(x_p)^2\mathbb{1}_{Z(\gamma s_l)}(g_t\pi_l(\ul x)) dt.
\]
As in the discussion above, we have
$$
    \begin{aligned}
    \lim_{l\rightarrow \infty}\frac{\sum_{p=1}^{k_l}L_{\nu_l}(F^l_{p},\gamma s_l)}{s_l^2} \geq \lim_{l\rightarrow \infty}\frac{\sum_{p=1}^{k_l} \int(Q_l Y_p)^2\mathbb{1}_{Z_l(\gamma s_l)}d \nu_l}{s_l^2}.
    \end{aligned}
$$
We also have $ |F^l_p  g_t (\pi_l (\ul x)) - \int F^l_p d \nu_l| \geq -|\Delta^l_p(\ul x, t)| + Q_l|Y_p(  g_t (\pi_l (\ul x))|$, which implies that
$
\{g_t (\pi_l (\ul x)) : |Y_p(g_t (\pi_l (\ul x))| \geq Q_l^{-1}(\gamma s_l + |\Delta^l_p(\ul x, t)|) \} \subset Z_l'(\gamma s_l).
$

Since $|\Delta^l_p(\ul x, t)|\leq \gamma s_l$ for all $\ul x\in E_l^{k_l}$ and $t\in [0,T_l]$ when $l$ is sufficiently large, we have
$
\{g_t (\pi_l (\ul x)) : |Y_p(g_t (\pi_l (\ul x))| \geq 2Q_l^{-1}\gamma s_l \} \subset Z_l'(\gamma s_l).
$
By \eqref{eqmain}, we have
$$
\{g_t (\pi_l (\ul x)) : |Y_p(g_t (\pi_l (\ul x))| \geq 4\gamma\sqrt{k_l}\sigma_l \}\subset Z_l'(\gamma s_l).
$$
Then following the same argument as in \eqref{eqQlYpupperbound}, we have
$$
\int (Q_l Y_p)^2 \mathbb{1}_{Z_l'(\gamma s_l)} d \nu_l\geq Q_l^2 L_{m_l}(F(\cdot , T_l),4\gamma \sigma_l  \sqrt{k_l}),
$$
which shows that
$$
    \begin{aligned}
    \lim_{l\rightarrow \infty}\frac{\sum_{p=1}^{k_l}L_{\nu_l}(F^l_{p},\gamma s_l)}{s_l^2} \geq \lim_{l\rightarrow \infty} \frac{L_{m_l}(F(\cdot , T_l),4\gamma \sigma_l  \sqrt{k_l})}{\sigma_l^2}.
    \end{aligned}
$$
This shows that \eqref{Lindeberg} implies \eqref{mulassump}. %As a result, the hypotheses \eqref{mulassump} and \eqref{Lindeberg} are equivalent. 
\end{proof}

\section{Verifying the Lindeberg condition} \label{Verifylind}

Historically, the Lindeberg  CLT is used in the case where an underlying probabilistic mixing structure is available (see condition (I) and (II) in \cite{ib62} for definitions of mixing and K-property in probability). In those situations, given any $L^1$ random variable $f$, to evaluate the distribution of a sum $S_nf$, one observes its partial sums $(S_{a_i}^{b_i}f)_{i\in \mathbb{N}}$, where $0=a_0<b_0<a_1<\cdots$. Due to the mixing assumptions on the system, one can expect $S_{a_i}^{b_i}f$ to behave `independently' for different $i\in \mathbb{N}$, if $a_{i+1}-b_i$, which is the gap between $i$-th and $i+1$-th segment, increases to $\infty$ uniformly for all $i\in \mathbb{N}$. To make $S_nf$ well-approximated by the sum over $S_{a_i}^{b_i}f$, it is natural to consider $b_i-a_i\gg a_{i+1}-b_i$ for all $i\in \mathbb{N}$ so that the effect from the gap is negligible. See Theorem 1.3 in \cite{ib62}. In particular, for $f$ with finite $2+\delta$ moments and $\sigma^2(S_nf)$ tending to infinity, the Lindeberg condition is satisfied. The mixing structure of the system allows one to argue that the Lindeberg variance distributed by each segment individually is sub-linear compared to the total variance, while mixing also implies the growth of total variance is (almost) linear. Therefore, the overall Lindeberg variance is negligible.   

In our situation, we do not have any strong mixing properties available for the measures $(\nu_l)$. However, each $\nu_l$ is weighted over concatenations of $k_l$ segments of (repeated) independent closed geodesics with (approximately) $T_l$ length, so one can study the global Lindeberg condition \eqref{Lindeberg} via the local condition \eqref{mulassump}. Intuitively, if we can make $k_l$ increase at an appropriate rate compared to $T_l$, eventually the Lindeberg variance contributed by individual terms becomes negligible, and thus the local condition \eqref{mulassump} is satisfied.

From now on, we strengthen condition \eqref{posvar} to the following 
\begin{equation}    \label{infvar}
    \lim_{l\rightarrow \infty}\sigma_l^2=\infty.
\end{equation} 
With this assumption, we can weaken the condition $k_l\delta_l^2 \downarrow 0$ in Hypothesis \ref{condstar} to
\begin{equation}    \label{newhypo}
    \frac{k_l\delta_l^2}{\sigma_l^2} \downarrow 0,
\end{equation}
and still obtain Theorem \ref{main}. This is because wherever the old condition $k_l\delta_l^2 \downarrow 0$ is applied, we are actually dealing with the limit of $k_l\delta_l^2/\sigma_l^2$ (see the last line in the proof of Lemma \ref{s comparison}, the fifth line of Lemma \ref{secondlemma}, \eqref{neglectM} in Lemma \ref{conclusionmuiffnu} and \eqref{setzl} in Lemma \ref{hypmuiffnu}). With the new assumption \eqref{newhypo}, we can allow $k_l$ to grow faster than before. If we can find $k_l$ which satisfies \eqref{newhypo} while simultaneously satisfying the hypothesis of the following lemma, we are done.
\begin{lemma} \label{hypoforlind}
Suppose that we have chosen $\delta_l$ and $T_l$, and our observable $f$, and that $\sigma_l \to \infty$. Suppose we can find $k_l \to \infty$ so that $\frac{ \sqrt k_l \sigma_l}{T_l} \to \infty$. Then the Lindeberg condition \eqref{mulassump} is satisfied.
\end{lemma}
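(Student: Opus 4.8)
The plan is to exploit the fact that $F(\cdot,T_l)$ is a uniformly bounded function, so that the Lindeberg function at truncation level $\gamma\sqrt{k_l}\sigma_l$ is \emph{identically zero} as soon as this level outgrows the (uniform) oscillation of $F(\cdot,T_l)$, and the hypothesis $\sqrt{k_l}\sigma_l/T_l\to\infty$ is exactly what forces this to happen for all large $l$.

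Concretely, I would set $\|f\| = \sup_{v\in T^1M}|f(v)|$, which is finite by compactness of $T^1M$. For every $x\in E_l$ we have $|F(x,T_l)| = \left|\int_0^{T_l} f(g_s x)\,ds\right| \le T_l\|f\|$, and the same bound holds for $\left|\int F(\cdot,T_l)\,dm_l\right|$; hence $\left|F(x,T_l) - \int F(\cdot,T_l)\,dm_l\right| \le 2T_l\|f\|$ for all $x\in E_l$. Therefore, whenever $c \ge 2T_l\|f\|$, the set $Z(c) = Z(c, F(\cdot,T_l), m_l)$ of Definition \ref{Lind} is empty, and so $L_{m_l}(F(\cdot,T_l),c) = 0$.

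It then remains only to check that the truncation level $c = \gamma\sqrt{k_l}\sigma_l$ lies in this range for all large $l$. If $\|f\| = 0$ there is nothing to prove, so assume $\|f\|>0$. For any fixed $\gamma>0$, the hypothesis $\sqrt{k_l}\sigma_l/T_l\to\infty$ gives $\gamma\sqrt{k_l}\sigma_l/(2T_l\|f\|)\to\infty$, so there is $l_0$ with $\gamma\sqrt{k_l}\sigma_l\ge 2T_l\|f\|$ for all $l\ge l_0$; by the previous step $L_{m_l}(F(\cdot,T_l),\gamma\sqrt{k_l}\sigma_l)=0$ for $l\ge l_0$, and since $\sigma_l\to\infty$ the ratio $L_{m_l}(F(\cdot,T_l),\gamma\sqrt{k_l}\sigma_l)/\sigma_l^2$ is eventually $0$, which is precisely \eqref{mulassump}. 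I do not expect any genuine obstacle in this argument: the only input is the trivial bound on a bounded observable. The substantive issue is instead whether such a $k_l$ can be chosen compatibly with the other growth requirements (Hypothesis \ref{condstar}, as weakened by \eqref{newhypo}), which is handled in the surrounding discussion rather than in this lemma.
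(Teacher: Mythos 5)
Your proof is correct and takes essentially the same approach as the paper: both rely on the trivial bound $\left|F(x,T_l)-\int F(\cdot,T_l)\,dm_l\right|\le 2T_l\|f\|$ and observe that the truncation level $\gamma\sqrt{k_l}\sigma_l$ eventually exceeds this bound (by the hypothesis $\sqrt{k_l}\sigma_l/T_l\to\infty$), so the Lindeberg integral is identically zero for all large $l$. Your phrasing via the emptiness of $Z(c)$ is slightly cleaner than the paper's indicator-function computation, but the argument is the same.
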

\begin{proof}
We consider the Lindeberg condition \eqref{mulassump}. For any fixed $\gamma>0$ and $v\in T^1M$, the indicator function in the integral satisfies
\begin{equation}   \label{indicator}
    \begin{aligned}
    \mathbb{1}_{|F(\cdot,T_l)-\int F(\cdot,T_l)dm_l|\geq \gamma\sqrt{k_l}\sigma_l}(v)
    \leq \mathbb{1}_{2T_l||f||\geq \gamma\sqrt{k_l}\sigma_l}(v)
    = \mathbb{1}_{K_{\gamma,f}\geq T_l^{-1}\sqrt k_l \sigma_l }(v)
    \end{aligned}
\end{equation}
where $K_{\gamma,f}:=2||f||\gamma^{-1}$ is a constant. Thus,
\begin{equation}    \label{verilind}
\begin{aligned}
&\lim_{l\rightarrow \infty}\frac{L_{m_l}(F(\cdot , T_l),\gamma \sqrt{k_l} \sigma_l )}{\sigma_l^2} \\
&=\lim_{l\rightarrow \infty}\frac{\int (F(\cdot,T_l)-\int F(\cdot,T_l)dm_l)^2\mathbb{1}_{|F(\cdot,T_l)-\int F(\cdot,T_l)dm_l|\geq \gamma\sqrt{k_l}\sigma_l} dm_l}{\sigma_l^2} \\
&\leq \lim_{l\rightarrow \infty}\frac{\int (F(\cdot,T_l)-\int F(\cdot,T_l)dm_l)^2\mathbb{1}_{K_{\gamma,f}\geq T_l^{-1}\sqrt k_l \sigma_l } dm_l}{\sigma_l^2} \\
&=0 \\
\end{aligned}
\end{equation}
which verifies Lindeberg condition \eqref{mulassump}
\end{proof}
Recall that we defined the (lower) dynamical variance for the sequence of measures $(m_l)$ to be
\begin{equation} \label{dynvarml}
 \underline \sigma^2_{\text{Dyn}}(f, (m_l)) = \liminf_{l \to \infty} \int \left (\frac{F(\cdot, T_l) - \int F( \cdot, T_l )d m_l}{\sqrt{T_l}} \right)^2 d m_l =  \liminf_{l \to \infty}\frac{\sigma_l^2}{T_l}
\end{equation}
See the introduction for a discussion of this quantity.
%In hyperbolic settings, the dynamical variance \eqref{dynvar} is positive when the observable is not cohomologous to a constant, which is a generic condition. This gives us intuition that \eqref{dynvarml} should reasonably be expected to be positive for generic observables, although such a theory, and its connection with cohomology, is out of reach of current techniques.

\begin{theorem} \label{automaticlindeberg}
Suppose that we have chosen $\delta_l$ and $T_l$, and our observable $f$. Suppose that $\underline \sigma^2_{\text{Dyn}}(f, (m_l))>0$. Then there exists sequences $k_l \to \infty$ and $C_l \to \infty$ so that the measures $(\nu_l)$ constructed from the data $(\delta_l, T_l, k_l, C_l)_{l \in \NN}$ are valid for Theorem \ref{main} to hold, and so that the Lindeberg condition \eqref{Lindeberg} holds.
\end{theorem}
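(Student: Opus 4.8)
The plan is to extract from the hypothesis a polynomial-in-$T_l$ lower bound for $\sigma_l^2$, and then choose $k_l$ and $C_l$ so that the resulting tuple satisfies the weakened Hypothesis~\ref{condstar} while also meeting the growth requirement of Lemma~\ref{hypoforlind}.

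First I would unpack \eqref{dynvarml}: the assumption $\underline\sigma^2_{\mathrm{Dyn}}(f,(m_l)) = \liminf_{l\to\infty}\sigma_l^2/T_l > 0$ means there is a constant $c>0$ with $\sigma_l^2 \geq c\,T_l$ for all large $l$. Since $T_l\to\infty$ this forces $\sigma_l^2\to\infty$, so condition \eqref{infvar} holds and the version of Hypothesis~\ref{condstar} with \eqref{newhypo} in place of $k_l\delta_l^2\downarrow 0$ is the one we must verify; moreover $\liminf_l\sigma_l > 0$, which is the standing assumption of both Lemma~\ref{hypmuiffnu} and Theorem~\ref{main}.

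Next I would choose $k_l$. It must be large enough that $\sqrt{k_l}\,\sigma_l/T_l\to\infty$ (so that Lemma~\ref{hypoforlind} applies) yet small enough that $k_l\delta_l^2/\sigma_l^2\to 0$ (condition \eqref{newhypo}). Using $\sigma_l^2\geq cT_l$, the first is guaranteed whenever $k_l/T_l\to\infty$, since $\sqrt{k_l}\,\sigma_l/T_l\geq\sqrt{c}\,\sqrt{k_l/T_l}$; and the second is guaranteed whenever $k_l = o(T_l/\delta_l^2)$, since $k_l\delta_l^2/\sigma_l^2\leq k_l\delta_l^2/(cT_l)$. Because $\delta_l\to 0$, the admissible window $T_l\ll k_l\ll T_l/\delta_l^2$ has multiplicative width $1/\delta_l^2\to\infty$, hence is nonempty for all large $l$; a concrete valid choice is $k_l := \lceil T_l/\delta_l\rceil$, for which $k_l/T_l\geq 1/\delta_l\to\infty$, $k_l\delta_l^2/\sigma_l^2\leq 2\delta_l/c\to 0$ (eventually), and $k_l\uparrow\infty$. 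With $k_l$ fixed, I would then pick $C_l\in\NN$ increasing quickly enough that $\sqrt{k_l}\,T_l/C_l\downarrow 0$, which in particular forces $C_l\to\infty$. At this stage the tuple $(\delta_l,T_l,k_l,C_l)$ satisfies the weakened Hypothesis~\ref{condstar}: parts 1) and 2) concerning $T_l$ hold by assumption on the given data, $k_l\uparrow\infty$, \eqref{newhypo} holds, and $\sqrt{k_l}\,T_l/C_l\downarrow 0$.

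Finally I would assemble the conclusion: by construction $\sqrt{k_l}\,\sigma_l/T_l\to\infty$, so Lemma~\ref{hypoforlind} yields the Lindeberg condition \eqref{mulassump} for $(m_l)$; since $\sigma_l\to\infty$, Lemma~\ref{hypmuiffnu} upgrades it to \eqref{Lindeberg} for $(\nu_l)$. As the measures $(\nu_l)$ built from $(\delta_l,T_l,k_l,C_l)$ are valid for Theorem~\ref{main} and \eqref{posvar} holds, Theorem~\ref{main} applies and gives the asymptotic CLT. The one point that needs care is that a single sequence $k_l$ must simultaneously meet the lower growth demand of Lemma~\ref{hypoforlind} and the upper growth demand of \eqref{newhypo}; this is exactly where the bound $\sigma_l^2\geq cT_l$ supplied by positive dynamical variance, together with $\delta_l\to 0$, provides the necessary room, and is the only nontrivial ingredient beyond the bookkeeping.
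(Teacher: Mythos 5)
Your proof is correct and takes essentially the same route as the paper: both extract from $\underline\sigma^2_{\mathrm{Dyn}}(f,(m_l))>0$ the bound $\sigma_l^2 \gtrsim T_l$ (hence $\sigma_l^2\to\infty$), invoke the weakened hypothesis \eqref{newhypo}, choose $k_l$ inside the admissible window so that both \eqref{newhypo} and the hypothesis of Lemma~\ref{hypoforlind} hold, and then finish via Lemmas~\ref{hypoforlind} and~\ref{hypmuiffnu}. The only cosmetic difference is the explicit choice of $k_l$: you take $k_l=\lceil T_l/\delta_l\rceil$ while the paper takes $k_l=\sigma_l^2/\delta_l$; under the standing hypothesis these are comparable and either works.
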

\begin{proof}
We let $k_l := \sigma_l^2/\delta_l$, which clearly tends to $\infty$. Observe that  $\frac{k_l\delta_l^2}{\sigma_l^2} = \delta_l \downarrow 0$, and thus \eqref{newhypo} is satisfied. Making any suitable choice of $C_l$, it follows from the discussion above that Theorem \ref{main} is valid for the measures $(\nu_l)$ constructed from the data $(\delta_l, T_l, k_l, C_l)_{l \in \NN}$.

Observe that from the hypothesis that $\underline \sigma^2_{\text{Dyn}}(f, (m_l))>0$, the sequence $\frac{\sigma_l}{T_l}$ is eventually greater than some constant $\alpha >0$, and thus we have 
\[
\frac{ \sqrt k_l \sigma_l}{T_l} \to \infty.
\]
Thus the hypothesis of Lemma \ref{hypoforlind} is satisfied, and we can conclude that the Lindeberg condition \eqref{mulassump} on $(m_l)$ holds. Thus, by Lemma \ref{hypmuiffnu}, the Lindeberg condition \eqref{Lindeberg} holds on $(\nu_l)$.
\end{proof}
Combining Theorem \ref{main} and Theorem \ref{automaticlindeberg} gives us Theorem \ref{intromain} as an immediate consequence.

\begin{remark}
One can investigate when the Lindeberg condition holds under the weaker condition that  $\lim_{l\rightarrow \infty}\sigma_l^2=\infty$ without assuming that $\underline \sigma^2_{\text{Dyn}}(f, (m_l))>0$. It can be verified that a suitable sequence $(k_l)$ satisfying Lemma \ref{hypoforlind} can be found if $\sigma_l^2/\delta_lT_l\to \infty$. To verify this condition, first recall from Hypothesis \ref{condstar} that the choice on $T_l$ is only determined by $\delta_l$. Thus, we need information on how $T_{\delta_l}$ is related to $\delta_l$ as $\delta_l \to 0$. This information can be extracted in the uniform case using symbolic dynamics, and the issue does not appear at all in discrete-time analogues of this result. While it may be possible to use this criterion to slightly sharpen our results in some concrete examples where the relationship between $\delta_l$ and $T_{\delta_l}$ is tractable, we do not pursue this at present.
\end{remark}

\section{Extensions of main result} \label{ES}
In this section, we extend our main result to dynamical arrays of observables. We also discuss how our techniques extend to equilibrium states and how they apply to other classes of dynamical system beyond geodesic flow.

\subsection{Dynamical Arrays}

A benefit of the Lindeberg approach is that we can consider dynamical arrays in the CLT instead of a single function. In this section, our setup is as follows. We let $(f_l)_{l\in \mathbb{N}}$ be a sequence of H\"older continuous observables. We allow for different H\"older constants and exponents, not necessarily bounded away from $\infty$ and $0$ respectively. We let $L_l$ and $\alpha_l$ be the H\"older constant and exponent respectively for $f_l$, so that $|f_l(x)-f_l(y)|\leq L_ld(x,y)^{\alpha_l}$ for all $l\in \mathbb{N}$.

Given  a sequence of 4-tuples $(T_l,k_l,\delta_l,C_l)_{l\in \mathbb{N}}$ to be chosen precisely later, and the sequence of observables $(f_l)$, we write $F_l(v,T_l):=\int_0^{T_l}f_l(g_t(v))dt$, and $F_{p,q}^l(v):=\int_{t_p+qT_l}^{t_p+(q+1)T_l}f_l(g_t(v))dt$. Using these modified definitions, new definitions for $\sigma_l^2$, $F_p^l$ and $s_l^2$ follow as in $\mathsection 3.1$. We have the following analogy to the statement of Lemma \ref{bestimate1}, with only minor modifications to the proof.
\begin{lemma}      \label{array bestimate1}
For $(f_l)_{l\in \mathbb{N}}$ given as above and $\ul x \in E_l^{k_l}$, $1\leq p \leq k_l$, we have
$$
|F^l_p (g_t(\pi_l(\ul x))) - Q_l F_l( x_{p}, T_l) | \leq 2 K_lT_l + (\kappa \eps +  2 \delta_l Q_l) \| f_l \|,
$$
where $K_l:=L_l\kappa\epsilon(1-e^{-\frac{\eta\alpha_l}{2}})^{-1}$.
\end{lemma}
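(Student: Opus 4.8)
The plan is to reproduce the proof of Lemma \ref{bestimate1} essentially verbatim, carrying the observable $f_l$ in place of $f$ at stage $l$ and tracking how its Hölder data $(L_l,\alpha_l)$ propagate through the four-term decomposition. The geometric ingredients are untouched: fixing $\ul x=(x_1,\dots,x_{k_l})\in E_l^{k_l}$ and $z=\pi_l(\ul x)$, the construction in the proof of Lemma \ref{refscale} still produces, for each $p$, a vector $u_p=u_p(\ul x)$ and a time shift $s_p=s_p(\ul x)\in[-\kappa\epsilon,\kappa\epsilon]$ with $g_s(u_p)\in W^s_{\kappa\epsilon}(g_{s+t_p}z)\cap W^{cu}_{\kappa\epsilon}(g_s(x_p))$ and $g_{s_p+s}(u_p)\in W^u_{\kappa\epsilon}(g_s(x_p))$ for all $s\in[0,(Q_l+1)T_l]$, since these statements depend only on $\eta$ and $\epsilon$ and not on the observable. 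One then bounds $|F^l_p(g_tz)-Q_lF_l(x_p,T_l)|$ by the same four terms as in the proof of Lemma \ref{bestimate1}.

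For the first term, the only change is that the Hölder inequality now reads $|f_l(a)-f_l(b)|\le L_l d(a,b)^{\alpha_l}$, so for each $0\le q\le Q_l-1$, using Lemma 3.10 of \cite{BCFT} exactly as before,
\[
|F^l_{p,q}(g_tz)-F_l(g_tu_p,[qT_l,(q+1)T_l])|\le L_lT_l\kappa\epsilon\, e^{-qT_l\eta\alpha_l/2}.
\]
Summing the geometric series in $q$ and using $T_l>1$ gives $\sum_{q=0}^{Q_l-1}L_lT_l\kappa\epsilon\, e^{-qT_l\eta\alpha_l/2}\le \frac{L_l\kappa\epsilon}{1-e^{-\eta\alpha_l/2}}T_l=K_lT_l$, which is precisely the definition of $K_l$. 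The third term is handled identically along the unstable direction, replacing the exponent $qT_l\eta\alpha_l/2$ by $(Q_l-1-q)T_l\eta\alpha_l/2$, and again sums to at most $K_lT_l$. The second and fourth terms involve no Hölder data: $|F_l(g_tu_p,Q_lT_l)-F_l(g_{t+s_p}u_p,Q_lT_l)|\le \|f_l\|\,|s_p|\le \kappa\epsilon\|f_l\|$, and $|F_l(g_tx_p,Q_lT_l)-Q_lF_l(x_p,T_l)|\le 2\delta_lQ_l\|f_l\|$ because $x_p$ has period in $[T_l-\delta_l,T_l]$. Adding the four bounds yields the stated inequality.

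There is essentially no obstacle here; the only point requiring a moment's care is checking that $K_l$ is finite and that the geometric-series bound $\frac{1}{1-e^{-T_l\eta\alpha_l/2}}\le\frac{1}{1-e^{-\eta\alpha_l/2}}$ survives even when $\alpha_l$ is small — it does, since $T_l>1$ forces $e^{-T_l\eta\alpha_l/2}\le e^{-\eta\alpha_l/2}<1$. The content of the lemma is simply that the error terms in the orbit-segment comparison are controlled by the Hölder data of the single observable $f_l$ at level $l$, exactly as in the fixed-observable case, with no interaction between levels.
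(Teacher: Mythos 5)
Your proof is correct and is exactly what the paper intends, since the paper's own proof of this lemma is simply the remark that one repeats the proof of Lemma~\ref{bestimate1} with $f_l$ in place of $f$; you carry this out explicitly, correctly tracking the Hölder data $(L_l,\alpha_l)$ through the four-term stable/shift/unstable/period decomposition and noting that the geometric ingredients ($u_p$, $s_p$) depend only on $\eta,\epsilon$ and not on the observable. Your observation that $T_l>1$ suffices for the geometric-series bound $1-e^{-T_l\eta\alpha_l/2}\ge 1-e^{-\eta\alpha_l/2}$ is also the right point to check, and in fact your version with $\alpha_l$ in the exponent of $K_l$ matches the paper's stated $K_l$ and cleanly fixes a small typo in the original constant $K$ of Lemma~\ref{bestimate1}, where the exponent $\alpha$ was dropped.
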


We need to modify our assumptions on the sequence of 4-tuples $(T_l,k_l,\delta_l,C_l)_{l\in \mathbb{N}}$. 

\begin{hypothesis} \label{arraytuple}
We choose sequences $T_l \in (0, \infty)$, $k_l \in \mathbb{N}$, $\delta_l \in (0, \delta_0)$, and $C_l \in \NN$  which satisfy the following relationships:

1)   For all $l\in \mathbb{N}$, $T_l>\max\{T_0(\delta_l,\eta),1\}$,

2) $T_l \uparrow \infty$, $\frac{T_l}{T_0(\delta_l,\eta)}\uparrow \infty$ and $k_l \uparrow \infty$,

3)  $k_l\delta_l^2\max\{ \| f_l \|,1\} \downarrow 0$,

4) $\frac{\sqrt{k_l}T_l\max\{|K_l|,1\}}{Q_l}\downarrow 0$ and  $\frac{\sqrt{k_l}T_l\max\{ \| f_l \|,1\}}{Q_l}\downarrow 0$.
\end{hypothesis}

It is always possible to have such sequence of 4-tuples as we can first choose $k_l$, then $\delta_l$ and $T_l$, finally $Q_l$. We will demonstrate why we choose $(T_l,k_l,\delta_l,C_l)$ this way below. We have the following analogy to Theorem \ref{main}:

\begin{theorem} \label{arraymain}
Fix $(f_l)_{l\in \mathbb{N}}$ as above. Let $(T_l,k_l,\delta_l,C_l)_{l\in \mathbb{N}}$ be a sequence satisfying Hypothesis \ref{arraytuple} and $(\nu_l)_{l\in \mathbb{N}}$ be the sequence of measures constructed as in $\mathsection 3$. Suppose $(f_l)_{l\in \mathbb{N}}$ satisfies
\begin{equation} \label{arrayposvar}
\begin{aligned}
\liminf_{l\rightarrow \infty}\sigma_l^2 > 0.
\end{aligned}
\end{equation}
Then the Lindeberg-type condition
\begin{equation} \label{arrayLindeberg}
\begin{aligned}
\lim_{l\rightarrow \infty}\frac{\sum_{1\leq p\leq k_l, }L_{\nu_l}(F^l_p,\gamma s_l)}{s_l^2}=0 
\end{aligned}
\end{equation}
 for any $\gamma>0$, implies that for all $a\in \mathbb{R}$, 
\begin{equation} \label{arrayLCLT}
\lim_{l\rightarrow \infty}\nu_l(\{v: \frac{F_l(v, k_l(C_lT_l+M)) - \int F_l(\cdot,k_l(C_lT_l+M))d \nu_l}{s_l}\leq a\})=N(a),
\end{equation}
 where $N$ is the cumulative distribution function of the normal distribution $\mathcal{N}(0,1)$. Conversely, under the hypotheses \eqref{arrayposvar}, \eqref{arrayLCLT} implies \eqref{arrayLindeberg}.
\end{theorem}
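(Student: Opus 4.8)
The plan is to reproduce the proof of Theorem \ref{main} almost verbatim, with the constant $K$ replaced by the $l$-dependent constant $K_l$ and the norm $\|f\|$ replaced by $\|f_l\|$ throughout, the point being that Hypothesis \ref{arraytuple} is designed precisely so that the (now $l$-dependent, possibly unbounded) error coefficients remain negligible. As in \S\ref{s.main}, the proof reduces to a chain of equivalences: \eqref{arrayLindeberg} is equivalent to the Lindeberg condition $\lim_{l\to\infty} L_{m_l}(F_l(\cdot,T_l),\gamma\sqrt{k_l}\sigma_l)/\sigma_l^2 = 0$ for all $\gamma>0$ on the uniform measures $(m_l)$; this is equivalent to the array analogue of the product-measure CLT \eqref{muclt}, namely $\lim_{l\to\infty}\mu_l(\{\ul x : (\sum_{i}F_l(x_i,T_l)-k_l\int F_l(\cdot,T_l)\,dm_l)/\sqrt{k_l\sigma_l^2}\le a\})=N(a)$; and this in turn is equivalent to \eqref{arrayLCLT}.

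The first equivalence, the array analogue of Theorem \ref{mainonmu}, requires no new idea. For each fixed $l$ the functions $\ul x\mapsto F_l(x_i,T_l)$, $1\le i\le k_l$, are independent and identically distributed under the product measure $\mu_l$, so they form a triangular array of row-wise independent random variables; the reduction to the classical Lindeberg CLT via \cite[Proposition 3.3]{DSZ18} goes through word for word, and the $l$-dependence of the observable is irrelevant. Asymptotic negligibility is automatic for the same reason as before, since $\sigma^2_{\mu_l}(\ul x\mapsto F_l(x_i,T_l))/\hat s_l^2 = 1/k_l\to 0$ where $\hat s_l^2 = k_l\sigma_l^2$, and the normalizations match as in the proof of Theorem \ref{mainonmu}.

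For the transfer to $(\nu_l)$, Lemma \ref{array bestimate1} is already the array version of Lemma \ref{bestimate1}, and the array versions of Lemma \ref{bestimate2} and Lemma \ref{bestimate3} follow in the same way, with $2KT_l+\kappa\epsilon\|f\|+2\delta_lQ_l\|f\|$ replaced by $2K_lT_l+\kappa\epsilon\|f_l\|+2\delta_lQ_l\|f_l\|$. The key point is the array version of Lemma \ref{keylemma}: the same computation gives
\[
\big|\sigma^2_{\nu_l}(F^l_p)-Q_l^2\sigma_l^2\big|\le 4\big(2K_lT_l+\kappa\epsilon\|f_l\|+2\delta_lQ_l\|f_l\|\big)^2+4Q_l\sigma_l\big(2K_lT_l+\kappa\epsilon\|f_l\|+2\delta_lQ_l\|f_l\|\big),
\]
and dividing by $Q_l^2\sigma_l^2$ the right-hand side tends to $0$ by conditions (3), (4) of Hypothesis \ref{arraytuple} together with $\liminf_l\sigma_l^2>0$ (using $\sqrt{k_l},T_l\ge 1$ and $Q_l\to\infty$, the contributions $K_lT_l/(Q_l\sigma_l)$, $\|f_l\|/(Q_l\sigma_l)$ and $\delta_l\|f_l\|/\sigma_l$ all vanish). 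With the analogue of Lemma \ref{keylemma} in hand, the analogues of Lemma \ref{keylemmacor}, Lemma \ref{s comparison}, Lemma \ref{secondlemma}, Lemma \ref{conclusionmuiffnu} and Lemma \ref{hypmuiffnu} follow verbatim, each invoking Hypothesis \ref{arraytuple} where Hypothesis \ref{condstar} was used. Chaining these equivalences yields the theorem, and the equivalence of \eqref{arrayLCLT} with the `elementary' normalization by $\sigma_{\nu_l}(F_l(\cdot,k_l(C_lT_l+M)))$ follows from the array analogue of Lemma \ref{s comparison}.

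The main obstacle is the bookkeeping: since $(f_l)$ is a fixed but arbitrary sequence of H\"older observables, $K_l$ and $\|f_l\|$ need not be bounded, so one must revisit every place in \S\ref{s.main} where Hypothesis \ref{condstar} was used and confirm that the correspondingly modified error term is still negligible under Hypothesis \ref{arraytuple}; this is exactly what dictates the form of conditions (3) and (4), including the factors $\max\{\|f_l\|,1\}$ and $\max\{|K_l|,1\}$. One also verifies that $4$-tuples satisfying Hypothesis \ref{arraytuple} exist: with $(f_l)$ fixed, the H\"older data $L_l,\alpha_l$, hence $K_l=\tfrac{L_l\kappa\epsilon}{1-e^{-\eta\alpha_l/2}}$, and $\|f_l\|$ are all determined in advance, so one chooses $k_l\uparrow\infty$ first, then $\delta_l\downarrow 0$ small enough for (3) and $T_l$ large enough for (1) and (2) (respecting $T_l>\max\{T_0(\delta_l,\eta),1\}$ and $T_l/T_0(\delta_l,\eta)\uparrow\infty$), and finally $C_l$, hence $Q_l$, large enough for (4).
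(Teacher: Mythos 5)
Your proposal matches the paper's own proof of Theorem \ref{arraymain} essentially line by line: the paper also reduces to the array version of Theorem \ref{mainonmu} via the same chain of equivalences (Lemma \ref{conclusionmuiffnu} and Lemma \ref{hypmuiffnu} analogues), replaces $K$ and $\|f\|$ by $K_l$ and $\|f_l\|$ in each estimate, and checks that Hypothesis \ref{arraytuple} (conditions (3), (4)) makes the resulting $l$-dependent error terms vanish, together with the same observation about the order in which one chooses $k_l$, $\delta_l$, $T_l$, $C_l$. No further comment is needed.
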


%Notice that all the notations above are adapted to the case of dynamical arrays. 
The proof follows the arguments of $\mathsection 4$, with $F$ replaced by $F_l$ and other notations referring to the array version of the definitions. We point out where the differences appear in the proofs between Theorem \ref{arraymain} and Theorem \ref{main}.

We inherit the definitions of $D_{m_l}(x)$, $D_{\nu_l}(\ul x,t;p)$ and $\Delta_{p}^l(\ul x,t)$ from $\mathsection 4$, which all adapt to the dynamical array setting. Observe that as a direct consequence of Lemma \ref{array bestimate1}, \eqref{deltapl} in Lemma \ref{bestimate3} now becomes
\begin{equation}    \label{array bestimate2}
    |\Delta_{p}^l(\ul x, t)|\leq 2(2K_lT_l+2\kappa \epsilon \| f_l \|+2\delta_l Q_l \| f_l \|).
\end{equation}

Therefore, to conclude the main lemma, which says that $\lim_{l\rightarrow \infty}\frac{\sigma^2_{\nu_l}(F^l_p)}{Q_l^2\sigma_l^2}=1$ uniformly in $1\leq p \leq k_l$, it suffices to show $\lim_{l\rightarrow \infty}\frac{2(2K_lT_l+\kappa \epsilon \| f_l \|+2\delta_l Q_l \| f_l \|)}{Q_l\sigma_l}=0$. This can be observed from the proof of Lemma \ref{keylemma}, using Hypothesis \ref{arraytuple} and \eqref{arrayposvar}. As a simple follow-up we have
\begin{equation} \label{arraymainlemma}
    \lim_{l\rightarrow \infty}\frac{s_l^2}{Q_l^2\sigma_l^2k_l}=1.
\end{equation}

To retrieve the content of Lemma \ref{secondlemma}, it suffices to show the last step of its proof holds true in the array case, which is that
\[
\lim_{l\rightarrow \infty}\left (\frac{2k_l\delta_l\| f_l \|}{\sqrt{k_l}\sigma_l}+\frac{(2M+4T_l)\sqrt{k_l}\| f_l \|}{Q_l\sigma_l} \right ) =0. 
\] 
This is obtained by applying condition 3) in Hypothesis \ref{arraytuple} to the first half, condition 4) to the second and applying \eqref{arrayposvar}.

To verify the equivalence between the CLT for $(\nu_l)$ and $(\mu_l)$, which is Lemma \ref{conclusionmuiffnu}, it suffices to replace \eqref{neglectM} by showing $ \frac{2k_l(2K_lT_l+\kappa \epsilon \| f_l \|+2\delta_lQ_l\| f_l \|)}{\sqrt{k_l}Q_l\sigma_l}< b$ for any $b>0$ when $l$ is sufficiently large. Finally, to verify the equivalence of the Lindeberg conditions, analogous to Lemma \ref{hypmuiffnu}, we invoke  \eqref{array bestimate2} and \eqref{arraymainlemma} along with Hypothesis \ref{arraytuple} and \eqref{arrayposvar}. As a result, we are able to conclude that Theorem \ref{arraymain} holds.

\subsection{Equilibrium States}  We refer the reader to \cite{BCFT} for definitions and notations. We consider a potential function $\varphi$ that is either H\"older continuous  or $q\varphi^u$ with $q<1$, where $\varphi^u$ is the geometric potential. We assume that the pressure gap condition $P(\text{Sing},\varphi)<P(\varphi)$ holds. Theorem A in \cite{BCFT} shows that the geodesic flow has a unique equilibrium state $\mu_{\varphi}$. Our main result, Theorem $\ref{main}$, extends to equilibrium states of this type. The generalization is a natural one. In place of the measures $(m_l)$, we use weighted measures  
\[
\hat m_l := \frac{1}{\sum_{v \in E_l} e^{\Phi(v, T_l)} } \sum_{v \in E_l} e^{\Phi(v, T_l)} \delta_v,
\]
and we define a weighted sequence of measures $(\hat \nu_l)$ analogously to our definition of $(\nu_l)$. We can show that $(\hat \nu_l)$ converges to $\mu_{\varphi}$, and that we have the analogue of Theorem $\ref{main}$: if the variance of an observable $f$ with respect to the sequence $(\hat m_l)$ is positive, we can ensure that the sequence $(\hat \nu_l)$ satisfies \eqref{LCLT}. The details of the statement and proof can be found in the PhD thesis of T. Wang \cite{tWthesis}.

\subsection{Systems with non-uniform specification} The reader will have observed that our arguments used dynamical structure proved in \cite{BCFT} rather than direct geometric arguments, and thus it is clear that the arguments of this paper will apply to a variety of systems other than the geodesic flow on non-positive curvature manifolds. We do not attempt to make an general statement abstracting the properties of the geodesic flow used in our analysis - a main point of course is the non-uniform specification structure obtained in \cite{BCFT}. The interested reader can infer from \S \ref{background}-\S \ref{s.main} exactly what properties are needed to obtain this Lindeberg-type CLT on periodic orbits for other systems. In \cite{CT19}, we defined $\lambda$-decompositions as an abstraction of the non-uniform structure enjoyed by rank one geodesic flows. Systems admitting this kind of structure are prime candidates for this kind of analysis. We note that our arguments are all given for flows, but could also be given in the simpler discrete-time case. In discrete-time, one advantage of our construction is that it extends easily from the MME case to equilibrium states.% - this was less clear in the construction of \cite{DSZ18}. 

\bibliographystyle{siam}
\bibliography{LindCLT}

\end{document}